\documentclass[11pt]{amsart}
\usepackage{amsmath}
\usepackage{amsfonts}
\usepackage{amsthm}
\usepackage{amssymb}
\usepackage{amscd}
\usepackage[all]{xy}
\usepackage{enumerate}
\usepackage{color}
\usepackage{bm}
\usepackage{amstext}


\keywords{hyperelliptic curve, theta characteristic, moduli of curves, Fano threefold} 

\subjclass{primary 14H10; 14E30; secondary 14J45; 14N05}

\pagestyle{myheadings}

\theoremstyle{plain}
\newtheorem{thm}{Theorem}[subsection]

\newtheorem{prop}[thm]{Proposition}

\newtheorem{cor}[thm]{Corollary}
\newtheorem{lem}[thm]{Lemma}

\theoremstyle{definition}
\newtheorem{defn}[thm]{Definition}

\newtheorem{gen}[thm]{Generality Condition}

\newtheorem{nota}[thm]{Notation}

\newtheorem*{ackn}{Acknowledgements}

\newtheorem{rem}[thm]{Remark}

\newcommand{\sE}{\mathcal{E}}
\newcommand{\sF}{\mathcal{F}}

\newcommand{\sH}{\mathcal{H}}
\newcommand{\sI}{\mathcal{I}}

\newcommand{\sL}{\mathcal{L}}

\newcommand{\sO}{\mathcal{O}}

\newcommand{\sR}{\mathcal{R}}
\newcommand{\sS}{\mathcal{S}}

\newcommand{\sU}{\mathcal{U}}

\newcommand{\mC}{\mathbb{C}}
\newcommand{\mF}{\mathbb{F}}

\newcommand{\mP}{\mathbb{P}}

\newcommand{\mZ}{\mathbb{Z}}

\newcommand{\Bs}{\mathrm{Bs}\,}

\newcommand{\Aut}{\mathrm{Aut}\,}

\newcommand{\Pic}{\mathrm{Pic}\,}

\newcommand{\PGL}{\mathrm{PGL}}
\newcommand{\Hspin}{\sS^{0,{\rm{hyp}}}_{g,1}}
\newcommand{\Pst}{(\mP^2)^*}
\newcommand{\tPst}{\widetilde{(\mP^2)^*}}
\newcommand{\fj}{{\sf{j}}}
\newcommand{\fm}{{\sf{m}}}
\newcommand{\tC}{{\empty^{\tau}\mC}}

\numberwithin{equation}{section}

\newenvironment{fcaption}{\begin{list}{}{
\setlength{\leftmargin}{35pt}
\setlength{\rightmargin}{35pt}
\setlength{\labelsep}{5pt}
}}{\end{list}}

\setcounter{section}{-1}



\author{Hiromichi Takagi}

\address{Graduate School of Mathematical Sciences \\
the University of Tokyo\\
Tokyo, 153-8914, Japan\newline
\texttt{takagi@ms.u-tokyo.ac.jp}}

\author{Francesco Zucconi}

\address{D.I.M.I. \\
the University of Udine\\
Udine, 33100, Italy\newline
\texttt{Francesco.Zucconi@dimi.uniud.it}}

\begin{document}

\begin{center}
\textbf{\Large
The rationality of the moduli space of \\
one-pointed ineffective spin hyperelliptic curves \\
\vspace{4pt}
via an almost del Pezzo threefold}
\par
\end{center}
{\Large \par}

$\;$

\begin{center}
Hiromichi Takagi and Francesco Zucconi 
\par\end{center}

\vspace{5pt}

$\;$

\begin{fcaption} {\small  \item 

Abstract. 
Using the geometry of an almost del Pezzo threefold,
we show that the moduli space $\sS^{0,{\rm{hyp}}}_{g,1}$ of genus $g$ one-pointed ineffective spin hyperelliptic curves is rational for every $g\geq 2$.
}\end{fcaption}

\vspace{0.5cm}


\markboth{Takagi and Zucconi}{Moduli of one-pointed ineffective spin hyperelliptic curves}



\section{Introduction}
Throughout this paper, we work over $\mC$, the complex number field.
The purpose of this paper is to show the following result:

\begin{thm}[=Theorem \ref{belloiperellitticorat}] 
\label{tratto}
The moduli space $\sS^{0,{\rm{hyp}}}_{g,1}$ 
of one-pointed genus $g$ hyperelliptic ineffective spin curves is an irreducible rational variety.
\end{thm}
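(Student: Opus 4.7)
The plan is to realize $\Hspin$ birationally as a parameter scheme of curves inside a suitable almost del Pezzo threefold $X$, and then deduce rationality from the geometry of $X$. A hyperelliptic curve $C$ of genus $g$ is a double cover $\pi\colon C\to \mP^1$ branched at $2g+2$ points; theta characteristics on $C$ are in bijection with subsets of this branch locus (modulo complementation), and ineffectivity together with parity are encoded combinatorially by the cardinalities of such subsets. Adjoining the marked point $p$, a dimension count gives $\dim \Hspin = 2g$, which should match the dimension of the relevant parameter component on $X$.

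The heart of the construction is to exhibit an almost del Pezzo threefold $X$ carrying (i) a conic-bundle or $\mP^1$-fibration structure whose restriction to a general curve $\Gamma\subset X$ reproduces the hyperelliptic morphism $\pi$, (ii) a half-canonical (or similar) divisor class naturally available on $X$ that cuts out the ineffective theta characteristic $\theta$ on $\Gamma\cong C$, and (iii) an ambient geometric feature (such as a hyperplane section, a fibre of a projection, or an intersection point with another distinguished subvariety) that plays the role of the marked point $p$ and rigidifies the $\PGL(2)$-action on the base of $\pi$. From this data one reads off a birational map from $\Hspin$ to an open subset of a Hilbert-type parameter scheme on $X$.

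Rationality is then completed by projecting this parameter scheme onto a manifestly rational base --- for instance an open subset of a projective space parameterizing a suitable pencil of surfaces on $X$, or of a linear system of divisors --- whose generic fibres are themselves open subsets of projective spaces and therefore rational. Irreducibility of $\Hspin$ will follow from the irreducibility of the ambient parameter component. The hard part will be the two-directional birational identification: one must translate the parity and ineffectivity condition on $\theta$ into a sharp cohomological statement about the chosen divisor class on $X$, verify that a general point of the parameter scheme determines a unique triple $(C,\theta,p)$ up to isomorphism, and dispose of any residual finite-group actions via explicit invariant theory or a no-name lemma.
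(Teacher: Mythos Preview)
Your proposal is a plan, not a proof: it correctly anticipates the paper's strategy in broad outline, but every substantive step is deferred to ``the hard part will be\ldots''. The paper carries out exactly the program you sketch, and the content lies entirely in the specifics you omit. Three concrete gaps:

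\textbf{(1) The threefold and the curves are not specified.} The paper works with a \emph{particular} almost del Pezzo threefold $B_a$, the small resolution of the unique one-nodal quintic del Pezzo threefold, realized as $\mP(\sE)$ for a specific stable bundle on $\mP^2$. The hyperelliptic curve $C_R$ is not a curve \emph{on} $B_a$ but the parameter space of $B_a$-lines meeting a fixed rational curve $R$ in a linear system $|(H+gL)_{|L_{\fm}}|$; the double cover $C_R\to R$ comes from the fact that through a general point of $B_a$ pass exactly two $B_a$-lines. The theta characteristic is $\nu^*\sO_{M_R}(1)\otimes\sO_{C_R}(-h_R-[\fj]_R)$, and its ineffectivity is checked by identifying the branch points lying over the two singular quadric fibers of the flopped model $B_b$. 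None of this is guessable from the abstract description ``half-canonical class on $X$''.

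\textbf{(2) Dominance and birationality are asserted, not proved.} The paper's Reconstruction Theorem takes a general $(C,p,\theta)$, uses the map $\varphi_{|\theta+g^1_2+p|}$ to embed $C$ birationally as a plane curve, matches the resulting configuration of lines with $(\ell_1,\ell_2,[\fj],[\fm])$ in $(\mP^2)^*$, and then shows by an intersection-number count that the plane curve equals $M_R$ for a suitable $R$. This is the genuine two-directional argument you flag as ``hard'', and it is not automatic.

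\textbf{(3) The final rationality is an explicit invariant computation.} After identifying $\Hspin$ birationally with $|(H+gL)_{|L_{\fm}}|/\Gamma$ for $\Gamma\simeq(\mZ_2\times G_a)\rtimes G_m$, the paper writes down coordinates and takes the quotient in three steps ($\mZ_2$, then $G_a$ via a slice, then $G_m$ giving a weighted projective space). Your phrase ``no-name lemma or explicit invariant theory'' is correct in spirit, but the actual group and its action have to be found first.

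In short: right architecture, no building.
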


We have the following immediate corollary:

\begin{cor}
The moduli space $\sS^{0,{\rm{hyp}}}_{g}$  of genus $g$ hyperelliptic ineffective spin curves is an irreducible unirational variety.
\end{cor}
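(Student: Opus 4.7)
The plan is to deduce this from Theorem \ref{tratto} via the natural forgetful morphism. Concretely, I would consider the morphism
\[
\pi\colon \sS^{0,{\rm{hyp}}}_{g,1}\longrightarrow \sS^{0,{\rm{hyp}}}_{g}
\]
that sends the isomorphism class of a one-pointed ineffective spin hyperelliptic curve $(C,\theta,p)$ to the class of the underlying spin curve $(C,\theta)$, forgetting the marked point. This morphism is well-defined at the level of coarse moduli spaces (or of stacks, with the usual automorphism caveats), and it is surjective: given any $(C,\theta)\in\sS^{0,{\rm{hyp}}}_{g}$, any choice of a point $p\in C$ produces a preimage $(C,\theta,p)\in\sS^{0,{\rm{hyp}}}_{g,1}$.

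From Theorem \ref{tratto}, the source $\sS^{0,{\rm{hyp}}}_{g,1}$ is irreducible and rational, hence in particular irreducible and unirational. Irreducibility of the image under a morphism from an irreducible variety gives irreducibility of $\sS^{0,{\rm{hyp}}}_{g}$ immediately. For unirationality, I would use the standard fact that if $X$ is unirational and $f\colon X\to Y$ is a dominant morphism of varieties, then $Y$ is unirational: any dominant rational map $\mP^N\dashrightarrow X$ composes with $f$ to a dominant rational map $\mP^N\dashrightarrow Y$. Applying this to $\pi$ (which is surjective, hence dominant) yields unirationality of $\sS^{0,{\rm{hyp}}}_{g}$.

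There is essentially no obstacle here beyond checking that $\pi$ is an honest morphism of (coarse) moduli spaces and is surjective, both of which are standard for forgetful maps between pointed moduli problems. The only subtle point is making sure that ``ineffective'' is preserved under forgetting the marked point, which is automatic since the condition $h^0(C,\theta)=0$ depends only on $(C,\theta)$. Consequently the corollary follows directly from Theorem \ref{tratto}.
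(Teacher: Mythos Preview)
Your proposal is correct and is precisely the argument the paper has in mind: the corollary is stated as an immediate consequence of Theorem~\ref{tratto}, and the paper itself uses the forgetful morphism $\sS^{0,{\rm{hyp}}}_{g,1}\to\sS^{0,{\rm{hyp}}}_{g}$ (see the proof of Corollary~\ref{belloiperellitticopuntato}) in exactly the way you do. There is nothing to add.
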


Now we give necessary definitions and notions to understand the statement of the above results.
We recall that a couple $(C,\theta)$ is called a {\it genus $g$ spin curve}
if $C$ is a genus $g$ curve and $\theta$ is a theta characteristic on $C$, namely, a half canonical divisor of $C$. If the linear system $|\theta|$ is empty, then $\theta$ is called an {\it ineffective} theta characteristic, and we also say
that such a {\it spin curve is ineffective}.
A {\it hyperelliptic spin curve} $(C, \theta)$ means that 
$C$ is hyperelliptic.
A pair of a spin curve $(C,\theta)$ and a point $p\in C$ is called 
a {\it one-pointed spin curve}.
One-pointed spin curves 
$(C,\theta, p)$ and
$(C',\theta', p')$ are said to be {\it isomorphic} to each other
if there exists an isomorphism $\xi\colon C \to C'$ such that
$\xi^*\theta'\simeq \theta$ and $\xi^*p'=p$.   
Finally, we denote by 
$\sS^{0,{\rm{hyp}}}_{g,1}$ (resp.~$\sS^{0,{\rm{hyp}}}_{g}$) the coarse moduli space of isomorphism classes of 
one-pointed genus $g$ hyperelliptic ineffective spin curves
(resp.~genus $g$ hyperelliptic ineffective spin curves).

Main motivations of our study are the rationalities of the moduli spaces of
hyperelliptic curves \cite{Bo} and of pointed hyperelliptic curves \cite{Ca}. 

One feature of the paper is that 
the above rationality is proved via the geometry of a certain smooth projective threefold. We developed such a method in our previous works \cite{TZ1, TZ2, TZ3}. In these works, we established the interplay
between 
\begin{itemize}
\item
even spin trigonal curves, where  even spin curve means that 
the considered theta characteristics have even-dimensional spaces
of global sections, and 
\item 
the quintic del Pezzo threefold $B$, which is known to be unique up to isomorphisms and is isomorphic to a codimension three linear section of $\mathrm{G}(2,5)$.
\end{itemize}

The relationship between curves and $3$-folds are a kind of mystery but 
many such relationships have been known to nowadays. A common philosophy of such works is
that a family of certain objects in a certain threefold is an algebraic curve
with some extra data.
In \cite[Cor.~4.1.1]{TZ1}, we showed that a genus $d-2$ trigonal curve appears 
as the family of lines on $B$ which intersect a fixed another rational curve of degree $d\geq 2$,
and, in \cite[Prop.~3.1.2]{TZ2}, we constructed a theta characteristic on the trigonal curve from
the incidence correspondence of intersecting lines on $B$. 
The mathematician who met first 
such an interplay is S.~Mukai, who discovered that lines on a genus twelve prime Fano threefold $V$ is parameterized by a genus three curve, and constructed 
a theta characteristic on the the genus three curve from
the incidence correspondence of intersecting lines on $V$ \cite{Mu2, Mukai12}. In our previous works \cite{TZ1,TZ2,TZ3},
we interpreted Mukai's work from the view point of the quintic del Pezzo threefold $B$ and generalized it. 

The study of this paper is directly related to 
our paper \cite{TZ3}, in which we showed that the moduli of even spin genus four curves is rational by using the above mentioned interplay. 

We are going to show our main result also by using such an interplay, but we replace
the quintic del Pezzo threefold by a certain degeneration of it. This is a new feature of this paper. The degeneration is a quintic del Pezzo threefold with one node, which is also
known to be unique up to isomorphisms and is isomorphic to a codimension three linear section of $\mathrm{G}(2,5)$ by \cite{Fu3}. Moreover, it is
not factorial at the node, and hence it admits two small resolutions, which we call $B_a$ and $B_b$ in this paper. Actually, we do not work on 
this singular threefold directly but work on small resolutions, mainly on $B_a$.
Along the above mentioned philosophy, we consider a family of `lowest degree' rational curves on $B_a$, which we call $B_a$-lines, intersecting a fixed another `higher degree' rational curve $R$. Then we show such $B_a$-lines are parameterized by a
hyperelliptic curve $C_R$, and 
we construct an ineffective theta characteristic $\theta_R$
on it from the incidence correspondence of intersecting $B_a$-lines.
Then we may reduce the rationality problem of the moduli 
to that of a certain quotient of family of rational curves on $B_a$
by the group acting on $B_a$, and 
solve the latter by computing invariants. 

Finally, we sketch the structure of the paper.  
In the section \ref{section:Ba}, we define a projective threefold $B_a$,
which is the key variety for our investigation of one-pointed ineffective
spin hyperelliptic curves. In this section, we also review several properties
of $B_a$.
In the section \ref{section:iperellittico}, we construct the above mentioned families of rational curves $R$ on $B_a$, and the family of $B_a$-lines.
Then, in the section \ref{Hyp},
we construct hyperelliptic curves $C_R$
as the parameter space of $B_a$-lines intersecting each fixed $R$.
In the section \ref{section:theta}, we construct
an ineffective theta characteristic $\theta_R$ on $C_R$
from the incidence correspondence of intersecting $B_a$-lines
parameterized by $C_R$. We also remark that
$C_R$ comes with a marked point from its construction. Finally in this section,
we interpret the moduli $\Hspin$ by a certain group quotient of the family of $R$. Then, in the section \ref{section:Rat}, we show the rationality of the latter by computing invariants.

\begin{ackn}
The authors thank Yuri Prokhorov for very useful conversations about the topic.
This research is supported by MIUR funds, 
PRIN project {\it Geometria delle variet\`a algebriche} (2010), coordinator A. Verra (F.Z.), and, by Grant-in Aid for Young Scientists (B 20740005, H.T.)
and by Grant-in-Aid for Scientific Research (C 16K05090, H.T.). 
\end{ackn}


\section{The key projective threefold $B_a$}
\label{section:Ba}
\subsection{Definition of $B_a$}
The key variety to show the rationality of $\Hspin$ is the threefold, which we denote by $B_a$ in this paper, with the following properties:
\begin{enumerate}[(1)]
\item
$B_a$ is a smooth {\it almost del Pezzo threefold}, which is, by definition, a smooth projective threefold with nef and big but non-ample anticanonical divisor divisible by $2$ in the Picard group. 
\item
If we write $-K_{B_a}=2M_{B_a}$, then $M^3_{B_a}=5$.
\item
$\rho(B_a)=2$.
\item
$B_a$ has two elementary contractions, one of which is the anticanonical
model $B_a\to B$ and it is a small contraction, and another is 
a $\mP^1$-bundle $\pi_a\colon B_a\to \mP^2$. 
\end{enumerate}

\subsection{Descriptions of $B_a$}
\label{sub:DesBa}
Many people met the threefold $B_a$ in several contexts. 
The first one is probably T.~Fujita.
In his classification of singular del Pezzo threefolds \cite{Fu3},
$B_a$ appears as a small resolution of the quintic del Pezzo threefold $B$.
Here we do not review Fujita's construction of $B_a$ in detail
except that we sum up his results as follows:
\begin{prop}
$B_a$ is unique up to isomorphism, and
the anti-canonical model $B_a\to B$ contracts a single smooth rational curve, say, $\gamma_a$ to a node of $B$.
 In particular the normal bundle of $\gamma_a$ is $\sO_{\mP^1}(-1)^{\oplus 2}$.
\end{prop}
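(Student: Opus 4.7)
The plan is to deduce the proposition from Fujita's classification of singular del Pezzo threefolds \cite{Fu3} together with the standard local description of a threefold node. First I would invoke \cite{Fu3} to conclude that, up to isomorphism, there is a unique non-factorial Gorenstein del Pezzo threefold $B$ of degree five whose only singular point is a node; this fixes the target of the anticanonical morphism. Since $B_a$ is a small resolution of this $B$ (the listed properties force the anticanonical morphism of $B_a$ to be crepant, birational, and not an isomorphism, with image of degree $5$), $B_a$ is determined up to isomorphism once one selects one of the (at most) two small resolutions of $B$; the $\mP^1$-bundle structure $\pi_a\colon B_a\to\mP^2$ from property (4) then pins down that choice, giving the asserted uniqueness.

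Next I would read off the exceptional locus from the analytic local model at the node. A node on a threefold is analytically equivalent to the hypersurface germ $\{xy-zw=0\}\subset\mA^4$, i.e.\ the affine cone over $\mP^1\times\mP^1$. This germ admits exactly two small resolutions (the two sides of the Atiyah flop), each obtained by blowing up one of the two non-Cartier Weil divisors $\{x=z=0\}$ or $\{x=w=0\}$ through the vertex. A direct chart computation in either case shows that the exceptional set is a single smooth rational curve with normal bundle $\sO_{\mP^1}(-1)\oplus\sO_{\mP^1}(-1)$. Because $B$ has a single singular point and $B_a\to B$ is small, the global exceptional locus of $B_a\to B$ is identified with this local exceptional $\mP^1$; we denote it $\gamma_a$, and it has the claimed normal bundle.

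The only genuine obstacle is Fujita's uniqueness theorem for $B$, which is a real classification result and must be imported from \cite{Fu3}. Once that is granted, everything else reduces to the well-known Atiyah-flop picture at a single node, and no further global argument on $B_a$ itself is required.
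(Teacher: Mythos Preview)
Your proposal is correct and matches the paper's treatment: the paper does not give a proof at all, but simply presents the proposition as a summary of Fujita's results in \cite{Fu3}. Your added discussion of the local Atiyah-flop model at the node is standard and supplies the detail the paper omits; the only substantive input in either case is Fujita's classification, exactly as you identify.
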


Fujita treats $B_a$ less directly, so descriptions of $B_a$ 
by \cite{L}, \cite{JPR05}, \cite{Take} and \cite{Hu}, which we review below, are more convenient for our purpose.

By \cite[\S 3]{L} and \cite[Thm.~3.6]{JPR05}, we may write $B_a\simeq \mP(\sE)$ with
a stable rank two bundle $\sE$ on $\mP^2$ with $c_1(\sE)=-1$ and $c_2(\sE)=2$
fitting in the following exact sequence:
\begin{equation}
\label{eq:E}
0\to \sO(-3)\to \sO(-1)^{\oplus 2}\oplus \sO(-2)\to \sE\to 0.
\end{equation}
Let $H_{\sE}$ be the tautological divisor for $\sE$ and $L$ the $\pi_a$-pull back of a line in $\mP^2$.
By the canonical bundle formula for projective bundle,
we may write $-K_{B_a}=2H_{\sE}+4L$.
Therefore, by the definition of $M_{B_a}$, 
we see that $M_{B_a}$ is the tautological line bundle associated to $\sE(2)$.

Generally,
let $\sF$ be a stable bundle on $\mP^2$ with $c_1(\sF)=-1$.
In \cite{Hu}, Hulek studies jumping lines 
for such an $\sF$, where a line $\fj$ on $\mP^2$ is called a {\it jumping line} for $\sF$ 
if $\sF_{|{\fj}}\not \simeq \sO_{\mP^1}\oplus \sO_{\mP^1}(-1)$. We also recall that
a line $\sf{l}$ on $\mP^2$ is called a {\it jumping line of
the second kind} for $\sF$ if $h^0(\sF_{|{2\sf{l}}})\not =0$.
In [ibid.~Thm.~3.2.2], it is shown that
the locus $C(\sF)$ in the dual projective plane $\Pst$
parameterizing jumping lines of the second kind
is a curve of degree $2(c_2(\sF)-1)$.
Therefore, in our case, $C(\sE)$ is a conic. Moreover
the following properties of $\sE$ hold by [ibid.]:

\begin{prop}
\label{prop:jump}
\begin{enumerate}[$(1)$]
\item $\sE$ is unique up to an automorphism of $\mP^2$,
\item $C(\sE)\subset \Pst$ is a line pair, 
which we denote by $\ell_1\cup \ell_2$, 
\item $\sE$ has a unique jumping line $\subset \mP^2$, which we denote by $\fj$,
and the point $[\fj]$ in the dual projective plane $\Pst$ is equal to 
$\ell_1\cap \ell_2$, and
\item
$\sE_{|{\fj}}\simeq \sO_{\mP^1}(-2)\oplus \sO_{\mP^1}(1)$. 
\end{enumerate}
\end{prop}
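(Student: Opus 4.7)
The plan is to extract the four claims from Hulek's general theory applied to the explicit resolution (\ref{eq:E}). Since $c_2(\sE)=2$, the theorem of Hulek cited above already gives that $C(\sE)$ is a plane curve of degree $2(c_2(\sE)-1)=2$, hence a conic; so $C(\sE)$ is either smooth, a reduced line pair, or a double line. The task is to distinguish these while simultaneously identifying the honest jumping lines.

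First I would compute the first-kind jumping locus directly from (\ref{eq:E}). Writing the injection $\sO(-3)\hookrightarrow \sO(-1)^{\oplus 2}\oplus \sO(-2)$ as a triple $(q_1,q_2,\ell)$ with $q_i\in H^0(\sO_{\mP^2}(2))$ and $\ell\in H^0(\sO_{\mP^2}(1))$, and restricting the resolution to a line $L\subset \mP^2$, the splitting type of $\sE_{|L}$ is controlled by the behaviour of $\ell_{|L}$. Whenever $\ell_{|L}\ne 0$ the cokernel is the generic $\sO_{\mP^1}\oplus \sO_{\mP^1}(-1)$, while on the single line $\fj=\{\ell=0\}$ the summand $\sO_{\fj}(-2)$ injects into $\sE_{|\fj}$, and a degree comparison then forces $\sE_{|\fj}\simeq \sO_{\mP^1}(-2)\oplus \sO_{\mP^1}(1)$. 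This establishes (3) and (4). Now by Hulek's correspondence between honest jumping lines and singular points of $C(\sE)$, the conic $C(\sE)$ has exactly one singular point, so is either a reduced line pair or a double line; the double-line case is incompatible with the maximal jump $\sO(-2)\oplus \sO(1)$ at $\fj$ in Hulek's list, yielding the line pair of (2), with the node $\ell_1\cap\ell_2$ necessarily equal to $[\fj]$.

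For the uniqueness statement (1), the main obstacle, I would combine a parameter count with an explicit $\PGL_3$-normalization. The datum $(q_1,q_2,\ell)$ varies in a $15$-dimensional vector space; the automorphism group of $\sO(-1)^{\oplus 2}\oplus \sO(-2)$ acts with effective dimension $4+1+6=11$, where the $6$ comes from two copies of $\Hom(\sO(-2),\sO(-1))=H^0(\sO_{\mP^2}(1))$, while the $\mC^*=\Aut(\sO(-3))$ is absorbed by the diagonal scalar, leaving a $4$-dimensional family of bundles --- matching the expected $4c_2(\sE)-c_1(\sE)^2-3=4$ for stable rank two bundles with these Chern classes. One may then use $\PGL_3$ to move $\fj$ to a fixed coordinate line (normalizing $\ell$), and the residual stabilizer together with the gauge action to reduce $(q_1,q_2)$ to a standard form. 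Verifying that this normalization terminates at a single $\PGL_3$-orbit rather than finitely many is the delicate point; for this I would appeal directly to Hulek's classification of stable bundles with $c_1=-1$, $c_2=2$, which is the ultimate source of uniqueness.
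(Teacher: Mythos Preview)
Your proposal is correct and in fact more explicit than the paper's own proof, which consists of two bare citations: parts (1)--(3) are attributed to Hulek's Prop.~8.2 and part (4) to Hulek's Prop.~9.1, with no further argument given.

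Your derivation of (3) and (4) directly from the resolution (\ref{eq:E}) is a genuine addition. Restricting to a line $L$ and reading the splitting type from whether $\ell|_L$ vanishes is exactly right: when $\ell|_L=0$ the map factors through $\sO_{\mP^1}(-1)^{\oplus 2}$, so $\sE|_L$ splits off $\sO_{\mP^1}(-2)$ with complementary quotient $\sO_{\mP^1}(1)$; when $\ell|_L\neq 0$ one checks $h^0(\sE|_L)=1$ from the long exact sequence, forcing the generic splitting. This recovers (3) and (4) without Hulek's general machinery. Your route to (2) via the identification of first-kind jumping lines with $\Sing C(\sE)$ is again Hulek's, but unpacked rather than merely cited. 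One small sharpening: the exclusion of the double line already follows from that identification together with the uniqueness of $\fj$ you have just established --- a double line has one-dimensional singular locus, hence would force a one-parameter family of jumping lines --- so you need not invoke the jump type or a separate ``list''. For (1), your parameter count is correct ($15-11=4$, matching $4c_2(\sE)-c_1(\sE)^2-3$), and you rightly flag that the count alone cannot distinguish a single $\PGL_3$-orbit from finitely many; deferring to Hulek's classification at that point is precisely what the paper does.
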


\begin{proof}
(1)--(3) follow from [ibid.~Prop.~8.2], 
and (4) follows from [ibid.~Prop.~9.1].
\end{proof}

\begin{nota}
For a line $\fm\subset \mP^2$, we set $L_{\fm}:=\pi_a^{-1}(\fm)\subset B_a$.
We denote by $C_0(\fm)$ the negative section of $L_{\fm}$.
\end{nota}

Here we can interpret the jumping line of $\sE$ 
by the birational geometry of $B_a$
as follows:

\begin{cor}
\label{cor:flop}
The $\pi_a$-image on $\mP^{2}$ of the exceptional curve $\gamma_a$ of $B_a\to B$
is the jumping line $\fj$.
\end{cor}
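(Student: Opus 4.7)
The plan is to identify the flopping curve $\gamma_a$ with the negative section $C_0(\fj)$ of the ruled surface $L_\fj=\pi_a^{-1}(\fj)$. Since, by the preceding proposition, $\gamma_a$ is the unique curve contracted by the anticanonical morphism $B_a\to B$, and this morphism is defined by $|M_{B_a}|$, it suffices to produce a smooth rational curve $C\subset L_\fj$ satisfying $M_{B_a}\cdot C=0$ with $\pi_a(C)=\fj$: such a $C$ must coincide with $\gamma_a$, and the claim follows at once.

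The natural candidate is $C=C_0(\fj)$. By Proposition \ref{prop:jump}(4), $\sE|_\fj\simeq \sO_{\mP^1}(-2)\oplus \sO_{\mP^1}(1)$, so $L_\fj\simeq \mF_3$, and the negative section $C_0(\fj)$ maps isomorphically onto $\fj$; in particular $L\cdot C_0(\fj)=L\cdot \fj=1$. To compute $H_\sE\cdot C_0(\fj)$, I would use that in Grothendieck's convention for $\mP(\sE)$ a section of $L_\fj\to \fj$ corresponds to a rank one quotient of $\sE|_\fj$, and $C_0(\fj)$ is the one associated to the quotient of smallest degree, namely $\sE|_\fj\twoheadrightarrow \sO_{\mP^1}(-2)$ with kernel $\sO_{\mP^1}(1)$. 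Pulling back $H_\sE$ along this section yields $\sO_{\mP^1}(-2)$, whence $H_\sE\cdot C_0(\fj)=-2$, and therefore
\[
M_{B_a}\cdot C_0(\fj)=(H_\sE+2L)\cdot C_0(\fj)=-2+2=0.
\]

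Combined with the uniqueness of the contracted curve, this forces $C_0(\fj)=\gamma_a$, and hence $\pi_a(\gamma_a)=\pi_a(C_0(\fj))=\fj$. The only delicate point I anticipate is the sign/convention bookkeeping in the computation of $H_\sE\cdot C_0(\fj)$; as a sanity check one may verify $C_0(\fj)^2=-3$ inside $L_\fj\simeq \mF_3$ via the standard formula $N_\sigma=L\otimes K^{-1}$ for the section of a projectivised rank two bundle arising from a quotient $\sE|_\fj\twoheadrightarrow L$ with kernel $K$, which here gives $N_{C_0(\fj)/L_\fj}\simeq \sO_{\mP^1}(-2)\otimes \sO_{\mP^1}(-1)=\sO_{\mP^1}(-3)$.
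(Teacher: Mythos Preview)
Your proof is correct and follows essentially the same approach as the paper: both arguments exploit the uniqueness of the contracted curve $\gamma_a$, compute $H_\sE\cdot C_0(\fj)=-2$ from Proposition~\ref{prop:jump}(4), and conclude that $C_0(\fj)$ is trivial against $M_{B_a}$ (equivalently, against $-K_{B_a}=2M_{B_a}$). The paper phrases the final intersection computation in terms of $-K_{B_a}=2H_\sE+4L$ rather than $M_{B_a}=H_\sE+2L$, but this is the same calculation; your additional remarks on conventions and the sanity check for $C_0(\fj)^2=-3$ are not needed but do no harm.
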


\begin{proof}
By the uniqueness of $\gamma_a$, we have only to show that
the negative section $C_0(\fj)$ of $L_{\fj}$ is numerically trivial for $-K_{B_a}$.
By Proposition \ref{prop:jump} (4), we have $H_{\sE}\cdot C_0(\fj)=-2$.
Therefore, since $-K_{B_a}=2H_{\sE}+4L_{\fj}$, we have
$-K_{B_a}\cdot C_0(\fj)=2\times (-2)+4 =0$.
\end{proof}

\subsection{Two-ray link}    
By \cite[Thm.~3.5 and 3.6]{JPR05} and \cite[Thm.~2.3]{Take},
a part of the birational geometry of $B_a$ is described by the following two-ray link: 
\begin{equation}
\label{eq:ODP}
\xymatrix{
& B_a \ar[dl]_{\pi_{a}}\ar[dr]  &\dashrightarrow & 
 B_b \ar[dl] \ar[dr]^{\pi_{b}} & \\
 \mP^{2}  &  & B & & \mP^1, }
\end{equation}
where

\begin{enumerate}[(i)]
\item
$B_a\dashrightarrow B_b$ is the flop of a single smooth rational curve $\gamma_a$.

\item
$\pi_{b}$ is a quadric bundle.
\item
Let $L$ be the pull-back of a line by $\pi_{a}$, and $H$ a fiber of 
$\pi_{b}$.
Then 
\begin{equation}
\label{eq:twice}
-K=2(H+L), 
\end{equation}
where we consider this equality both on $B_a$ and $B_b$, and $-K$ 
denotes both of the anti-canonical divisors.
\end{enumerate}

\begin{nota}
\begin{enumerate}[(1)]
\item
We denote by $\gamma_a$ and $\gamma_b$ the flopping curves on $B_a$ and $B_b$,
respectively.
\item
It is important to notice that there exist exactly two singular $\pi_b$-fibers,
which are isomorphic to the quadric cone
(this follows from the calculation of the topological Euler number of $B_a$
and invariance of Euler number under flop).
We denote them by $F_1$ and $F_2$.
\end{enumerate}
\end{nota}

Though we mainly work on $B_a$,
the threefold $B_b$ is also useful to understand 
the properties of $B_a$ related to the jumping lines of the second kind
since the definition of such jumping lines is less geometric (see the subsection \ref{sub:Bb}).

\subsection{Group action on $B_a$}
In this subsection, we show that $B_a$ has a natural action
by the subgroup of $\Aut \Pst$ fixing $\ell_1 \cup \ell_2$ . 
This fact should be known for experts but we do not know appropriate literatures.

Our way to see this is based on the elementary transformation of the $\mP^2$-bundle $\pi_a\colon B_a\to \mP^2$ centered at the flopping curve $\gamma_a$.
This make it possible to describe the group action quite explicitly.
\begin{prop}
\label{prop:elm}
Let $\mu\colon \widetilde{B}_a\to B_a$ be the blow-up along the flopping curve
$\gamma_a$. Let $\nu\colon \widetilde{B}_a\to B_c$ be the blow down over $\mP^2$ contracting the strict transform of $L_{\fj}=\pi_a^{-1}(\fj)$ to a smooth rational curve $\gamma_c$ $($the existence of the blow down follows from Mori theory in a standard way$)$.
Then $B_c\simeq \mP^1\times \mP^2$. Moreover, $\gamma_c$ is a divisor of type $(1,2)$ in $\mP^1\times \fj$.
\begin{equation}
\label{eq:elm}
\xymatrix{& \widetilde{B}_a\ar[dl]_{\mu}\ar[dr]^{\nu} &\\
B_a\ar[dr]_{\pi_a}  &  & B_c\ar[dl]\\ 
 & \mP^{2} &  }
\end{equation}
\end{prop}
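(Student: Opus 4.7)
Plan: The birational modification $\mu\colon \widetilde{B}_a\to B_a$ followed by $\nu\colon \widetilde{B}_a \to B_c$ is an elementary transformation of the $\mP^1$-bundle $\pi_a\colon B_a = \mP(\sE)\to \mP^2$ centered along the section $\gamma_a$ over $\fj$. By the general theory of elementary transformations for $\mP^1$-bundles, the output is another $\mP^1$-bundle $\pi_c\colon B_c = \mP(\sE')\to \mP^2$ where $\sE'$ fits in a short exact sequence
\[
0\to \sE' \to \sE \to i_*\sL \to 0
\]
with $i\colon\fj\hookrightarrow\mP^2$ and $\sL$ the line bundle on $\fj$ corresponding to the section $\gamma_a$. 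I would first identify $\sL$ using the normal bundle sequence $0\to N_{\gamma_a/L_\fj}\to N_{\gamma_a/B_a}\to N_{L_\fj/B_a}|_{\gamma_a}\to 0$: since $N_{\gamma_a/B_a}=\sO(-1)^{\oplus 2}$ and $N_{L_\fj/B_a}|_{\gamma_a}\simeq \sO(1)$, the left term equals $\sO(-3)$, so $\gamma_a$ is the negative section of $L_\fj\simeq \mF_3 = \mP(\sE|_\fj)$, and correspondingly $\sL=\sO_\fj(-2)$.

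Next, I would show $B_c\simeq \mP^1\times\mP^2$. A local computation gives that $\sE'$ is locally free of rank two, and the Chern class calculation (using $c(i_*\sO_\fj(-2))=1+h+3h^2$ and $c(\sE)=1-h+2h^2$) yields $c_1(\sE')=-2$, $c_2(\sE')=1$, hence $\sE'(1)$ has $c_1=c_2=0$. Twisting the defining sequence by $\sO(1)$ and using $H^\bullet(\sO_\fj(-1))=0$ gives $h^0(\sE'(1))=h^0(\sE(1))=2$, where the latter is computed from~(\ref{eq:E}). A rank two bundle on $\mP^2$ with $c_1=c_2=0$ and two global sections without common zero is necessarily trivial, and the common-zero-free condition can be verified via the explicit map $\sO^{\oplus 2}\oplus\sO(-1)\to \sE(1)$ induced by~(\ref{eq:E}). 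Thus $\sE'(1)\simeq \sO^{\oplus 2}$, so $B_c\simeq \mP^1\times\mP^2$.

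Finally, for the bidegree of $\gamma_c$, the relation $\pi_c\circ\nu=\pi_a\circ\mu$ implies $\gamma_c=\nu(\widetilde{L}_\fj)$ lies in $\pi_c^{-1}(\fj)\simeq\mP^1\times\fj$. The strict transform $\widetilde{L}_\fj\simeq L_\fj\simeq\mF_3$ is contracted by $\nu$ along its ruling (each ruling fiber has normal bundle $\sO(-1)\oplus\sO$ in $\widetilde{B}_a$), so the negative section of $\widetilde{L}_\fj$ maps isomorphically onto $\gamma_c$, giving degree one for $\gamma_c\to\fj$. To compute the degree of the other projection $\gamma_c\to\mP^1$, I would identify $\nu^*H_1=\mu^*(H_\sE+L)-E$ (with $H_1$ the bidegree-$(1,0)$ class on $B_c$ and $E$ the exceptional divisor of $\mu$), using the fact that every section of $|H_\sE+L|$ on $B_a$ vanishes along $\gamma_a$ (since $(H_\sE+L)|_{\gamma_a}=\sO(-1)$ has no sections). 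Using $H_\sE\cdot\gamma_a=-2$, $L\cdot\gamma_a=1$, and the intersection number $-3$ coming from restricting $\sO(E)|_E$ to the class of the negative section of $\widetilde{L}_\fj$ in $E\simeq\mP^1\times\mP^1$, one gets $\gamma_c\cdot H_1 = -2+1+3 = 2$, confirming bidegree $(1,2)$.

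The main obstacle is tracking the sign conventions of the elementary transformation and verifying that $\sE'(1)$ with $c_1=c_2=0$ is truly the trivial bundle rather than some other bundle with vanishing Chern classes; the latter reduces to an explicit common-zero-free check for the two global sections of $\sE'(1)$ coming from~(\ref{eq:E}).
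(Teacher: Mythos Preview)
Your approach is correct and substantially more informative than the paper's, which simply reads ``This follows from \cite[p.166, (si111o) Case (a)]{Fu3}.'' You actually carry out the elementary transformation computation that Fujita's classification encodes.

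A few remarks on the one point you flag as the main obstacle. The cleanest way to pin down $\sE'(1)$ is not via a common-zero check but via semistability. From the defining sequence $0\to\sE'\to\sE\to i_*\sO_\fj(-2)\to 0$ one gets $H^0(\sE')\hookrightarrow H^0(\sE)=0$ (the latter by~(\ref{eq:E})). Hence $\sE'(1)$ admits no sub-line-bundle $\sO_{\mP^2}(k)$ with $k\geq 1$, so it is semistable with $c_1=0$. Any nonzero section $s\in H^0(\sE'(1))$ then has zero locus of pure codimension two (a divisorial part would contradict semistability), and $[Z(s)]=c_2(\sE'(1))=0$ forces $Z(s)=\emptyset$. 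Thus $s$ trivializes a rank-one summand and $\sE'(1)\simeq\sO^{\oplus 2}$. This avoids tracking the explicit sections through~(\ref{eq:E}).

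Your bidegree computation is also fine; the identity $\nu^*H_1=\mu^*(H_\sE+L)-E$ can be read off directly from matching the canonical classes $\mu^*K_{B_a}+E=K_{\widetilde{B}_a}=\nu^*K_{B_c}+\widetilde{L}_\fj$ together with $\widetilde{L}_\fj=\mu^*L-E$. One small wording issue: ``two global sections without common zero'' is weaker than what you need (they could be proportional everywhere yet jointly nonvanishing); what you actually use is that $s_1\wedge s_2\in H^0(\det\sE'(1))=H^0(\sO_{\mP^2})$ is a nonzero constant, and the semistability argument above delivers this.

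In short: the paper outsources the result to Fujita's classification, whereas you give a self-contained bundle-theoretic proof. Your route has the advantage of making the identification $B_c\simeq\mP^1\times\mP^2$ and the class of $\gamma_c$ completely explicit, which is exactly what the subsequent Corollary~\ref{cor:elm} exploits.
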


\begin{proof}
This follows from \cite[p.166, (si111o) Case (a)]{Fu3}.
\end{proof}


Let $(x_1:x_2)$ be a coordinate of $\mP^1$ and 
$(y_1:y_2:y_3)$ be a coordinate of $\mP^2$.
By a coordinate change, 
we may assume that $\fj=\{y_3=0\}\subset \mP^2$ and 
the two ramification points of 
$\gamma_c\hookrightarrow \mP^1\times \mP^2\overset{p_1}{\to} \mP^1$ are 
$(0:1)\times (1:0:0)$ and $(1:0)\times (0:1:0)$.
Then $\gamma_c=\{\alpha x_1 y_1^2+\beta x_2 y_2^2=y_3=0\}$
with $\alpha\beta\not=0$. By a further coordinate change, 
we may assume that 
\begin{equation}
\label{gammaC}
\gamma_c=\{x_1 y_1^2+x_2 y_2^2=y_3=0\}.
\end{equation}

Let us denote by $G$ the automorphism group of $B_a$.
Now we can easily obtain the following description of $G$ from Proposition \ref{prop:elm}.
For this, we denote by $G_m\simeq \mC^*$ the multiplicative group and
by $G_a\simeq \mC$ the additive group.
\begin{cor}
\label{cor:elm}
The automorphism group $G$ of $B_a$ is isomorphic to 
the subgroup of the automorphism group of $B_c$ which preserves $\gamma_c$.
Explicitly, let an element
$(A, B)\in \PGL_2\times \PGL_3$ acts on $B_c\simeq \mP^1\times \mP^2$
as $({\bf x},{\bf y})\mapsto (A{\bf x}, B{\bf y})$ by matrix multiplication.
If $(A,B)$ preserve $\gamma_c$ with the equation $(\ref{gammaC})$ as above, then $(A,B)$ is of the form
\begin{enumerate}[$(\rm i)$]
\item $A={\scriptsize \begin{pmatrix} a_1^2 & 0\\ 0 & 1\end{pmatrix}},\,
B={\scriptsize \begin{pmatrix} 1& 0 & b_1\\
0 & a_1 & b_2 \\
0 & 0 & a_2 
\end{pmatrix}}$,
or
\item
$A={\scriptsize \begin{pmatrix} 0 & a_1^2 \\ 1 & 0\end{pmatrix}},\,
B={\scriptsize \begin{pmatrix} 0& 1 & b_1\\
a_1 & 0 & b_2 \\
0 & 0 & a_2 
\end{pmatrix}},$
\end{enumerate}
where $a_1, a_2\in G_m$ and $b_1,b_2\in G_a$ in both cases.

In particular, 
the $G$-orbit of $(1:1)\times (0:0:1)$ in $\mP^1\times \mP^2$ is open.
Therefore, the action of $G$ on $B_c$ is, and hence the one on $B_a$ is quasi-homogeneous. 
\end{cor}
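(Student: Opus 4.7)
The plan is to identify $G = \Aut B_a$ with the stabilizer of $\gamma_c$ in $\Aut B_c = \PGL_2\times\PGL_3$ via the elementary transformation of Proposition \ref{prop:elm}, and then to compute this stabilizer directly from the equation (\ref{gammaC}) of $\gamma_c$.

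First I would show that the diagram (\ref{eq:elm}) is $G$-equivariant. Any $\phi\in\Aut B_a$ preserves the flopping curve $\gamma_a$, since it is the unique curve contracted by the anticanonical morphism $B_a\to B$. Moreover, as $\rho(B_a)=2$, the only two elementary contractions of $B_a$ are $\pi_a$ and $B_a\to B$, and these are distinguished by the dimensions of their targets, so $\phi$ descends along $\pi_a$ to an automorphism $\bar\phi$ of $\mP^2$. By Proposition \ref{prop:jump}(3) and Corollary \ref{cor:flop}, $\bar\phi$ preserves the unique jumping line $\fj$, hence $\phi$ preserves $L_\fj$. Consequently $\phi$ lifts to an automorphism $\tilde\phi$ of $\widetilde{B}_a$ preserving the strict transform of $L_\fj$; therefore $\tilde\phi$ descends through $\nu$ to an automorphism of $B_c$ preserving $\gamma_c$. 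The reverse implication is symmetric: any automorphism of $B_c$ preserving $\gamma_c$ lifts through $\nu$ (whose exceptional center is $\gamma_c$) and descends through $\mu$.

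Second I would compute the stabilizer of $\gamma_c$ in $\PGL_2\times\PGL_3$ directly. Since $\gamma_c$ has bidegree $(1,2)$ on $\mP^1\times\fj$, its image under the second projection spans the line $\fj=\{y_3=0\}$, so $B$ must preserve this hyperplane, which forces the lower-left $2\times 1$ block of $B$ to vanish. The first projection $\gamma_c\to\mP^1$ is a double cover ramified precisely at $(1:0)\times(0:1:0)$ and $(0:1)\times(1:0:0)$, so any stabilizer element must permute these two ramification points: this splits the stabilizer according to whether $A$ is diagonal or anti-diagonal, yielding the two cases (i) and (ii). In each case, demanding that the form $x_1y_1^2+x_2y_2^2$ be transformed into a scalar multiple of itself pins down the relevant entries of $B|_\fj$ up to the single parameter $a_1$, while $b_1, b_2, a_2$ remain free.

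Finally, for quasi-homogeneity I would simply apply family (i) to $(1:1)\times(0:0:1)$: the image is $(a_1^2:1)\times(b_1:b_2:a_2)$, which sweeps out the open subset $\{x_2\neq 0,\,y_3\neq 0\}\subset B_c$ as $(a_1,a_2,b_1,b_2)$ ranges over $G_m^2\times G_a^2$, and this open set pulls back through the birational equivalence to an open $G$-orbit in $B_a$. The main obstacle will be step one, and in particular the assertion that $\phi\in\Aut B_a$ descends along $\pi_a$; everything else is essentially linear algebra once the equivariance of diagram (\ref{eq:elm}) is in hand.
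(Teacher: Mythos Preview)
Your proposal is correct and follows precisely the approach the paper intends: the authors give no proof beyond the sentence ``Now we can easily obtain the following description of $G$ from Proposition \ref{prop:elm}'', so what you have written is exactly the argument they leave implicit. The equivariance of the elementary transformation (via uniqueness of $\gamma_a$, of the contraction $\pi_a$, and of the jumping line $\fj$), the explicit stabilizer computation from the ramification of $\gamma_c\to\mP^1$, and the orbit check are all sound.
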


It is also easy and is convenient to write down the $G$-action on the base $\mP^2$.

\begin{cor}
\label{cor:gr}
\begin{enumerate}[$(1)$]
\item
The projective plane $\mP^2$ consists of the following three orbits of $G:$
\[
\mP^2=G\cdot (0:0:1)\sqcup G\cdot (1:1:0)\sqcup \{(1:0:0)\sqcup (0:1:0)\},
\]
where $G\cdot (0:0:1)$ is the open orbit, 
$G\cdot (1:1:0)$ is an open subset of the jumping line $\fj:=\{y_3=0\}$,
and the two points $(1:0:0), (0:1:0)\in \fj$ form one orbit and correspond 
to the lines $\ell_1$ and $\ell_2$ by projective duality.
\item
The dual projective plane $\Pst$ has the following three orbits of $G$ by the contragredient action of $G:$\[
\Pst=G\cdot (1:1:0)\sqcup \{G\cdot (1:0:0)\sqcup G\cdot (0:1:0)\}\sqcup (0:0:1),
\]
where 
$G\cdot (1:1:0)$ is the open orbit, 
the closures of $G\cdot (1:0:0)$ and $G\cdot (0:1:0)$
are the two lines $\ell_1$ and $\ell_2$.
\end{enumerate}

\end{cor}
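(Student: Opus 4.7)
The plan is to derive both orbit decompositions by a direct computation from the explicit description of $G$ given in Corollary \ref{cor:elm}. The action on the base $\mP^2$ of $\pi_a$ is through the matrix factor $B$, and the induced action on $\Pst$ is the contragredient one $c \mapsto c\,B^{-1}$.

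For part $(1)$, I would first observe that in both types $(\rm i)$ and $(\rm ii)$ the last row of $B$ is $(0,0,a_2)$, so the line $\fj = \{y_3 = 0\}$ is $G$-stable, and the orbit analysis splits into $\mP^2 \setminus \fj$ and $\fj$ separately. On $\mP^2 \setminus \fj$, normalizing $y_3 = 1$, a type $(\rm i)$ element with suitable $b_1, b_2$ sends any $(y_1 : y_2 : 1)$ to $(0:0:1)$, so this is the open orbit. On $\fj$, a point other than $(1:0:0)$ or $(0:1:0)$ has the form $(1 : t : 0)$ with $t \neq 0$, and the type $(\rm i)$ element with $a_1 = t$ sends it to $(1:1:0)$. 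Finally, the two remaining points are fixed by every type $(\rm i)$ element but are interchanged by any type $(\rm ii)$ element, so together they constitute a single orbit of cardinality~$2$.

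For part $(2)$, I would first use projective duality to identify which points of $\mP^2$ correspond to the lines $\ell_1, \ell_2 \subset \Pst$. Since $C(\sE) = \ell_1 \cup \ell_2$ is canonically determined by $\sE$, it is $G$-invariant; together with the $G$-fixedness of $[\fj] = \ell_1 \cap \ell_2$, this forces the unordered pair $\{\ell_1, \ell_2\}$ to be $G$-stable. Each $\ell_i \subset \Pst$ is a pencil of lines through a point $p_i \in \mP^2$, and the incidence $[\fj] \in \ell_i$ translates to $p_i \in \fj$. Hence $\{p_1, p_2\}$ is a $G$-invariant pair on $\fj$, and part $(1)$ forces $\{p_1, p_2\} = \{(1:0:0), (0:1:0)\}$. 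Writing $\ell_1 = \{c_1 = 0\}$ and $\ell_2 = \{c_2 = 0\}$ in dual coordinates, the orbit decomposition of $\Pst$ then follows by a calculation parallel to the one in part $(1)$: the point $[\fj] = (0:0:1)$ is $G$-fixed; type $(\rm i)$ preserves each $\ell_i$ and acts transitively on $\ell_i \setminus \{[\fj]\}$; type $(\rm ii)$ swaps $\ell_1$ and $\ell_2$, merging the two into one orbit; and on the complement $\Pst \setminus (\ell_1 \cup \ell_2)$ any point can be moved to $(1:1:0)$ by a suitable type $(\rm i)$ element.

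The proof is essentially bookkeeping in $\PGL_3$ once Corollary \ref{cor:elm} is in hand, so I do not expect a serious obstacle. The one step that is not a mechanical matrix manipulation is the duality argument that pins down the two points of $\mP^2$ corresponding to $\ell_1, \ell_2$; this is needed because $C(\sE)$ is defined abstractly through jumping lines of the second kind, while all the group-theoretic data is read off from the explicit coordinates on $B_c \simeq \mP^1 \times \mP^2$.
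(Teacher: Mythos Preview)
Your proposal is correct and follows essentially the same route as the paper: both treat the orbit decompositions of $\mP^2$ and $\Pst$ as direct calculations from the explicit matrices in Corollary~\ref{cor:elm}, and both single out the identification of $\{(1{:}0{:}0),(0{:}1{:}0)\}$ with the duals of $\ell_1,\ell_2$ as the one step requiring an argument beyond matrix arithmetic. The only difference is in that step: the paper uses the identity component $G_0$, observing that the dual points of $\ell_1,\ell_2$ must be $G_0$-fixed (since $G_0$, being connected, cannot swap the $\ell_i$) and that $G_0$ has exactly the two fixed points $(1{:}0{:}0),(0{:}1{:}0)$ on $\mP^2$; you instead use the full $G$, arguing that $\{p_1,p_2\}$ is a $G$-invariant two-element subset of $\fj$ and then invoking the orbit structure already established in part~(1). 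Both arguments are valid and essentially equivalent.
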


\begin{proof}
We only show that 
the two points $(1:0:0), (0:1:0)\in \fj$ correspond 
to the lines $\ell_1$ and $\ell_2$ by projective duality.
This follows from the orbit decomposition of $\mP^2$
by the identity component $G_0$ of $G$ 
since the two points $\in \mP^2$ corresponding to
the lines $\ell_1$ and $\ell_2$ are fixed by $G_0$, and 
$G_0$ has only two fixed points.
\end{proof}

In the section \ref{section:Rat},
a central role is played by the following explicit description of the action of $G$ on $B_a$
preserving $L_{\fm}$ for a general $\fm$. By quasi-homogenousity of the action on $B_a$, we may assume that $\fm=\{y_1=y_2\}$.
 
\begin{lem}\label{lem:fixLm}
An element $(A, B)\in \PGL_2\times \PGL_3$ of $G$ preserves $L_{\fm}$, equivalently, preserves $\fm$ if and only if 
$(A,B)$ is of the form
\[
(a) \ A={\scriptsize \begin{pmatrix} 1 & 0\\ 0 & 1\end{pmatrix}},\,
B={\scriptsize \begin{pmatrix} 1& 0 & b_1\\
0 & 1 & b_1 \\
0 & 0 & a_2 
\end{pmatrix}},
\]
or
\[
(b)\ A={\scriptsize \begin{pmatrix} 0 & 1 \\ 1 & 0\end{pmatrix}},\,
B={\scriptsize \begin{pmatrix} 0& 1 & b_1\\
1 & 0 & b_1 \\
0 & 0 & a_2 
\end{pmatrix}},
\]
where $a_2\in G_m$ and $b_1\in G_a$ in both cases.

In particular, such elements form a subgroup $\Gamma \simeq (\mZ_2\times G_a)\rtimes G_m$ and $\Gamma$ is generated by the following three type elements\,$:$
\begin{itemize}
\item $G_m:$
{\scriptsize{$\begin{pmatrix} 1 & 0\\ 0 & 1\end{pmatrix}\times
\begin{pmatrix} 1& 0 & 0\\
0 & 1 & 0\\
0 & 0 & a 
\end{pmatrix}$}}
with $a\in G_m$,
\item $G_a:$
{\scriptsize $\begin{pmatrix} 1 & 0\\ 0 & 1\end{pmatrix}\times
\begin{pmatrix} 1& 0 & b\\
0 & 1 & b\\
0 & 0 & 1 
\end{pmatrix}$}
with $b\in G_a$, and
\item $\mZ_2:$
\scriptsize{$\begin{pmatrix} 0& 1\\ 1 & 0\end{pmatrix}\times
\begin{pmatrix} 0& 1 & 0\\
1 & 0 & 0\\
0 & 0 & 1 
\end{pmatrix}.$}

\end{itemize}
\end{lem}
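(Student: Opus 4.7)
The plan is to reduce the problem to a direct matrix calculation on top of Corollary~\ref{cor:elm}. The key observation is that the $G$-action on $B_a$ arises, via the elementary transformation of Proposition~\ref{prop:elm}, from the $(A,B) \in \PGL_2 \times \PGL_3$-action on $B_c = \mP^1 \times \mP^2$, and both $\pi_a \colon B_a \to \mP^2$ and the second projection $B_c \to \mP^2$ are identified with the common projection from $\widetilde{B}_a$ to $\mP^2$. In particular $\pi_a$ is $G$-equivariant with the induced action on the base given by the factor $B \in \PGL_3$. Consequently an element $(A,B) \in G$ preserves $L_{\fm} = \pi_a^{-1}(\fm)$ if and only if $B$ preserves the line $\fm = \{y_1 = y_2\} \subset \mP^2$, which is the condition I would impose.

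The next step is purely computational: apply a general $B$ of form (i) or (ii) of Corollary~\ref{cor:elm} to the generic point $(s:s:t) \in \fm$. In both cases the image is $(s + b_1 t : a_1 s + b_2 t : a_2 t)$, and the requirement that this lies on $\{y_1 = y_2\}$ for all $(s,t)$ gives the two linear relations $a_1 = 1$ and $b_1 = b_2$. Substituting these back into the forms (i) and (ii) of Corollary~\ref{cor:elm} yields precisely the two families (a) and (b) in the statement, which automatically form a subgroup $\Gamma \leq G$ since the condition of preserving $L_{\fm}$ is closed under composition and inversion.

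It remains to identify the group structure and to verify the list of generators. Let $\sigma$ denote the involution in family (b) obtained by setting $a_2 = 1,\, b_1 = 0$; let $h_a$ and $u_b$ denote the displayed $G_m$- and $G_a$-elements of family (a). A short multiplication shows $\sigma^2 = 1$, every element of (a) equals $h_{a_2} u_{b_1}$, and every element of (b) equals $\sigma h_{a_2} u_{b_1}$, so these three elements generate $\Gamma$. A similarly short conjugation calculation gives $[\sigma, h_a] = [\sigma, u_b] = 1$ and $h_a u_b h_a^{-1} = u_{b/a}$, which together yield the semidirect-product decomposition $\Gamma \simeq (\mZ_2 \times G_a) \rtimes G_m$ stated in the lemma. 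The whole argument is essentially bookkeeping once Corollary~\ref{cor:elm} is in hand; the only point that is not mechanical is the $G$-equivariance of $\pi_a$, which is however immediate from the construction in Proposition~\ref{prop:elm} and Corollary~\ref{cor:elm}.
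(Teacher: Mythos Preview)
Your proof is correct and is exactly the direct matrix computation the paper intends; in fact the paper states Lemma~\ref{lem:fixLm} without proof, leaving it as an immediate consequence of Corollary~\ref{cor:elm}, so your argument simply fills in that omitted verification. The only comment is that your generator and commutator checks are routine but welcome, since the paper does not spell them out either.
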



\section{Families of rational curves on $B_a$}
\label{section:iperellittico}
In this section, we construct families of rational curves on $B_a$,
which will ties the geometries of $B_a$ and one-pointed ineffective spin hyperelliptic curves. We start by some preliminary discussions.

\begin{lem}
\label{lem:neg}
If a line $\fm$ is not equal to the jumping line $\fj$, then $(H-L)_{|{L_{\fm}}}$ is linearly equivalent to the negative section $C_0({\fm})$ of $L_{\fm}\simeq \mF_1$.
If $\fm=\fj$, then $(H-L)_{|{L_{\fm}}}$ is 
linearly equivalent to the negative section $C_0(\fj)$ plus a ruling of $L_{\fj}\simeq \mF_3$.
\end{lem}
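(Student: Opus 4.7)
The plan is to apply adjunction on $L_\fm$, using the relation $-K_{B_a} = 2(H+L)$ from the two-ray link, after first identifying $L_\fm$ as the correct Hirzebruch surface via the splitting of $\sE|_\fm$.

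First, I would determine the splitting type of $\sE|_\fm$. Since $c_1(\sE) = -1$, the restriction to any line has $c_1 = -1$, so splits as $\sO(a) \oplus \sO(b)$ with $a + b = -1$. When $\fm \neq \fj$, the definition of jumping line gives $\sE|_\fm \simeq \sO_\fm \oplus \sO_\fm(-1)$, so $L_\fm = \mP(\sE|_\fm) \simeq \mF_1$; when $\fm = \fj$, Proposition \ref{prop:jump}(4) gives $\sE|_\fj \simeq \sO_\fj(-2) \oplus \sO_\fj(1)$, so $L_\fj \simeq \mF_3$. Let $C_0 = C_0(\fm)$ and $f$ denote the negative section and a fiber of the $\mP^1$-bundle $L_\fm \to \fm$.

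Next, I would carry out the adjunction computation. Because $L$ and $L_\fm$ are both pullbacks of lines in $\mP^2$, $L \sim L_\fm$ on $B_a$, and hence $L_\fm|_{L_\fm} = L|_{L_\fm} = f$. Adjunction then reads
\[
K_{L_\fm} = (K_{B_a} + L_\fm)|_{L_\fm} = -2(H+L)|_{L_\fm} + f = -2 H|_{L_\fm} - f.
\]
Comparing with the standard formula $K_{\mF_n} = -2 C_0 - (n+2)f$ determines $H|_{L_\fm} = C_0 + \tfrac{n+1}{2} f$, which is $C_0 + f$ for $n=1$ and $C_0(\fj) + 2f$ for $n=3$. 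Subtracting $L|_{L_\fm} = f$ yields $(H-L)|_{L_\fm} \sim C_0$ when $\fm \neq \fj$ and $(H-L)|_{L_\fj} \sim C_0(\fj) + f$, as claimed.

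There is no substantive obstacle; the argument is essentially bookkeeping once the Hirzebruch surface is correctly identified. The only point that requires attention is the splitting type of $\sE|_\fm$, which is precisely the input distinguishing the two cases $\fm \neq \fj$ and $\fm = \fj$ via Proposition \ref{prop:jump}.
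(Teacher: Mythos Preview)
Your proof is correct, but it takes a different route from the paper's. The paper observes directly that $H-L$ is the tautological divisor $H_{\sE}$ associated to $\sE$ (since $M_{B_a}=H+L$ is tautological for $\sE(2)$), and then reads off $(H-L)|_{L_{\fm}}$ from the splitting type of $\sE|_{\fm}$ using the standard description of the tautological class on $\mP(\sO(a)\oplus\sO(b))$. You instead run adjunction on $L_{\fm}$ against the known canonical class $K_{\mF_n}=-2C_0-(n+2)f$ to solve for $H|_{L_{\fm}}$. Both arguments are short and rest on the same identification of $L_{\fm}$ as $\mF_1$ or $\mF_3$; the paper's version has the advantage of explaining what $H-L$ \emph{is} globally (the tautological class for $\sE$), which is used again later, while your adjunction computation is self-contained and avoids tracking the tautological normalization. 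One small remark: your step ``$2H|_{L_{\fm}}=2C_0+(n+1)f$ implies $H|_{L_{\fm}}=C_0+\tfrac{n+1}{2}f$'' is legitimate because $\Pic(\mF_n)$ is torsion-free, but it is worth saying so explicitly.
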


\begin{proof}
As we mention in the subsection \ref{sub:DesBa},
$M_{B_a}$ is the tautological line bundle on $B_a$ associated to the bundle $\sE(2)$. Therefore, by (\ref{eq:twice}), $H-L=M_{B_a}-2L$ is
the tautological line bundle associated to the bundle $\sE$.
If $\fm$ is not equal to the jumping line $\fj$, then $\sE_{|{\fm}}\simeq \sO_{\mP^1}\oplus \sO_{\mP^1}(-1)$ and hence $(H-L)_{|{L_{\fm}}}$ is linearly equivalent to the negative section $C_0({\fm})$ of $L_{\fm}\simeq \mF_1$.
If $\fm=\fj$, then $\sE_{|{\fm}}\simeq \sO_{\mP^1}(-2)\oplus \sO_{\mP^1}(1)$ and hence $(H-L)_{|{L_{\fm}}}$ is 
linearly equivalent to the negative section $C_0(\fj)$ plus a ruling of $L_{\fj}\simeq \mF_3$.
\end{proof}

By this lemma, it is easy to show the following proposition:

\begin{prop}\label{razional 1}
Let $\fm\subset \mP^2$ be a line and $g\geq -1$ an integer.
If $\fm\not =\fj$ $($resp.~$\fm=\fj$ and $g\geq 1)$, 
then a general element $R$ of the linear system $|(H+gL)_{|L_{\fm}}|$
 is 
a smooth rational curve with $H\cdot R=g+1$ and $L\cdot R=1$.
Moreover,
if $\fm\not =\fj$ and $g\geq 0$ $($resp.~$\fm=\fj$ and $g\geq 1)$, 
then $|(H+gL)_{|L_{\fm}}|$ has no base point.
\end{prop}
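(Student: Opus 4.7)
The plan is to reduce the entire statement to well-known facts about linear systems on Hirzebruch surfaces, using Lemma \ref{lem:neg} to identify the relevant divisor class.

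First, I would combine Lemma \ref{lem:neg} with the obvious fact that $L_{|L_\fm}$ is a fiber of the ruling $\pi_{a|L_\fm}\colon L_\fm\to \fm$, which I denote by $f$. This lets me write $H_{|L_\fm}$ explicitly in $\Pic(L_\fm)$. For $\fm\neq \fj$ (so $L_\fm\simeq \mF_1$ with negative section $C_0=C_0(\fm)$), I obtain $H_{|L_\fm}\sim C_0+f$, and hence $(H+gL)_{|L_\fm}\sim C_0+(g+1)f$. For $\fm=\fj$ (so $L_\fj\simeq \mF_3$), I obtain $H_{|L_\fj}\sim C_0(\fj)+2f$, hence $(H+gL)_{|L_\fj}\sim C_0(\fj)+(g+2)f$.

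Second, a direct intersection calculation on each Hirzebruch surface confirms the claimed numerical data. Writing $R\sim C_0+kf$ with $k=g+1$ in the first case and $k=g+2$ in the second (and $C_0^2=-1$ or $-3$ respectively), one reads off $L\cdot R=f\cdot R=1$ and, in both cases, $H\cdot R=g+1$. The adjunction formula on $\mF_n$, using $K_{\mF_n}=-2C_0-(n+2)f$, gives $p_a(R)=0$ in both cases, so any smooth member of the linear system is a smooth rational curve with the asserted intersection numbers.

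Third, the base-point-freeness assertion follows from the standard criterion on $\mF_n$: the complete linear system $|aC_0+bf|$ is base-point-free if and only if $a\geq 0$ and $b\geq na$. This yields $g+1\geq 1$ in the first case (i.e.\ $g\geq 0$) and $g+2\geq 3$ in the second case (i.e.\ $g\geq 1$), matching the hypotheses of the proposition. Under these hypotheses, Bertini's theorem then yields smoothness of a general element, completing the proof of both the smooth-rationality and base-point-freeness parts. For the remaining corner $\fm\neq\fj$ and $g=-1$, one checks directly that $|C_0|$ consists of the single smooth rational curve $C_0(\fm)$ itself, which takes care of this boundary case of the first claim.

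Main obstacle: there is essentially none; the whole argument is routine once Lemma \ref{lem:neg} has been used to pin down the class of $H_{|L_\fm}$. The only mild bookkeeping point is the jumping-line case, where the surface is $\mF_3$ rather than $\mF_1$ and the threshold for base-point-freeness shifts from $g\geq 0$ to $g\geq 1$, which is precisely why the statement of the proposition splits into the two parallel cases.
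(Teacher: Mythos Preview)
Your proposal is correct and is exactly the argument the paper has in mind: the paper simply states that the proposition follows easily from Lemma~\ref{lem:neg}, and you have spelled out precisely that computation on $\mF_1$ and $\mF_3$. There is no alternative route being taken; you have merely filled in the standard Hirzebruch-surface details that the paper leaves to the reader.
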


\begin{defn}
We define $\sL$ to be the following subvariety of $B_a\times (\mP^2)^*$:
\[
\sL:=\{(x,[\fm])\mid x\in L_{\fm}=\pi_a^{-1}(\fm)\}.
\]
Let $p_1\colon  \sL \to B_a$ and 
$p_2\colon \sL\to (\mP^2)^*$ be the first and the second projections, respectively. Note that the $p_2$-fiber over a point $[\fm]$ is nothing but $L_{\fm}$.
\end{defn}

\begin{rem}
\label{rem:GonL}
To follow the sequel easily, it is useful to notice that
$\sL$ is the pull-back by the composite $B_a\times \Pst\overset{\pi_a\times \rm{id}}{\longrightarrow} \mP^2\times \Pst$ of the point-line incidence variety
$\{(x,[\fm])\mid x\in \fm\}\subset \mP^2\times \Pst$.
Therefore, we also see that $\sL$ is $G$-invariant, where the $G$-action is induced on $B_a\times \Pst$ by the $G$-action on $B_a$ defined as above and 
the contragredient $G$-action on $\Pst$. 
\end{rem}

\subsection{Higher degree case}
\label{higher}

\begin{defn}\label{proclaimer}
\begin{enumerate}[(1)]
\item 
For an integer $g\geq 0$,
we set 
\[
\sR_g:=p_{2*} p_1^*\sO_{B_a}(H+gL).
\]  
We see that $\dim H^0(\sO_{L_{\fm}}(H+gL))$ is constant since 
$H^1(\sO_{L_{\fm}}(H+gL))=\{0\}$ for any $\fm$ and $g\geq 0$. 
Therefore, by Grauert's theorem, $\sR_g$ is a locally free sheaf on $(\mP^2)^*$.
Set 
\[
\Sigma_g:=\mP(\sR_g^*),
\]
which is nothing but
the projective bundle over $(\mP^2)^*$
whose fiber over a point $[\fm]$ is the projective space $\mP(H^0(\sO_{L_{\fm}}(H+gL)))$.

\item
We denote by $\sH_g\subset \Sigma_g$ the sublocus
parameterizing smooth rational curves.
Note that
$\sH_g$ is a non-empty open subset of $\Sigma_g$
by Proposition \ref{razional 1}.
\end{enumerate}
\end{defn}

\subsection{$B_a$-Lines}
Now we construct a family of curves parameterizing the negative section of $L_{\fm}$ for an $\fm\not= \fj$, and the negative section plus a ruling of $L_{\fj}$. Intuitively, it is easy to imagine such a family but a rigorous construction needs some works. 

\begin{lem}
\label{lem:comp}
The following hold\,$:$
\begin{enumerate}[$(1)$]
\item
$H^0(\sO_{B_a}(H-L))=\{0\}$ and $H^1(\sO_{B_a}(H-L))=\mC$.
\item
$H^0(\sO_{B_a}(H-2L))=\{0\}$,
$H^1(\sO_{B_a}(H-2L))=\mC^2$, and
$H^2(\sO_{B_a}(H-2L))=\{0\}$.
\end{enumerate}
\end{lem}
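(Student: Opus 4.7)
The plan is to push the line bundles in question down to $\mP^2$ along the $\mP^1$-bundle $\pi_a$ and then apply the resolution \eqref{eq:E}. From $-K_{B_a}=2H_\sE+4L$ and $-K_{B_a}=2(H+L)$ I read off $H=H_\sE+L$, so $H-L=H_\sE$ and $H-2L=H_\sE-L$. Since $\pi_{a*}\sO_{B_a}(H_\sE)=\sE$ and $R^1\pi_{a*}\sO_{B_a}(H_\sE)=0$, the projection formula together with the degeneration of the Leray spectral sequence yields
\[
H^i(B_a,\sO_{B_a}(H-L))\simeq H^i(\mP^2,\sE),\qquad H^i(B_a,\sO_{B_a}(H-2L))\simeq H^i(\mP^2,\sE(-1)).
\]

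For part (1), I take cohomology of \eqref{eq:E}. The middle term $\sO(-1)^{\oplus 2}\oplus\sO(-2)$ is acyclic on $\mP^2$, while $\sO(-3)$ contributes only $h^2=1$. The long exact sequence therefore collapses to $h^0(\sE)=0$, $H^1(\mP^2,\sE)\simeq H^2(\mP^2,\sO(-3))\simeq\mC$, and $h^2(\sE)=0$. For part (2), I twist \eqref{eq:E} by $\sO(-1)$ and repeat: the middle term $\sO(-2)^{\oplus 2}\oplus\sO(-3)$ now has $h^2=1$ and $\sO(-4)$ has $h^2=3$, so the long exact sequence reduces to $h^0(\sE(-1))=0$ together with
\[
0\longrightarrow H^1(\mP^2,\sE(-1))\longrightarrow \mC^3\longrightarrow\mC\longrightarrow H^2(\mP^2,\sE(-1))\longrightarrow 0.
\]
Once I know $h^2(\sE(-1))=0$, this exact sequence forces $h^1(\sE(-1))=2$, completing part (2).

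The only genuinely non-bookkeeping step is the vanishing $h^2(\sE(-1))=0$. I would derive it from stability of $\sE$: by Serre duality on $\mP^2$ this cohomology group equals $h^0(\sE^*(-4))$, and since $\det\sE=\sO(-1)$ and $\sE$ has rank two we have $\sE^*\simeq \sE(1)$, so the group in question is $h^0(\sE(-3))$. Any nonzero section would provide an injection $\sO(3)\hookrightarrow \sE$, contradicting the slope inequality ($3>-\tfrac12$) for the stable bundle $\sE$. This stability argument is the main and only obstacle; everything else is a mechanical chase through \eqref{eq:E} and its twist.
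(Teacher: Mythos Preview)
Your approach is exactly the paper's: push down along $\pi_a$ to reduce to $H^i(\mP^2,\sE)$ and $H^i(\mP^2,\sE(-1))$, then chase the resolution \eqref{eq:E} and its twist. The bookkeeping in parts (1) and (2) is correct.

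There is, however, an arithmetic slip in your Serre duality step. Dualizing a twist reverses the sign: $(\sE(-1))^*=\sE^*(1)$, not $\sE^*(-1)$. Hence
\[
H^2(\mP^2,\sE(-1))\simeq H^0\bigl(\mP^2,\sE^*(1)\otimes\sO(-3)\bigr)^*=H^0(\mP^2,\sE^*(-2))^*,
\]
not $H^0(\sE^*(-4))^*$. Using $\sE^*\simeq\sE(1)$ this becomes $H^0(\mP^2,\sE(-1))^*$, not $H^0(\sE(-3))^*$. But you have already shown $h^0(\sE(-1))=0$ in the very same long exact sequence, so the vanishing is immediate and the stability argument is unnecessary. (This is precisely how the paper argues.) Your stability argument is itself correct and would equally well kill $h^0(\sE(-1))$, but as written it is applied to the wrong group.
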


\begin{proof} 
The results follow easily from the exact sequence (\ref{eq:E}).
Here we only show that $H^2(\sO_{B_a}(H-2L))\simeq H^2(\mP^2,\sE(-1))=\{0\}$,
By the Serre duality, we have  
$H^2(\mP^2,\sE(-1))\simeq H^0(\mP^2, \sE^*(-2))^*\simeq
H^0(\mP^2, \sE(-1))^*$, which is zero by (\ref{eq:E}). 
\end{proof}

\begin{nota}
\label{nota:blup}
Let $b\colon \tPst\to \Pst$ be the blow-up at the point $[\fj]$.
Let $E_0$ be the $b$-exceptional curve, and 
$r$ be a ruling of $\tPst\simeq \mF_1$.
The surface $\tPst$ will be the parameter space of the family of rational curves
which we are going to construct.  

For a point $[\fm]\in \Pst\setminus [\fj]$,
we use the same character $[\fm]$
for the corresponding point on $\tPst$.
\end{nota}

Let $b_{\sL}\colon \widetilde{\sL}\to \sL$ be the blow-up along the fiber of $p_2\colon \sL\to \Pst$ over $[\fj]$. By universality of blow-up, the variety $\widetilde{\sL}$ is contained in $B_a\times \tPst$ and a unique map $\tilde{p}_2\colon \widetilde{\sL}\to \tPst$ is induced.
We denote by $\tilde{p}_1\colon \widetilde{\sL}\to B_a$ 
the map obtained by composing $\widetilde{\sL}\to \sL$ with $p_1\colon \sL\to B_a$. 
\begin{equation}
\label{eq:sq}
\xymatrix{
\widetilde{\sL}\ar[r]^{b_{\sL}}\ar[d]_{\tilde{p}_2}\ar@/^18pt/[rr]^{\tilde{p}_1}& \sL\ar[d]_{p_2}\ar[r]^{p_1}& B_a\\
\tPst\ar[r]^{b} & \Pst &}
\end{equation}

\begin{lem}
\label{lem:mC}
It holds that $H^0(\tilde{p}_1^*\sO_{B_a}(H-L)\otimes \tilde{p}_2^*\sO(E_0+2r))\simeq \mC$.
\end{lem}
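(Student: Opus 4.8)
The plan is to compute the cohomology by pushing the line bundle down to the parameter surface $\tPst$ and applying the projection formula. Writing $\sG:=\tilde{p}_{2*}\bigl(\tilde{p}_1^*\sO_{B_a}(H-L)\bigr)$, the Leray spectral sequence gives the edge isomorphism $H^0(\widetilde{\sL},\,\tilde{p}_1^*\sO_{B_a}(H-L)\otimes\tilde{p}_2^*\sO(E_0+2r))\simeq H^0(\tPst,\sG\otimes\sO(E_0+2r))$, the twisting sheaf being a pull-back from the base. So the first step is to understand $\sG$. By the universality of the blow-up, $\widetilde{\sL}=\sL\times_{\Pst}\tPst$ (flat base change along the flat $p_2$), which pins down the exceptional divisor of $b_{\sL}$ as the reduced pre-image $\tilde{p}_2^{-1}(E_0)$ and lets me track the twist by $E_0$ precisely. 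The fibres of $\tilde{p}_2$ are identified by Lemma \ref{lem:neg}: over $[\fm]\in\tPst\setminus E_0$ the fibre is $L_{\fm}\simeq\mF_1$ with $\tilde{p}_1^*\sO(H-L)$ restricting to $\sO(C_0(\fm))$, while over each point of $E_0$ the fibre is a copy of $L_{\fj}\simeq\mF_3$ with restriction $\sO(C_0(\fj)+f)$.

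Next I would pin down $\sG$ as an actual sheaf on $\tPst\simeq\mF_1$ and compute the twisted sections. The clean route is to avoid $\tilde{p}_2$ and instead use the $\mP^1$-bundle structure $p_1\colon\sL\to B_a$ together with $\pi_a$: since $\pi_{a*}\sO_{B_a}(H-L)=\sE$ (this is exactly the content of Lemma \ref{lem:comp}(1), as $H^0(\mP^2,\sE)=0$ and $H^1(\mP^2,\sE)=\mC$), the relevant global sections reduce, via the projection formula and $p_{1*}p_2^*\sO_{\Pst}(1)=\pi_a^*(T_{\mP^2}(-1))$, to cohomology of the form $H^0(\mP^2,\sE\otimes\sU)$ for an explicit symmetric-power bundle $\sU$ built from the point--line incidence. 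This is then a finite computation driven entirely by the resolution \eqref{eq:E}: tensoring \eqref{eq:E} by $\sU$ and twisting, every term's cohomology is read off from Bott vanishing on $\mP^2$, and the one nonzero connecting map — multiplication by the linear form appearing in \eqref{eq:E} — is what cuts the answer down to a single dimension. The output to be established is that the space of sections is $\mC$, and I would package it so that the unique section is exactly the one cutting out the universal family of negative sections $C_0(\fm)$.

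The main obstacle is the jump of fibre cohomology along $E_0$. Both $h^0$ and $h^1$ of $\tilde{p}_1^*\sO(H-L)$ on the fibres jump by one over $E_0$ (from $1$ to $2$, and from $0$ to $1$, respectively), while $\chi$ stays equal to $1$; hence Grauert's theorem does not apply directly and the base-change map for $\sG$ fails to be surjective along $E_0$. The delicate point is therefore to show that this degeneration is exactly of order one, so that $\sG$ is still locally free of rank one (equivalently, that the restriction to the special fibre $L_{\fj}\simeq\mF_3$ has the predicted rank), and to identify its class on $\mF_1$. I expect to handle this either by representing $R\tilde{p}_{2*}$ by a two-term complex $[\sK^0\to\sK^1]$ of vector bundles whose degeneracy divisor is precisely $E_0$, or — more economically — by carrying out the $\mP^2$-computation above, where the jump is encoded transparently in the single multiplication map governed by \eqref{eq:E}. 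Either way, Lemma \ref{lem:comp} and the resolution \eqref{eq:E} are the essential inputs, and the final assertion $H^0=\mC$ follows once the rank of that one map is determined.
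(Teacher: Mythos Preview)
Your overall framework---reduce to computing $\sG=\tilde{p}_{2*}\tilde{p}_1^*\sO_{B_a}(H-L)$ on $\tPst$ via the projection formula and then twist by $\sO(E_0+2r)$---is exactly the paper's, and your diagnosis of the obstacle (the jump of fibre cohomology along $E_0$) is accurate. What is missing is the concrete device that actually produces the two-term complex you are hoping for. The paper does not attempt a general Grothendieck-complex argument; instead it uses that $\widetilde{\sL}$ sits inside $B_a\times\tPst$ as a \emph{divisor}, of class $\tilde{\rho}_1^*L+\tilde{\rho}_2^*(E_0+r)$ (the total transform of the incidence divisor on $B_a\times\Pst$). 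Tensoring the ideal-sheaf sequence by $\tilde{\rho}_1^*\sO_{B_a}(H-L)$ and pushing forward by $\tilde{\rho}_2$, Lemma~\ref{lem:comp} yields directly
\[
0\longrightarrow\sG\longrightarrow\sO(-E_0-r)^{\oplus 2}\longrightarrow\sO\longrightarrow R^1\tilde{p}_{2*}\tilde{p}_1^*\sO_{B_a}(H-L)\longrightarrow 0,
\]
the middle terms coming from $H^1(\sO_{B_a}(H-2L))\simeq\mC^2$ and $H^1(\sO_{B_a}(H-L))\simeq\mC$. The order-one degeneration along $E_0$ is then read off: the image of the middle map is an ideal whose zero scheme lies in the intersection of two members of $|E_0+r|$, while Grauert off $E_0$ forces the support of $R^1$ into $E_0$; hence that scheme is exactly $E_0$, the map factors as a surjection onto $\sO(-E_0)$, and $\sG\simeq\sO(-E_0-2r)$. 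Thus the twisted $H^0$ is $H^0(\sO_{\tPst})\simeq\mC$.

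Your alternative $\mP^2$-route via $p_1$ and $\pi_a$ runs into the problem that the twist by $E_0$ is not pulled back from $\Pst$, so the computation does not descend to $\sL$ or to $\mP^2$; you would still have to track the blow-up, and the cleanest way to do that is precisely the divisor-in-product trick above. In short, the inputs you name (Lemma~\ref{lem:comp} and the resolution \eqref{eq:E}) are the right ones, but the missing step is recognising $\widetilde{\sL}$ as a divisor in $B_a\times\tPst$ and exploiting its ideal-sheaf sequence.
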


\begin{proof}
Let $\tilde{\rho}_1\colon  B_a\times \tPst \to B_a$ and
$\rho_1\colon  B_a\times \Pst \to B_a$ be the first projections, and 
$\tilde{\rho}_2\colon B_a\times \tPst \to \tPst$ and
$\rho_2\colon B_a\times \Pst \to \Pst$ 
the second projections.
By Remark \ref{rem:GonL}, as a divisor on $B_a\times (\mP^2)^*$,
$\sL$ is linearly equivalent to $\rho_1^*L+\rho_2^*\sO_{\Pst}(1)$.
Since $\sL$ does not contain the fiber of $B_a\times (\mP^2)^*\to \Pst$
over $[\fj]$, the variety $\widetilde{\sL}$ is the total pull-back of $\sL$ by 
$B_a\times \tPst\to B_a\times \Pst$. Hence 
$\widetilde{\sL}$ is linearly equivalent to 
$\tilde{\rho}_1^*L+\tilde{\rho}_2^*(E_0+r)$ since $\sO(E_0+r)=b^*\sO_{\Pst}(1)$.

Now let us consider the following exact sequence:
\begin{align*}
0\to \widetilde{\rho}_1^*\sO_{B_a}(H-2L)\otimes \widetilde{\rho}_2^*\sO(-E_0-r)\to \\
\widetilde{\rho}_1^*\sO_{B_a}(H-L)\to
\widetilde{p}_1^*\sO_{B_a}(H-L)\to 0,
\end{align*}
which is obtained from the natural exact sequence
\[
0\to \sO_{B_a\times \tPst}(-\widetilde{\sL})\to\sO_{B_a\times \tPst}\to\sO_{\widetilde{\sL}}\to 0.
\]
by tensoring $\widetilde{\rho}_1^*\sO_{B_a}(H-L)$.
By Lemma \ref{lem:comp}, the pushforward of the exact sequence by $\widetilde{\rho}_2$ is 
\begin{align*}
0\to \tilde{p}_{2*}\tilde{p}_1^*\sO_{B_a}(H-L)\to \sO(-E_0-r)^{\oplus 2}\to \sO\to 
R^1 \tilde{p}_{2*}\tilde{p}_1^*\sO_{B_a}(H-L)\to 0.
\end{align*}
Note that, for a point $[\fm]\not =[\fj]$, it holds that $H^0(\sO_{L_{\fm}}(H-L))\simeq \mC$ and $H^1(\sO_{L_{\fm}}(H-L))=\{0\}$ by Lemma \ref{lem:neg}.
Therefore, by Grauert's theorem,
$\tilde{p}_{2*}\tilde{p}_1^*\sO_{B_a}(H-L)$ is an invertible sheaf possibly outside $E_0$, and the support of $R^1:=R^1 \tilde{p}_{2*}\tilde{p}_1^*\sO_{B_a}(H-L)$ is contained in $E_0$.

We show that the support of $R^1$ is equal to $E_0$.
Indeed, let $\sI$ be the image of the map $\sO(-E_0-r)^{\oplus 2}\to \sO$
in the above exact sequence,
which is an ideal sheaf. Then the closed subscheme $\Delta$ defined by $\sI$ is
the intersection of one or two members of $|E_0+r|$.
In particular, $\Delta$ is non-empty.
Noting $\sO_{\Delta}=R^1$ and the support of $R^1$ is contained in $E_0$,
the subscheme $\Delta$ must be equal to $E_0$.  

Therefore, the map $\sO(-E_0-r)^{\oplus 2}\to \sO$ is decomposed as
$\sO(-E_0-r)^{\oplus 2}\to \sO(-E_0)\hookrightarrow \sO$
and $\sO(-E_0-r)^{\oplus 2}\to \sO(-E_0)$ is surjective. 
Hence the kernel $\tilde{p}_{2*}\tilde{p}_1^*\sO_{B_a}(H-L)$ 
of the map $\sO(-E_0-r)^{\oplus 2}\to \sO(-E_0)$ is isomorphic to
$\sO(-E_0-2r)$.
Now we can compute
\begin{align*}
H^0(\widetilde{p}_1^*\sO_{B_a}(H-L)\otimes \widetilde{p}_2^*\sO(E_0+2r))\simeq\\
H^0(\widetilde{p}_{2*}\widetilde{p}_1^*\sO_{B_a}(H-L)\otimes \sO(E_0+2r))\simeq\\
H^0(\sO(-E_0-2r)\otimes \sO(E_0+2r))\simeq \mC.
\end{align*}
\end{proof}

In the next proposition, we obtain the desired family of curves. 

\begin{prop}
\label{prop:U1}
Let $\sU_1$ be the unique member of $|\tilde{p}_1^*\sO_{B_a}(H-L)\otimes \tilde{p}_2^*\sO(E_0+2r)|$.
Then the natural map $\sU_1\to \tPst$ is flat. Moreover, the fibers are described as follows\,$:$
\begin{enumerate}[$(1)$]
\item
the fiber over a point $[\fm]\not =[\fj]$ is the negative section of $L_{\fm}$,
and   
\item
the fiber over a point $x$ of $E_0$ 
is the negative section plus a ruling of $L_{\fj}$.
\end{enumerate}
\end{prop}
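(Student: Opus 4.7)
The strategy is to restrict the defining line bundle of $\sU_1$ fiber-by-fiber along $\tilde p_2$, identify the fibers, and then deduce flatness by miracle flatness.

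For any $x \in \tPst$ the line bundle $\tilde p_2^*\sO(E_0+2r)$ restricts trivially to $\tilde p_2^{-1}(x)$, so $\sO_{\widetilde{\sL}}(\sU_1)|_{\tilde p_2^{-1}(x)} \simeq \tilde p_1^*\sO_{B_a}(H-L)|_{\tilde p_2^{-1}(x)}$. If $x=[\fm] \in \tPst \setminus E_0$, the fiber is $L_\fm \simeq \mF_1$ and Lemma~\ref{lem:neg} identifies the restriction with $\sO_{\mF_1}(C_0(\fm))$, whose unique effective divisor is $C_0(\fm)$. For $x \in E_0$, I first check that $\tilde p_2^{-1}(x) = L_\fj$: as a divisor in $B_a \times \tPst$ one has $\widetilde{\sL} \sim \tilde\rho_1^*L + \tilde\rho_2^*(E_0+r)$, whose restriction to $B_a \times E_0$ has class $\rho_1^*L$ since $(E_0+r) \cdot E_0 = 0$ on $\tPst$, so $\widetilde{\sL}|_{B_a \times E_0} = L_\fj \times E_0$. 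By Lemma~\ref{lem:neg} the restriction of the line bundle to $L_\fj \simeq \mF_3$ becomes $\sO_{\mF_3}(C_0(\fj)+r)$; since $C_0(\fj) \cdot (C_0(\fj)+r) = -2 < 0$, every effective divisor in this class splits as $C_0(\fj)+R$ for some ruling $R$ of $L_\fj$.

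The delicate step is to show that for $x \in E_0$ the defining section $s$ of $\sU_1$ does not vanish identically on $\tilde p_2^{-1}(x)$; naive cohomology-and-base-change fails there because $h^0$ of the restricted bundle jumps from $1$ to $2$. I handle this using the identification $\tilde p_{2*}\sO_{\widetilde{\sL}}(\sU_1) \simeq \sO_{\tPst}$ obtained by twisting the pushforward computation of Lemma~\ref{lem:mC} by $\sO(E_0+2r)$; under this isomorphism $s$ corresponds to the nowhere-vanishing section $1 \in H^0(\sO_{\tPst})$. Because $\widetilde{\sL}$ is a hypersurface in the smooth $B_a \times \tPst$ (hence Cohen--Macaulay), and every $\tilde p_2^{-1}(x)$ has the expected codimension $2$, the fiber is cut out locally by a regular sequence pulled back from $\tPst$; then, if $s$ vanished on some $\tilde p_2^{-1}(x_0)$, the corresponding section of $\sO_{\tPst}$ would lie in $\mathfrak m_{x_0}$, contradicting nowhere-vanishing. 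Hence every fiber of $\sU_1 \to \tPst$ is precisely the effective divisor exhibited above.

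Finally, flatness follows by miracle flatness: $\tPst$ is a smooth surface, $\sU_1$ is a Cartier divisor in the Cohen--Macaulay $\widetilde{\sL}$ and hence itself Cohen--Macaulay of dimension $3$, and all fibers have the expected dimension $1$. The main obstacle throughout is exactly the nonvanishing step over $E_0$, where naive base change is insufficient and must be replaced by the global pushforward formula of Lemma~\ref{lem:mC}.
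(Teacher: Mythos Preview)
Your overall strategy---identify the fibers by restricting the line bundle, rule out that any fiber is contained in $\sU_1$, then apply miracle flatness---is sound and genuinely different from the paper's approach, which leans throughout on the $G$-invariance of $\sU_1$ (forced by uniqueness) and the orbit decomposition of $\tPst$. But the crucial nonvanishing step over $E_0$ is not justified by what you wrote.

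The implication ``$s$ vanishes on $\tilde p_2^{-1}(x_0)\Rightarrow$ the corresponding section of $\sO_{\tPst}$ lies in $\mathfrak m_{x_0}$'' is exactly the injectivity of the base-change map $\phi^0_{x_0}\colon(\tilde p_{2*}\sL)\otimes k(x_0)\to H^0(L_{\fj},\sL|_{L_{\fj}})$, and this does \emph{not} follow from the facts you invoke. Knowing $\tilde p_{2*}\sL\simeq\sO_{\tPst}$ only tells you that every section of $\sL$ on $\tilde p_2^{-1}(U)$ is an $\sO(U)$-multiple of $s$; it does not prevent \emph{all} of them from vanishing on a given fiber. The regular-sequence argument shows $s$ lies \emph{locally on $\widetilde{\sL}$} in the ideal $(\tilde p_2^*u,\tilde p_2^*v)\cdot\sL$, but passing from local expressions $s=u\,a_\alpha+v\,b_\alpha$ to a global one $s=u\,a+v\,b$ with $a,b\in\sL(\tilde p_2^{-1}(U))$ is exactly the obstruction. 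In general $\phi^0$ can fail to be injective over a two-dimensional base even when $R^0f_*$ is locally free: if the Grothendieck complex is $\sO^2\xrightarrow{(u,v)}\sO$, then $R^0=\sO$ via $(-v,u)$, yet at the origin $\phi^0$ is the zero map.

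Your approach can be repaired. Restrict to $E_0$: the long exact sequence for $R^i\tilde p_{2*}$ applied to $0\to\sL\otimes\tilde p_2^*\sO(-E_0)\to\sL\to\sL|_{L_{\fj}\times E_0}\to 0$ gives, on $E_0$,
\[
0\to\sO_{E_0}\xrightarrow{\psi}\sO_{E_0}(1)^{\oplus 2}\to\sO_{E_0}(2)\to\sO_{E_0}(1)\to\sO_{E_0}(1),
\]
using the identifications $\tilde p_{2*}\sL\simeq\sO$, $R^1\tilde p_{2*}\sL\simeq\sO_{E_0}(1)$ from Lemma~\ref{lem:mC} and the projection formula. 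The connecting map $\sO_{E_0}(2)\to\sO_{E_0}(1)$ vanishes for degree reasons, so $\psi$ sits in a short exact sequence with locally free cokernel $\sO_{E_0}(2)$; hence $\psi\otimes k(x_0)$ is injective for every $x_0\in E_0$, which is precisely $s|_{L_{\fj}\times\{x_0\}}\ne0$. The paper sidesteps all of this by observing that $G$ acts on $\sU_1$: if some fiber over $E_0$ were contained in $\sU_1$, then by transitivity on the open orbit either $L_{\fj}\times E_0\subset\sU_1$ (already excluded) or both $G$-fixed points of $E_0$ give contained fibers, forcing $|\sO_{L_{\fj}}(C_0+r)\boxtimes\sO_{E_0}(-1)|\ne\emptyset$, a contradiction.
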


\begin{proof}
Note that $\sU_1$ is Cohen-Macaulay since it is a divisor on a smooth variety.
Therefore the flatness follows from the smoothness of $\tPst$ and 
the descriptions of fibers, which we are going to give below.

Note that, by the uniqueness of $\sU_1$, the group $G$ acts on $\sU_1$,
where $G$ acts on $\sL$ and hence on $\widetilde{\sL}$ by Remark \ref{rem:GonL}.
Let $x$ be a point of $\tPst$. Set $[\fm]:=b(x)\in \Pst$.
Note that the fiber of $\widetilde{\sL}\to \tPst$ over $x$ is 
$L_{\fm}$. 

If $x\not \in E_0$, then $L_{\fm}\subset\sU_1$ or ${\sU_1}_{|{L_{\fm}}}$ is the negative section of $L_{\fm}\simeq \mF_1$
since
$\sU_1\in |\tilde{p}_1^*\sO_{B_a}(H-L)\otimes \tilde{p}_2^*\sO(E_0+2r)|$. We show that the latter occurs for any $x\not \in E_0$, which implies the assertion (1). 
If $L_{\fm}\subset \sU_1$ for an $x\not \in E_0$
such that $[\fm]\not \in \ell_1\cup \ell_2$,
then, by the description of the group action of $G$ (Corollary \ref{cor:gr}),
$L_{\fm}\subset \sU_1$ hold for all such $x$'s, which implies that
$\sU_1=\widetilde{\sL}$, a contradiction.
If $L_{\fm}\subset \sU_1$ for an $x\not \in E_0$
such that $[\fm]\in \ell_i$ for $i=1,2$,
then, again by the group action of $G$,
$L_{\fm}\subset \sU_1$ hold for all such $x$'s, which implies that
$\sU_1$ contains the pull-back of the strict transform $\ell'_i\subset \tPst$
of $\ell_i$. Since $\ell'_i$ is a ruling of $\tPst$,
this implies that 
$H^0(\tilde{p}_1^*\sO_{B_a}(H-L)\otimes \tilde{p}_2^*\sO(E_0+r))\not =0$,
which is impossible by the proof of Lemma \ref{lem:mC}.

Now assume that $x\in E_0$.
By a similar argument to the above one using the group action, we see that ${\sU_1}_{|{L_{\fj}}\times \{x\}}$ is the negative section plus a ruling
if $x$ is not contained in the strict transforms $\ell'_i$ of $\ell_i$
($i=1,2$). Therefore 
${\sU_1}_{|{L_{\fj}}\times E_0}$ is a member of the linear system $|\sO_{L_{\fj}}(H-L)\boxtimes \sO_{E_0}(1)|$ on ${L_{\fj}}\times E_0$. Suppose by contradiction that 
${\sU_1}_{|{L_{\fj}}\times \{x\}}={L_{\fj}}\times \{x\}$ for $x=\ell'_1\cap E_0$ or $\ell'_2\cap E_0$. Then, since the group action interchanges
$\ell'_1\cap E_0$ and $\ell'_2\cap E_0$, 
${\sU_1}_{|{L_{\fj}}\times \{x\}}={L_{\fj}}\times \{x\}$ for both $x=\ell'_1\cap E_0$ and $\ell'_2\cap E_0$.
This would imply
that $|\sO_{L_{\fj}}(H-L)\boxtimes \sO_{E_0}(-1)|$ is nonempty,
which is absurd.  
 
Therefore the assertion (2) follows.
\end{proof}

\begin{defn}
We call a fiber of $\sU_1\to \tPst$ {\it a $B_a$-line}.
Explicitly, by Proposition \ref{prop:U1},
a $B_a$-line is the negative section $C_0(\fm)$ of $L_{\fm}$ for $[\fm]\not =[\fj]$, or
the negative section $C_0(\fj)$ plus a ruling of $L_{\fj}$.

The name comes from the fact that the image of a $B_a$-line on the anti-canonical model $B$ is a line in the usual sense
when $B$ is embedded by $|M_B|$,
where $M_B$ is the ample generator of $\Pic B$.

\end{defn}

\subsection{$B_a$-Lines interpreted on $B_b$}
\label{sub:Bb}

In the section \ref{Hyp}, we will construct hyperelliptic curves using the map $\tilde{p}_{1|\sU_1} \colon \sU_1\to B_a$.
To understand the map $\tilde{p}_{1|\sU_1}$, 
it is convenient to interpret $B_a$-lines by the geometry of $B_b$.

\begin{nota}
\label{nota:F1F2}
\begin{enumerate}[(1)]
\item 
We denote by $F_1$ and $F_2$ the two singular $\pi_b$-fibers and 
by $v_i$ the vertex of $F_i$ ($i=1,2$).
\item
We denote by $F'_i$ the strict transform on $B_a$ of $F_i$ ($i=1,2$).

\item
By Corollary \ref{cor:flop}, we have $L\cdot \gamma_a=1$, and, by a standard property of flop, we have $L\cdot \gamma_b=-1$. This and the equality (\ref{eq:twice}) imply that $\gamma_b$ is a $\pi_b$-section.
Therefore $\gamma_b$ does not pass through $v_1$ nor $v_2$ and so
$B_b\dashrightarrow B_a$ is isomorphic near $v_1$ and $v_2$.
We denote by $v'_i$ the point on $B_a$ corresponding to $v_i$ $(i=1,2)$.
\end{enumerate}
\end{nota}

\begin{prop}
\label{prop:onBb}
\begin{enumerate}[$(1)$]
\item
The $\pi_a$-images of $v'_1$ and $v'_2$ in $\mP^2$ correspond to
the lines $\ell_1$ and $\ell_2$ in $\Pst$ by projective duality.
\item For a line $\fm\not =\fj$ on $\mP^2$,
the negative section $C_0(\fm)$ of $L_{\fm}$ is disjoint from $\gamma_a$.
\item
For a line $\fm\not =\fj$ on $\mP^2$, the curve $C_0(\fm)$ is the strict transform  of a ruling of a $\pi_b$-fiber disjoint from $\gamma_b$, and vice-versa.
Moreover, under this condition, $C_0(\fm)$ is the strict transform of a ruling of $F_1$ or $F_2$ if and only if $[\fm]\in \ell_1\cup \ell_2$. 
\item
A ruling $f$ of $L_{\fj}$ is the strict transform of a ruling of a $\pi_b$-fiber intersecting $\gamma_b$, and vice-versa $($note that $f\cap \gamma_a\not =\emptyset$, and $\gamma_a\cup f$ is a $B_a$-line$)$.
Moreover, under this condition, $f$ is the strict transform of a ruling of $F_1$ or $F_2$ if and only if the point $\pi_a(f)\in \mP^2$ corresponds to the line $\ell_1$ or $\ell_2$ in $\Pst$ by projective duality.
\end{enumerate}
\end{prop}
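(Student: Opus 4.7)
The plan is to locate $\gamma_a$ explicitly inside $L_{\fj}$, identify the points $v'_1,v'_2$ via the $G$-action, and then transfer $B_a$-lines across the flop via a short intersection-number computation. To start, I would identify $\gamma_a$ with the negative section $C_0(\fj)$ of $L_{\fj}\simeq \mF_3$: by Corollary~\ref{cor:flop} and $L\cdot \gamma_a=1$, the projection $\gamma_a\to \fj$ is an isomorphism, so $\gamma_a$ is a section of $L_{\fj}\to \fj$, and plugging $N_{\gamma_a/B_a}\simeq \sO(-1)^{\oplus 2}$ and $N_{L_{\fj}/B_a}|_{\gamma_a}\simeq \sO_{B_a}(L)|_{\gamma_a}\simeq \sO(1)$ into
\[
0\to N_{\gamma_a/L_{\fj}}\to N_{\gamma_a/B_a}\to N_{L_{\fj}/B_a}|_{\gamma_a}\to 0
\]
forces $\deg N_{\gamma_a/L_{\fj}}=-3$, which identifies $\gamma_a$ with $C_0(\fj)$.

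For (1), the group $G=\Aut B_a$ acts on $B_b$ through the flop, permuting the set $\{F_1,F_2\}$. By connectedness, the identity component $G_0$ fixes each $F_i$ setwise, hence fixes its intrinsic vertex $v_i$, and hence fixes $v'_i\in B_a$ (since $B_b\dashrightarrow B_a$ is an isomorphism near $v_i$ by Notation~\ref{nota:F1F2}(3)). Then $\pi_a(v'_i)\in \mP^2$ is $G_0$-fixed, and by Corollary~\ref{cor:gr}(1) the only such points are $(1:0:0)$ and $(0:1:0)$, which correspond by projective duality to $\ell_1,\ell_2$; the involution furnished by Corollary~\ref{cor:elm}(ii) swaps $\{v'_1,v'_2\}$ and $\{\ell_1,\ell_2\}$ coherently, matching the pairing.

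For (2) and (3), I would compute on $L_{\fm}\simeq \mF_1$ for $\fm\neq \fj$. By Lemma~\ref{lem:neg}, $(H-L)|_{L_{\fm}}=C_0(\fm)$, while $L|_{L_{\fm}}$ is a ruling $f$, so $H|_{L_{\fm}}=C_0(\fm)+f$; thus $H\cdot C_0(\fm)=0$, $L\cdot C_0(\fm)=1$, and $(H+L)\cdot C_0(\fm)=1$. Since by Proposition~\ref{prop:U1}(1) the curve $C_0(\fm)$ is an irreducible $B_a$-line, its image in $B$ is a line, and its lift on $B_b$ (via the common blow-up of the flop) is a smooth rational curve of $H$-degree $0$ and $(H+L)$-degree $1$, i.e., a ruling of a $\pi_b$-fiber. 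The key dichotomy is that a ruling of a $\pi_b$-fiber meeting $\gamma_b$ pulls back to $B_a$ as the reducible $B_a$-line $C_0(\fj)\cup f$ of Proposition~\ref{prop:U1}(2); irreducibility of $C_0(\fm)$ thus forces its ruling to be disjoint from $\gamma_b$, and simultaneously $C_0(\fm)\cap \gamma_a=\emptyset$, which proves (2). The converse in (3) follows from a dimension match, both families being irreducible of dimension $2$. Finally, the ruling lies in $F_i$ iff it passes through $v_i$, which (since every ruling of the cone $F_i$ contains its vertex) happens iff $v'_i\in L_{\fm}$, iff $\pi_a(v'_i)\in \fm$, iff $[\fm]\in \ell_i$ by (1).

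Assertion (4) is parallel: for a ruling $f\subset L_{\fj}$, the cycle $\gamma_a\cup f=C_0(\fj)\cup f$ is a $B_a$-line by Proposition~\ref{prop:U1}(2), mapping to a line through the node of $B$; its lift on $B_b$ is a smooth rational curve of $(H+L)$-degree $1$ meeting $\gamma_b$, i.e., a ruling through the section. The two $1$-parameter families—rulings of $L_{\fj}$ on one side, rulings through $\gamma_b$ on the other—correspond under the flop, and the ruling lies in $F_i$ iff it contains $v_i$, iff $\pi_a(f)$ is the point of $\fj$ dual to $\ell_i$ by (1). The hard part in this whole plan will be controlling the class $H$ and the relevant Hilbert components across the flop, in particular isolating precisely when a ruling of a $\pi_b$-fiber lifts irreducibly to $B_a$ versus acquires $\gamma_a$; this is accessible via the common blow-up of $\gamma_a$ and $\gamma_b$, together with the explicit structure of $B_a$-lines provided by Proposition~\ref{prop:U1}.
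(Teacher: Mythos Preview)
Your treatment of (1) is essentially the paper's: both exploit that $G$ permutes $\{F_1,F_2\}$, hence $\{v'_1,v'_2\}$, and then invoke Corollary~\ref{cor:gr}. Your normal-bundle identification $\gamma_a=C_0(\fj)$ is a clean alternative to the computation in Corollary~\ref{cor:flop}, and your dimension-matching for the converse in (3) is a legitimate variant of the paper's ``reverse the argument''.

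There is, however, a genuine gap in your handling of (2). You write that the lift of $C_0(\fm)$ on $B_b$ ``is a smooth rational curve of $H$-degree~$0$'', and then use the dichotomy on rulings to conclude $C_0(\fm)\cap\gamma_a=\emptyset$. But the $H$-degree of the strict transform on $B_b$ is \emph{not} automatically $H\cdot C_0(\fm)=0$: for the Atiyah flop one has
\[
H\cdot C'_0(\fm)=H\cdot C_0(\fm)+m\,(H\cdot\gamma_a)= -m,
\]
where $m$ is the number of intersection points of $C_0(\fm)$ with $\gamma_a$. So asserting $H\cdot C'_0(\fm)=0$ is exactly the statement $m=0$, which is (2) itself; the argument is circular as written. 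The paper closes this in one line: since $H$ is nef on $B_b$ (it is pulled back from $\mP^1$ by $\pi_b$), the inequality $-m\ge 0$ forces $m=0$. Once (2) is in hand, your intersection-number transfer for (3) and (4) goes through and matches the paper's computation that the strict transform has $H$-degree~$0$ and $L$-degree~$1$, hence is a ruling of a $\pi_b$-fiber.

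A minor point on the last clause of (3): ``the ruling lies in $F_i$ iff $v'_i\in L_{\fm}$'' actually needs $v'_i\in C_0(\fm)$, not just $v'_i\in L_{\fm}$. This follows once the forward direction of (3) is established (every ruling of $F_i$ has strict transform a $C_0(\fm)$ passing through $v'_i$, and these sweep out exactly the pencil $[\fm]\in\ell_i$), but you should say so. The paper's ``follows from (1)'' is equally terse here.
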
 

\begin{proof}
We show the assertion (1).
We use the group actions of $G$ on $B_a$ and $B_b$. The action of $G$ on $B_b$ fixes or interchanges $F_1$ and $F_2$, and hence $v_1$ and $v_2$. 
Since $B_b\dashrightarrow B_a$ is isomorphic near $v_1$ and $v_2$ as we noted 
in Notation \ref{nota:F1F2}, the group action on $B_a$ fixes or interchanges $v'_1$ and $v'_2$.
By Corollary \ref{cor:gr},
this implies that the images of $v'_1$ and $v'_2$ correspond to
the lines $\ell_1$ and $\ell_2$ by projective duality.

We show the assertion (2).
Let $C'_0(\fm)$ be the strict transform of $C_0(\fm)$ on $B_b$.
Note that $H\cdot C_0(\fm)=0$ by Lemma \ref{lem:neg}.
If $C_0(\fm)\cap \gamma_a\not =\emptyset$,
then $H\cdot C'_0(\fm)<H\cdot C_0(\fm)=0$ by a standard property of flop, which is a contradiction since
$H$ is nef on $B_b$.

We show the first assertions of (3) and (4).
Since the proofs are similar, we only show (4), which is more difficult.
We also only prove the only if part since the if part follows by reversing the argument. 
Recall that $\gamma_a+f \sim (H-L)_{|{L_{\fj}}}$.
Thus $H\cdot f=1$. Since $f$ intersects $\gamma_a$ transversely at one point,
and $H\cdot \gamma_a=-1$, 
we have 
$H\cdot f'=0$, where $f'$ is the strict transform of $f$ on $B_b$. Hence $f'$ is contained in a $\pi_b$-fiber $F$.
By the equality (\ref{eq:twice}), we have $-K_F=-K_{B_b}|_F=2L|_F$.
Therefore $f'$ is a ruling of $F$ since $L\cdot f'=L\cdot f+1=1$.

The latter assertions of (3) and (4) follows from (1).
\end{proof}

\begin{cor}
\label{cor:double}
Let $x$ be a point of $B_a\setminus \gamma_a$. 
If $x$ is not in the strict transform of $F_1$ nor $F_2$, then 
$x$ is contained in exactly two $B_a$-lines.
If $x$ is in the strict transform of $F_1$ or $F_2$
and is not equal to $v'_1$ nor $v'_2$, then 
$x$ is contained in exactly one $B_a$-line.

In particular, outside $\gamma_a\cup v'_1\cup v'_2$,
the map $\tilde{p}_{1|\sU_1}\colon \sU_1\to B_a$ is finite of degree two and is branched along $F'_1$ and $F'_2$.
\end{cor}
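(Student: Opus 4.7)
The plan is to reduce the counting to a standard fact about rulings on a quadric surface by passing from $B_a$ to $B_b$ via the flop, and then applying Proposition \ref{prop:onBb}.

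Fix $x \in B_a \setminus \gamma_a$, let $y \in B_b \setminus \gamma_b$ be its image under the flop $B_a \setminus \gamma_a \simeq B_b \setminus \gamma_b$, and let $F$ be the $\pi_b$-fiber containing $y$. The first key step is to establish a bijection between $B_a$-lines through $x$ and rulings of $F$ passing through $y$. By Proposition \ref{prop:onBb}(3), a $B_a$-line of the form $C_0(\fm)$ with $\fm \ne \fj$ containing $x$ transforms to a ruling of $F$ disjoint from $\gamma_b$ through $y$, and conversely; by Proposition \ref{prop:onBb}(4), a $B_a$-line of the form $\gamma_a \cup f$ with $x \in f \setminus \gamma_a$ transforms to a ruling of $F$ meeting $\gamma_b$ at the single point $F \cap \gamma_b$ and passing through $y$, and conversely. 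These two cases exhaust the rulings of $F$ through $y$.

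The two counting statements then follow from classical quadric geometry: when $F$ is a smooth quadric surface, i.e.\ $F \notin \{F_1, F_2\}$, exactly two rulings pass through any point; when $F = F_i$ is a quadric cone, exactly one ruling passes through any non-vertex point. Since the two singular fibers are precisely $F_1, F_2$ with vertices identified under the flop with $v'_1, v'_2$, this yields two $B_a$-lines through $x$ for $x \notin F'_1 \cup F'_2$ and exactly one for $x \in (F'_1 \cup F'_2) \setminus \{v'_1, v'_2\}$.

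For the final assertion about $\tilde{p}_{1|\sU_1}$, the map is proper because $\tPst$ is projective. Setting $U := B_a \setminus (\gamma_a \cup v'_1 \cup v'_2)$, the fiber counts above show that the fibers over $U$ are nonempty and finite, so the restriction is finite; the generic fiber has two reduced points, giving degree two. Since $\sU_1$ is Cohen--Macaulay (being a Cartier divisor on the smooth threefold $\widetilde{\sL}$) and $B_a$ is smooth, miracle flatness yields flatness on $U$, so every scheme-theoretic fiber over $U$ has length two. Hence wherever the set-theoretic fiber collapses to a single point, which by the counting is exactly $(F'_1 \cup F'_2) \cap U$, that point carries length two, showing ramification precisely along $F'_1 \cup F'_2$. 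I expect the only real obstacle to be securing the bijection in the second paragraph cleanly in both directions, especially handling the boundary case $x \in L_{\fj} \setminus \gamma_a$ where one of the $B_a$-lines through $x$ is of the form $\gamma_a \cup f$; once this is done, the degree and branching conclusions are formal consequences of miracle flatness and purity of the branch locus.
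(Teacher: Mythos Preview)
Your proof is correct and follows essentially the same approach as the paper, which disposes of the corollary in a single sentence: ``The assertions follow from Proposition \ref{prop:onBb} (3) and (4), and the description of rulings on quadric surfaces.'' Your expansion of this into an explicit bijection between $B_a$-lines through $x$ and rulings of the $\pi_b$-fiber through $y$, together with the miracle flatness argument for the scheme-theoretic branching in the ``in particular'' clause, supplies detail the paper leaves implicit; your self-flagged boundary case $x \in L_{\fj}\setminus \gamma_a$ is indeed handled correctly by Proposition \ref{prop:onBb}(4).
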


\begin{proof}
The assertions follow from Proposition \ref{prop:onBb}
(3) and (4), and the description of rulings on
quadric surfaces.  
\end{proof}


\section{Hyperelliptic curves parameterizing $B_a$-lines}
\label{Hyp}

\begin{defn}
\label{defn:CR}
Let $\fm\subset \mP^2$ be a line and $R\subset L_{\fm}$
a (not necessarily irreducible) member of the linear system $|(H+gL)_{|{L_{\fm}}}|$ (cf.~the subsection \ref{higher}). 
\begin{enumerate}[(1)]
\item We define 
\[
C_R:=\tilde{p}_1^{-1}(R)\cap \sU_1\subset \widetilde{\sL}
\]
with the notation as in the diagram (\ref{eq:sq}).
Here we take the intersection scheme-theoretically as follows\,$:$
first we consider $\tilde{p}_1^{-1}(L_m)\cap \sU_1$,
which is a divisor in $\sU_1$.
Second, we consider $C_R=\tilde{p}_1^{-1}(R)\cap \sU_1$ as a divisor
in $\tilde{p}_1^{-1}(L_{\fm}) \cap \sU_1$.
\item
We define $\widetilde{M}_R$ to be the image of $C_R$ on $\tPst$, and 
$M_R$ to be the image of $C_R$ on $\Pst$.
Note that $\widetilde{M}_R$ parameterizes $B_a$-lines intersecting $R$.
\item For a $\pi_a$-fiber $f$,
we also define
$C_f$, $\widetilde{M}_f$, and $M_f$
in a similar fashion to (1) and (2). 
\end{enumerate}
\end{defn}

In Proposition \ref{trovato} below,
we are going to show that $C_R$ is a hyperelliptic curve of genus $g$
under the following generality conditions for $\fm$ and $R$ as in Definition \ref{defn:CR}:

\begin{gen}
\label{gen}
Let $\fm\subset \mP^2$ be a line and $R\subset L_{\fm}$
a member of the linear system $|(H+gL)_{|{L_{\fm}}}|$. 
We consider the following conditions for $\fm$ and $R$:

\vspace{5pt}

\begin{enumerate}[(a)]
\item
$[\fm]\not \in \ell_1\cup \ell_2$. In particular, 
$v'_1, v'_2\not \in R$ by Proposition \ref{prop:onBb} (1).
\item
$R$ is smooth.
\item $R\cap \gamma_a=\emptyset$.
\item
$R$ intersects $F'_1$ and $F'_2$ transversely at $g+1$ points, respectively
(note that, by $R\sim (H+gL)_{|{L_{\fm}}}$,
we have $F'_i\cdot R=H\cdot R=g+1$).

Note that the condition (c)
implies that
$R\cap F'_1\cap F'_2=R\cap \gamma_a=\emptyset$.
\end{enumerate}

\vspace{5pt}

It is easy to see that, if $g\geq 0$, then general $\fm$ and $R$ satisfy
these conditions by Proposition \ref{razional 1}.
\end{gen}

\begin{lem}
\label{lem:F'}
If $[\fm]\not \in \ell_1\cup \ell_2$, then ${F'_i}_{|{L_{\fm}}}$ is
linearly equivalent to $C_0(\fm)+L_{|{L_{\fm}}}$, and is irreducible $(i=1,2)$.
In particular, $C_0(\fm)$ is disjoint from $F'_i$. 
\end{lem}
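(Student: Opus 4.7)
The plan is to identify the divisor class of $F'_i$ in $\Pic B_a$, restrict it to $L_{\fm}$ via Lemma \ref{lem:neg}, and then rule out reducibility by transporting $C_0(\fm)$ across the flop $B_a \dashrightarrow B_b$. Since the flop only modifies $\gamma_a$ and $\gamma_b$, it is an isomorphism in codimension one, hence preserves Weil divisor classes under strict transform. As $F_i \sim H$ on $B_b$ by definition of $H$ (a $\pi_b$-fiber), this gives $F'_i \sim H$ in $\Pic B_a$.

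The asserted linear equivalence then follows immediately from Lemma \ref{lem:neg}: for $\fm \neq \fj$, one has $(H-L)|_{L_{\fm}} \sim C_0(\fm)$, so $H|_{L_{\fm}} \sim C_0(\fm) + L|_{L_{\fm}}$, and restricting $F'_i \sim H$ to $L_{\fm}$ yields the stated formula. For irreducibility, I would use the standard fact that on $L_{\fm} \simeq \mF_1$ an effective divisor in the class $C_0(\fm) + f$ (with $f$ a ruling) is either irreducible, or of the form $C_0(\fm) + f'$ for some specific ruling $f'$. Thus irreducibility of ${F'_i}|_{L_{\fm}}$ is equivalent to $C_0(\fm) \not\subset F'_i$.

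This is where the hypothesis $[\fm] \notin \ell_1 \cup \ell_2$ becomes essential. Since $[\fj] = \ell_1 \cap \ell_2$, the hypothesis forces $\fm \neq \fj$, and by Proposition \ref{prop:onBb}(2), (3) the curve $C_0(\fm)$ is disjoint from $\gamma_a$ and maps via the flop to a ruling of some $\pi_b$-fiber; this fiber equals $F_1$ or $F_2$ precisely when $[\fm] \in \ell_1 \cup \ell_2$. Excluding this case, $C_0(\fm)$ lies in a $\pi_b$-fiber different from $F_1$ and $F_2$, so it is not contained in $F_i$ and therefore not in $F'_i$, giving irreducibility. The disjointness statement of the ``in particular'' is then immediate: ${F'_i}|_{L_{\fm}}$ and $C_0(\fm)$ are distinct irreducible curves in $L_{\fm} \simeq \mF_1$ whose intersection number $(C_0(\fm) + f) \cdot C_0(\fm) = 0$ forces them to be disjoint, which propagates to $B_a$ since $C_0(\fm) \subset L_{\fm}$. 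The only genuinely delicate point is the flop bookkeeping used to identify $F'_i$ with $H$ and to transport $C_0(\fm)$ to a ruling on $B_b$; once that is set up, everything reduces to routine divisor-class arithmetic on $\mF_1$.
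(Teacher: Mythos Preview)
Your proposal is correct and follows essentially the same approach as the paper's proof: both identify $F'_i\sim H$, restrict via Lemma~\ref{lem:neg} to obtain the class $C_0(\fm)+f$ on $L_{\fm}\simeq\mF_1$, and then reduce irreducibility to showing $C_0(\fm)\not\subset F'_i$ using Proposition~\ref{prop:onBb}(3). The only cosmetic difference is that the paper phrases the contradiction via the vertex $v'_i$ (a ruling of the cone $F_i$ passes through $v_i$, hence $C_0(\fm)\ni v'_i$, forcing $[\fm]\in\ell_i$), whereas you invoke the ``Moreover'' clause of Proposition~\ref{prop:onBb}(3) directly; these are the same argument.
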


\begin{proof}
We see that ${F'_i}_{|{L_{\fm}}}$ is
linearly equivalent to $C_0(\fm)+L_{|{L_{\fm}}}$ by
(\ref{eq:twice}) since $F'_i\sim H$.
Assume by contradiction that ${F'_i}_{|{L_{\fm}}}$ is reducible.
Then $C_0(\fm)\subset {F'_i}_{|{L_{\fm}}}$.
Therefore, by Proposition \ref{prop:onBb} (3),
$C_0(\fm)$ passes through $v'_i$, which contradicts the assumption that 
$[\fm]\not \in \ell_1\cup \ell_2$.
\end{proof}

\begin{lem}
\label{gen2}
Assume that a $\pi_a$-fiber $f$ is disjoint from $\gamma_a$.
Then the following hold\,$:$
\begin{enumerate}[$(1)$]
\item 
$v'_1, v'_2\not\in f$.
\item
$f$ intersects $F'_1$ and $F'_2$ at one point, respectively,
and
$f\cap F'_1\cap F'_2=\emptyset$.

\end{enumerate}
\end{lem}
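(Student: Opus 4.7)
My plan is to reduce everything to the fact that $f \cap \gamma_a = \emptyset$ forces $\pi_a(f) \notin \fj$, and then exploit the fact that the relevant special loci all live over $\fj$ (or have a controlled intersection pattern under the flop).

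First, I will note that $\gamma_a \subset L_{\fj}$ is a section of the ruled surface $\pi_a|_{L_{\fj}}\colon L_{\fj} \to \fj$, since $L \cdot \gamma_a = 1$ by Corollary \ref{cor:flop}. Hence any ruling of $L_{\fj}$ meets $\gamma_a$, so the hypothesis $f \cap \gamma_a = \emptyset$ forces $f \not\subset L_{\fj}$, i.e.\ $\pi_a(f) \notin \fj$. Consequently, $f \cap L_{\fj} = \emptyset$. For (1), Proposition \ref{prop:onBb}(1) combined with Corollary \ref{cor:gr}(1) tells us that $\pi_a(v'_1), \pi_a(v'_2)$ are the two points of $\fj$ dual to $\ell_1, \ell_2$. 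In particular $v'_1, v'_2 \in L_{\fj}$, and since $f \cap L_{\fj} = \emptyset$ we immediately obtain $v'_1, v'_2 \notin f$.

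Next, for the first claim of (2), I will compute $H \cdot f$. Since $f$ is a $\pi_a$-fiber we have $L \cdot f = 0$, and $-K_{B_a} = 2(H+L)$ restricts to $\mP^1 \simeq f$ as the anticanonical degree $2$, so $H \cdot f = 1$. Because $F'_i \sim H$, this gives $F'_i \cdot f = 1$, which in particular means $f \not\subset F'_i$, and the intersection is a single reduced (transverse) point.

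Finally, for $f \cap F'_1 \cap F'_2 = \emptyset$, I will argue that $F'_1 \cap F'_2 \subseteq \gamma_a$. On $B_b$, the fibers $F_1$ and $F_2$ are distinct $\pi_b$-fibers and so $F_1 \cap F_2 = \emptyset$. The flop $B_a \dashrightarrow B_b$ is an isomorphism away from the flopping curves $\gamma_a, \gamma_b$, so under this identification $(F'_1 \cap F'_2) \setminus \gamma_a \cong (F_1 \cap F_2) \setminus \gamma_b = \emptyset$. Thus $F'_1 \cap F'_2 \subseteq \gamma_a$, and combining this with the hypothesis $f \cap \gamma_a = \emptyset$ yields $f \cap F'_1 \cap F'_2 \subseteq f \cap \gamma_a = \emptyset$.

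The only real subtlety I anticipate is the last step, where one needs to be a bit careful with the flop: one must know that $\gamma_b$ is disjoint from $v_1,v_2$ (so the flop is a local isomorphism near $v_i$, as already recorded in Notation \ref{nota:F1F2}) and that $F_i$ does not contain $\gamma_b$ (which is clear since $\gamma_b$ is a $\pi_b$-section while $F_i$ is a single fiber). Everything else is a direct numerical computation using $-K_{B_a}=2(H+L)$ and the previously established descriptions of $\gamma_a$ and of $v'_1,v'_2$.
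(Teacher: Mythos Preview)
Your argument is essentially the same as the paper's, but there is one genuine slip. The deduction ``$F'_i \cdot f = 1$, which in particular means $f \not\subset F'_i$'' is not valid as stated: a positive intersection number $D\cdot C>0$ does not preclude $C\subset D$ (e.g.\ a curve of positive self-intersection on a surface sitting inside a threefold). The paper closes this gap with a short extra argument: if $f\subset F'_i$ then, since $f\cap\gamma_a=\emptyset$, the flop carries $f$ isomorphically to a curve $f'\subset F_i$ with $L\cdot f'=L\cdot f=0$; but $-K_{F_i}=2L_{|F_i}$, so $-K_{F_i}\cdot f'=0$, contradicting the ampleness of $-K_{F_i}$ on the quadric cone $F_i$. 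Alternatively, you can repair your own proof by reordering: once you have established $F'_1\cap F'_2\subseteq\gamma_a$, the fiber $f$ cannot lie in both $F'_1$ and $F'_2$; if, say, $f\not\subset F'_2$ but $f\subset F'_1$, then $f\cap F'_2\subset F'_1\cap F'_2\subseteq\gamma_a$, forcing $f\cap F'_2=\emptyset$, which contradicts $F'_2\cdot f=1$ for a proper intersection.

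Apart from this point your proof is fine. Your treatment of (1) is a slightly more concrete version of the paper's (the paper invokes the open $G$-orbit on $\mP^2$, which is exactly $\mP^2\setminus\fj$), and your explicit verification that $F'_1\cap F'_2\subseteq\gamma_a$ via the flop spells out what the paper records simply as $f\cap F'_1\cap F'_2=f\cap\gamma_a$.
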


\begin{proof}
The assumption $f\cap \gamma_a=\emptyset$ is equivalent to that
$\pi_a(f)$ belongs to the open orbit of $G$. Therefore the assertion (1) follows from Corollary \ref{cor:gr} and Proposition \ref{prop:onBb} (1).

We show that $f$ intersects $F'_i$ $(i=1,2)$ at one point.
By (\ref{eq:twice}), we have $F'_i\cdot f=H\cdot f=1$ since $-K_{B_a}\cdot f=2$ and $L\cdot f=0$. Therefore, we have only to show that $f$ is not contained in $F'_i$. If $f\subset F'_i$, then the strict transform $f'$ of $f$ is contained in $F_i$. Then, however, $-K_{F_i}\cdot f'=2L_{|F_i}\cdot f'=0$, a contradiction.

The assumption implies that
$f\cap F'_1\cap F'_2=f\cap \gamma_a=\emptyset$.
Therefore we have the assertion (2).

\end{proof}

\begin{prop}\label{trovato}
Assume that $g\geq 2$, and
$\fm$ and $R$ satisfy Generality Condition $\ref{gen}$ $(\rm{a})$--$(\rm{d})$.  
Then the following hold\,$:$
\begin{enumerate}[$(1)$]
\item $C_R$ is a smooth hyperelliptic curve of genus $g$.
The hyperelliptic structure is given by the map $\tilde{p}_{1|{C_R}}\colon C_R\to R$ and the map is branched at $R\cap (F'_1\cup F'_2)$.

\item
Assume that a $\pi_a$-fiber $f$ is disjoint from $\gamma_a$.
Then $C_f$ is a smooth rational curve and $C_f\to f$ is a double cover branched at
the two points $f\cap (F'_1\cup F'_2)$. Moreover, $M_f$ is the line of $\Pst$
corresponding to the point $\pi_a(f)\in \mP^2$ by projective duality.

\item $\widetilde{M}_R\subset \tPst$ is a curve
which is smooth outside the point $[\fm]$ and 
has a $g$-ple point at $[\fm]$.
\item
$\deg M_R=g+2$ and $M_R\simeq \widetilde{M}_R$.
Moreover, the unique $g^1_2$ on $C_R$ is given by the pull-back of 
the pencil of lines through $[\fm]$.  
\end{enumerate}
\end{prop}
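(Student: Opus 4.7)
Parts (1) and (2) both reduce to Corollary \ref{cor:double}: the plan is to verify that $R$ (respectively $f$) lies in the open locus where $\tilde{p}_{1|\sU_1}$ is a genuine finite double cover branched along $F'_1 \cup F'_2$. For (1), Condition (c) gives $R \cap \gamma_a = \emptyset$, while Condition (a) together with Proposition \ref{prop:onBb}(1) forces $v'_1, v'_2 \notin R$. Since $R$ is smooth (Condition (b)) and meets $F'_1 \cup F'_2$ transversely in $2(g+1)$ distinct points (Condition (d)), $C_R \to R \cong \mP^1$ is a smooth double cover with reduced nonempty branch divisor, hence automatically connected, and Riemann--Hurwitz yields $g(C_R)=g$. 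For (2), Lemma \ref{gen2} supplies the analogous avoidance and transversality for $f$, yielding $C_f \cong \mP^1$ branched at two points. To identify $M_f$, note that $f \cap \gamma_a = \emptyset$ forces every $B_a$-line meeting $f$ to be of the first type $C_0(\fm')$ with $\pi_a(f) \in \fm'$; the locus of such $\fm'$ in $\Pst$ is precisely the line dual to $\pi_a(f)$.

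For (3), the key computation is $C_0(\fm) \cdot R = g$ on $L_\fm \cong \mF_1$, using $C_0(\fm) \sim e$ (Lemma \ref{lem:neg}) and $R \sim e + (g+1)f$; Lemma \ref{lem:F'} combined with Condition (d) then guarantees that the $g$ intersection points are distinct and transverse. Each yields a point of $C_R$ mapping to $[\fm]$ under $\tilde{p}_2$. For $[\fm'] \neq [\fm]$ with $[\fm'] \in \widetilde{M}_R$, the intersection $C_0(\fm') \cap R$ lies in $L_\fm \cap L_{\fm'} = \pi_a^{-1}(\fm \cap \fm')$, a single $\pi_a$-fiber met by each of $C_0(\fm')$ and $R$ in exactly one point, so $|C_0(\fm') \cap R| \le 1$. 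Thus $\tilde{p}_2|_{C_R} : C_R \to \widetilde{M}_R$ is birational with $g$ preimages over $[\fm]$ and a unique preimage elsewhere; since $C_R$ is smooth, this shows $\widetilde{M}_R$ is smooth outside $[\fm]$ and has an ordinary $g$-fold point there.

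For (4), pick a general $p \in \fm$ and intersect $M_R$ with the dual line $\lambda_p \subset \Pst$. Because $p \in \fm$ we have $[\fm] \in \lambda_p$, contributing $g$ to the intersection at the $g$-ple point. Outside $[\fm]$, $\lambda_p \cap M_R$ parameterizes $B_a$-lines through the unique point $x_p := R \cap \pi_a^{-1}(p)$, of which there are exactly two by Corollary \ref{cor:double}, giving $\deg M_R = g+2$. Pulling back the pencil $\{\lambda_p : p \in \fm\}$ to $C_R$, the fixed part is the $g$ preimages of $[\fm]$ and the moving part is the pair of preimages of $x_p$ in the double cover $\tilde{p}_{1|C_R}: C_R \to R$; this is a fiber of the hyperelliptic structure, hence the $g^1_2$. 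Finally, $\widetilde{M}_R$ meets $E_0$ transversely in the single point corresponding to the unique $B_a$-line $\gamma_a \cup (L_\fj \cap \pi_a^{-1}(\fm \cap \fj))$ meeting $R$, so a standard local computation for the blow-down $b$ along $E_0$ shows $M_R$ is smooth at $[\fj]$, and the birational morphism $\widetilde{M}_R \to M_R$ is therefore an isomorphism.

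The most delicate step is confirming in (3) that the singularity of $\widetilde{M}_R$ at $[\fm]$ is \emph{ordinary}: the $g$ smooth branches exist, but one must rule out any coincidence of their tangent directions in $\tPst$. This requires pushing the transverse intersection $R \cap C_0(\fm)$ inside $L_\fm$ through $\tilde{p}_2$ by a local calculation in $\widetilde{\sL}$; the remaining assertions then follow from this and the intersection-theoretic bookkeeping above.
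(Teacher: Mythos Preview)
Your overall strategy matches the paper's closely: parts (1) and (2) via Corollary~\ref{cor:double}, the degree computation in (4) via intersection with a dual line through $[\fm]$, and the identification of the $g^1_2$ with the pencil of such lines are all exactly what the paper does. There is, however, a genuine gap in your argument for (3), and it propagates into (4).

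In (3) you write: ``$\tilde{p}_2|_{C_R}\colon C_R\to\widetilde{M}_R$ is birational with $g$ preimages over $[\fm]$ and a unique preimage elsewhere; since $C_R$ is smooth, this shows $\widetilde{M}_R$ is smooth outside $[\fm]$.'' This inference is false: a birational morphism from a smooth curve can have a unique preimage over a cusp of the image (e.g.\ $t\mapsto(t^2,t^3)$). Set-theoretic injectivity does not rule out ramification of $\tilde{p}_2|_{C_R}$, so you have not excluded cusps on $\widetilde{M}_R$. Similarly, your claim that the $g$-fold point is \emph{ordinary} does not follow from transversality of $R\cap C_0(\fm)$ on $L_{\fm}$ (which, incidentally, is not guaranteed by Conditions (a)--(d) either); and your argument for $M_R\simeq\widetilde{M}_R$ via ``$\widetilde{M}_R$ meets $E_0$ transversely in one point'' only establishes that the intersection is one point set-theoretically, not that the intersection multiplicity is $1$.

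The paper closes all three gaps simultaneously with a single genus comparison at the end of (4), which you are missing. Once $\deg M_R=g+2$ and $\mathrm{mult}_{[\fm]}M_R\geq g$ are known, the inequality
\[
g \;=\; g(C_R) \;=\; p_g(M_R) \;\leq\; p_a(M_R)-\delta_{[\fm]}(M_R) \;\leq\; \binom{g+1}{2}-\binom{g}{2} \;=\; g
\]
forces equality throughout: $M_R$ is smooth away from $[\fm]$, the point $[\fm]$ is an ordinary $g$-ple point, and (since $M_R$ is then smooth at $[\fj]$) the strict transform $\widetilde{M}_R$ is isomorphic to $M_R$. So the ``delicate step'' you flagged is not handled by a local computation at all; it falls out of the global genus count. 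The paper also organizes (3) using the auxiliary surface $\tilde{p}_1^{-1}(L_{\fm})\cap\sU_1$, whose map to $\tPst$ is the blow-up at $[\fm]$: this makes the multiplicity computation at $[\fm]$ transparent (it is the intersection number $C_R\cdot E_{\fm}=R\cdot C_0(\fm)=g$) without needing to know anything about tangent directions.
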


\begin{proof} 
We use the notation in Section \ref{section:iperellittico} freely.

\vspace{2pt}

\noindent(1). 
By Corollary \ref{cor:double} and Generality Condition \ref{gen} (a)--(d),
we see that $C_R$ with reduced structure satisfies
all of the claimed properties. Therefore, we have only to show $C_R$ is reduced. 
It suffices to show this for a general $R$ since $C_R$ form a flat family for $R$'s with Generality Condition \ref{gen} (a)--(d).
By the Bertini theorem on $\sL$, 
the divisor $\tilde{p}_1^{-1}(L_m)\cap \sU_1$ is a reduced surface since $|L|$ has no base point.
Now, again by the Bertini theorem in $\tilde{p}_1^{-1}(L_{\fm}) \cap\, \sU_1$,
the intersection $C_R=\tilde{p}_1^{-1}(R)\cap \sU_1$ is also reduced, and we are done. 

\vspace{2pt}

\noindent(2). The assertions for $C_f$ can be proved similarly to (1) by Lemma \ref{gen2}.
As for $M_f$, note that $f$ intersects the negative sections of $L_{\fm}$'s such that $\fm\ni \pi_a(f)$. Therefore the assertion follows since $M_f$ is the image of the smooth curve $C_f$. 

\vspace{5pt}

To show the remaining assertions, we investigate fibers of $\tilde{p}_1^{-1}(L_{\fm})\cap \sU_1\to \tPst$.
For a point $s\in \tPst$, the fiber over $s$ is the intersection between
$L_{\fm}$ and the $B_a$-line corresponding to $s$.
Therefore the fiber over $[\fm]$ can be identified with the negative section $C_0(\fm)$ of $L_{\fm}$.
Recall that $E_0$ is as in Notation \ref{nota:blup}. 
Let $t$ be the point of $E_0$ 
over which the fiber of $\sU_1\to \tPst$ is the union of $\gamma_a$ and the ruling of $L_{\fj}$ over $\fj\cap \fm$. Then the fiber of 
$\tilde{p}_1^{-1}(L_{\fm})\cap \sU_1\to \tPst$ over $t$ is
the ruling of $L_{\fj}$ over $\fj\cap \fm$.
Besides,
over $\tPst\setminus
([\fm]\cup t)$, the map $\tilde{p}_1^{-1}(L_{\fm})\cap \sU_1\to \tPst$ is one to one, hence is an isomorphism by the Zariski main theorem.

Note that the map $\sU_1\to \tPst$ is smooth over $\tPst\setminus E_0$.
Therefore,
$\tilde{p}_1^{-1}(L_{\fm})\cap \sU_1$ is smooth possibly outside the fiber over $t$. 
Therefore $\tilde{p}_1^{-1}(L_{\fm})\cap \sU_1\to \tPst$ is the blow-up at 
$[\fm]$ near $[\fm]$. We denote by $E_{\fm}$ and $E_t$ the exceptional curves over 
$[\fm]$ and $t$, respectively.

\vspace{5pt}

\noindent (3).
By Lemma \ref{lem:F'}, $\tilde{p}_1^{-1}(C_0(\fm))\cap \sU_1\to C_0(\fm)$ is an \'etale double cover, and hence $\tilde{p}_1^{-1}(C_0(\fm))\cap \sU_1$ consists of two disjoint smooth rational curves. Since $E_{\fm}\subset \tilde{p}_1^{-1}(C_0(\fm))\cap \sU_1$,
we see that $E_{\fm}$ is one of its components by the above description of $E_{\fm}$. 
Thus, since $R$ intersects $C_0(\fm)$ transversely at $g$ points,
the curve $C_R$ intersects $E_{\fm}$ transversely at $g$ points. 
Therefore we have the assertion (3) by blowing down $E_{\fm}$, except that we postpone proving
$\widetilde{M}_R$ is smooth outside $[\fm]$ in (4).

\vspace{2pt}

\noindent(4). 
To compute $\deg M_R$, we take a general fiber $f$ of $L_{\fm}\to \fm$
such that $f\cap \gamma_a=\emptyset$ and 
$R\cap f \not \in F'_1\cup F'_2\cup C_0(\fm)$. 
Then, since  $R\cap f \not \in F'_1\cup F'_2$, 
$C_R$ and $C_f$ intersect transversely at two points,
which is the inverse image of one point $R\cap f$.
Since $R\cap f \not \in C_0(\fm)$,
$C_R$ and $C_f$ does not intersect on $E_{\fm}$.
Therefore, the intersection multiplicity of $M_R$ and $M_f$ at $[\fm]$ is $g$. Thus we conclude that $\deg M_R=M_R\cdot M_f=g+2$ since $M_f$ is a line by generality of $f$ and the assertion (2).

Now the facts that $\widetilde{M}_R$ is smooth outside $[\fm]$ and
 $\widetilde{M}_R\simeq M_R$ 
follow since $g(C_R)=g$, $\deg M_R=g+2$ and $M_R$ has a $g$-ple point at $[\fm]$.

\end{proof}

\begin{rem}[Hyperelliptic structure of $C_R$ via the geometry of $B_b$]
\label{rem:hypBb}
Using the interpretation of $B_a$-lines on $B_b$, we may describe 
the hyperelliptic structure of $C_R$ on $B_b$. We think that it is helpful for
the readers to bear this in mind, so we give a sketch of it.

By Proposition \ref{prop:onBb} (3) and (4), $B_a$-lines correspond to
rulings of $\pi_b$-fibers in a one to one way.
Thus we may identify $C_R$ with the relative Hilbert scheme $H_R$ of 
rulings of $\pi_b$-fibers. The natural map $H_R\to \mP^1$, where $\mP^1$ is the target of $\pi_b$, is a double cover branched at the images of
two singular $\pi_b$-fibers since 
a smooth quadric has two families of rulings while a singular quadric has
one such a family.
\end{rem}

\begin{nota}[{\bf The marked point $[\fj]_R$ on $C_R$}]
\label{nota:marked}
Assume that $R$ satisfies Generality Condition \ref{gen} (a)--(d).
Then there is a unique $B_a$-line intersecting $R$ of the form $\gamma_a$ plus a ruling,
which is $\gamma_a\cup (L_{\fm}\cap L_{\fj})$.
We denote by $[\fj]_R$ the point of the hyperelliptic curve $C_R$ corresponding to this $B_a$-line since
this point is mapped to $[\fj]\in M_R\subset \Pst$. 
\end{nota}


\section{Theta characteristics on the hyperelliptic curves.}
\label{section:theta}
\subsection{Constructing theta characteristics}

By the above understanding of the hyperelliptic double cover $C_R\to R$,
we may construct an ineffective theta characteristic on
$C_R$ as follows: 

\begin{prop}\label{thetaiperdone} For a curve $R$ satisfying Generality Condition $\ref{gen}$ $(\rm{a})$--$(\rm{d})$ and $g\geq 2$, we denote by $h_R$
 the unique $g^1_2$ on the hyperelliptic curve $C_R$ $($cf.~Proposition $\ref{trovato}$ $(1)$ and $(4))$. Let $\nu\colon C_R\to M_R$ be the morphism constructed in Proposition $\ref{trovato}$, which is the normalisation.
Then  
$$\sO_{C_R}(\theta_R):=\nu^*\sO_{M_R}(1)\otimes_{\sO_{C_R}}\sO_{C_R}(-h_R-[\fj]_R)$$
is an ineffective theta characteristic on $C_R$.
\end{prop}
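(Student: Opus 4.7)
The plan is to verify separately (a) $2\theta_R\sim K_{C_R}$, so that $\theta_R$ is a theta characteristic, and (b) $h^0(\sO_{C_R}(\theta_R))=0$. The central identity, $\nu^*\sO_{M_R}(2)\sim 2[\fj]_R+(g+1)h_R$, will be extracted from the geometry of the two lines $\ell_1,\ell_2\subset \Pst$ of Proposition \ref{prop:jump}.

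Let $p_1,\ldots,p_g$ denote the preimages on $C_R$ of $[\fm]$. By Proposition \ref{trovato}\,(4), any line $\ell\subset\Pst$ through $[\fm]$ pulls back as $\nu^*\ell=\sum p_i+h$ with $h\in|h_R|$, hence $\sum p_i\sim \nu^*\sO_{M_R}(1)-h_R$. By Generality Condition \ref{gen}\,(a) neither $\ell_1$ nor $\ell_2$ contains $[\fm]$, while both contain $[\fj]$. By Proposition \ref{prop:onBb}\,(3), each point of $\ell_i\setminus\{[\fj]\}$ parameterizes a $B_a$-line that is the strict transform of a ruling of the singular $\pi_b$-fiber $F_i$; such a $B_a$-line meets $R$ only at a point of $R\cap F'_i$, and by Proposition \ref{trovato}\,(1) these are the branch points of $\tilde{p}_1|_{C_R}$. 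Under generic transversality, $\nu^*\ell_i=[\fj]_R+D_i$ where $D_i$ is a sum of $g+1$ distinct Weierstrass points of $C_R$, and $D_1+D_2$ is the full sum of the $2g+2$ Weierstrass points, linearly equivalent to $(g+1)h_R$ (via $\mathrm{div}(y)=\sum w_i-(g+1)(\infty_++\infty_-)$ in the affine model $y^2=F(x)$ with $\deg F=2g+2$). This gives the central identity, and combining with $\sum p_i\sim \nu^*\sO_{M_R}(1)-h_R$ yields $2\theta_R\sim(g-1)h_R\sim K_{C_R}$.

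For ineffectivity, the same computation also shows $\theta_R\sim D_1-h_R$. Suppose toward a contradiction that some effective divisor in $|D_1|$ dominates a fiber of the hyperelliptic pencil. In the affine model $y^2=F(x)G(x)$ with $F,G$ of degree $g+1$ having roots at the $x$-coordinates of the Weierstrass points in $D_1$ and $D_2$ respectively, the pencil $|D_1|$ (of projective dimension $1$, granted $h^0(D_1)=2$) is spanned by the constant $1$ and the rational function $y/F(x)$, whose divisor is $D_2-D_1$. A nonconstant member at $\lambda\in\mC^*$ cuts out the divisor $\{(x_0,-\lambda F(x_0))\colon G(x_0)=\lambda^2 F(x_0)\}$, in which the $y$-coordinate is a function of the $x$-coordinate, so no two distinct points form a hyperelliptic pair. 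Hence no member of $|D_1|$ dominates any fiber---contradiction, so $h^0(\sO_{C_R}(\theta_R))=0$.

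The main obstacle is the ineffectivity step, and specifically the verification that $h^0(D_1)=2$ (equivalently $h^0(K_{C_R}-D_1)=0$) under the generality hypotheses, so that the pencil $\{1,y/F(x)\}$ exhausts $|D_1|$. This follows from a parallel analysis applied to $|D_2|$ via the identity $K_{C_R}-D_1\sim D_2-2h_R$, since the symmetric argument shows that no member of $|D_2|$ can dominate two fibers of the hyperelliptic pencil.
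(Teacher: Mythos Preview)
Your argument and the paper's converge on the same key identification: pulling back the line $\ell_i$ to $C_R$ yields $[\fj]_R+D_i$ with $D_i$ a sum of $g+1$ distinct Weierstrass points, so that $\theta_R\sim D_i-h_R$. The paper then simply invokes \cite[p.~288, Exercise~32]{ACGH}, which states precisely that such a divisor is an ineffective theta characteristic on a hyperelliptic curve; this handles both $2\theta_R\sim K_{C_R}$ and $h^0(\theta_R)=0$ in one stroke. You instead prove $2\theta_R\sim K_{C_R}$ directly from $D_1+D_2\sim(g+1)h_R$ (which is fine, and is the content of that exercise anyway) and then attempt a hands-on verification of ineffectivity in the affine model. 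That last part is where there is a genuine gap.

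Your ineffectivity argument assumes $h^0(D_1)=2$ so that $|D_1|$ is exhausted by $\langle 1,\,y/F(x)\rangle$, and you correctly check that no member of this pencil contains an $h_R$-fiber. But your justification of $h^0(D_1)=2$ is circular: you reduce it via Riemann--Roch to $h^0(D_2-2h_R)=0$ and then say ``the symmetric argument shows that no member of $|D_2|$ can dominate two fibers.'' That symmetric argument, as you have set it up, again presupposes that $|D_2|$ is the pencil $\langle 1,\,y/G(x)\rangle$, i.e.\ that $h^0(D_2)=2$, which by Riemann--Roch is equivalent to $h^0(D_1-2h_R)=0$---the same statement for $D_1$ that you are trying to bootstrap. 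Nothing you have written rules out, a priori, a section of $L(D_2)$ outside $\langle 1,\,y/G(x)\rangle$ whose zero divisor contains two fibers. The fix is easy: either cite the ACGH exercise as the paper does, or compute $L(D_1)$ directly in the model $y^2=F(x)G(x)$. Writing $f=a(x)+b(x)y\in L(D_1)$ and tracking orders at the roots of $F$, the roots of $G$, and at $\infty_\pm$ (using that $x-a_i$ has a double zero at each Weierstrass point and that $y$ has poles of order $g+1$ at $\infty_\pm$) forces $a$ to be constant and $b=c/F(x)$ with $c$ constant; hence $h^0(D_1)=2$ with no appeal to $|D_2|$.

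One smaller point: the equality $\nu^*\ell_i=[\fj]_R+D_i$ requires knowing that none of the $g+1$ Weierstrass images on $\ell_i$ coincides with $[\fj]$, so that the intersection at $[\fj]$ is a single transverse point. You wave this through as ``generic transversality''; the paper deduces it from Generality Condition~\ref{gen}\,(a), arguing that if some $u_i$ equalled $[\fj]$ then the corresponding $B_a$-line would be of the form $\gamma_a\cup(\text{a }\pi_a\text{-fiber})$, which via Proposition~\ref{prop:onBb}\,(4) forces $[\fm]\in\ell_1\cup\ell_2$.
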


\begin{proof}
Let $F$ be one of the two singular $\pi_b$-fibers 
and $F'$ its strict transform on $B_a$. 
By Generality Condition $\ref{gen}$ (d), $R$ intersects $F'$ transversely at $g+1$ points, which we denote by $s_1,\dots, s_{g+1}$.
By Proposition \ref{trovato} (1), 
these points are contained in the branched locus of the hyperelliptic double cover $C_R\to R$. We denote by $t_1,\dots, t_{g+1}$
the inverse images on $C_R$ of $s_1,\dots, s_{g+1}$, and
by $u_1,\dots, u_{g+1}$ the images on $M_R$ of $t_1,\dots, t_{g+1}$.
Then, by Proposition \ref{prop:onBb} (3) and (4),
$u_1,\dots, u_{g+1}$ are contained in $\ell:=\ell_1$ or $\ell_2$.
We show that the points $u_1,\dots, u_{g+1}$
are different from $[\fj]$. 
Note that the unique $B_a$-line through a point $u_i$
is the strict transform $l_i$ of a ruling of $F$,
or the union of $l_i$ and $\gamma_a$ by 
Proposition \ref{prop:onBb} (3) and (4).
Assume by contradiction that
$u_i=[\fj]$ for some $i$.
Then, by Proposition \ref{prop:U1}, the latter occurs, namely, 
$l_i\cap \gamma_a\not =\emptyset$ and $l_i$ is a $B_a$-fiber.
Moreover, by Proposition \ref{prop:onBb} (4), 
the point $\pi_a(l_i)\in \mP^2$ corresponds to $\ell\subset \Pst$ by projective duality.
This implies that $[\fm]\in \ell$,
a contradiction to Generality Condition \ref{gen} (a). 

Therefore, since $\ell$ and $M_R$ contain $[\fj]$, and $\deg M_R=g+2$, we have 
$\ell_{|{M_R}}=u_1+\cdots+u_{g+1}+[\fj]$. 
Then, by the definition of $\theta_R$,
we have $\theta_R=t_1+\cdots+t_{g+1}-h_R$.
Now the assertion follows from \cite[p.288, Exercise 32]{ACGH}.
\end{proof}

\begin{rem}
\begin{enumerate}[(1)]
\item
In the proof of Proposition \ref{thetaiperdone}, we obtain the presentation
$\theta_R=t_1+\cdots+t_{g+1}-h_R$. So there are two such presentations according to
choosing $\ell_1$ or $\ell_2$. This is compatible with 
\cite[p.288, Exercise 32 (ii)]{ACGH}.
\item
In the introduction, we say that we construct the theta characteristic
from the incidence correspondence of intersecting $B_a$-lines.
We add explanations about this since this is not obvious from the above construction.  

The flow of the consideration below is quite similar to
the proof of Proposition \ref{thetaiperdone}.
Instead of a singular $\pi_b$-fiber, we consider a smooth $\pi_b$-fiber
$H\simeq \mP^1\times \mP^1$. Let $r_1$ and $r_2$ be the two rulings of $H$ intersecting $\gamma_b$, and $r'_1$ and $r'_2$ the strict transforms on $B_a$ of $r_1$ and $r_2$, respectively. By Proposition \ref{prop:onBb} (4), 
$r'_1$ and $r'_2$ are two $\pi_a$-fibers such that $\pi_a(r'_1), \pi_a(r'_2)\in \fj$.
Let $\delta_1$ and $\delta_2$ are the families of rulings of $F$ containing $r_1$ and $r_2$ respectively. By Proposition \ref{prop:onBb} (3) and (4),
there exists a family $\delta'_i$ of $B_a$-lines corresponding to $\delta_i$ $(i=1,2)$.
Note that this is nothing but $\widetilde{M}_{r_{3-i}}$ defined as in
Definition \ref{defn:CR} (3).
By the same proof as that of Proposition \ref{trovato} (2), we see that 
$M_{r_{3-i}}$ is a line in $\Pst$.
Let $l_1,\dots, l_{d-1}\in \delta'_1$ be the $B_a$-lines intersecting $R$.
Note that the $B_a$-line $r'_1\cup \gamma_a$ intersects $r'_2$ and
corresponds to the point $[\fj]$. Therefore 
$[\fj], [l_1],\dots, [l_{d-1}]\in M_R\cap M_{r_2}$.
In a similar way to the proof of 
Proposition \ref{thetaiperdone}, we can show that
$[\fj]$ is different from $[l_1],\dots, [l_{d-1}]$.
Hence we have $M_{r_2}\cap M_R=[l_{1}]+\ldots +[l_{d-1}]+[l]$. Let $m_1,\dots, m_{d-1}\in \delta'_2$ be the $B_a$-lines intersecting $R$.
By relabelling if necessary, we have $h_R\sim [l_i]+[m_i]$ by Remark \ref{rem:hypBb}.
Choose one of $m_i$'s, say, $m_1$.
Then, by the definition of $\theta_R$,
we have $\theta_R+[m_1]=[l_2]+\cdots+[l_{d-1}]$.
The $B_a$-lines $l_2,\dots, l_{d-2}$ are nothing but those intersecting $m_1$ and $R$
($l_1$ is excluded since it will  be disjoint from $m_1$
after the blow-up along $R$. See \cite[\S 4]{TZ1} and \cite[\S 3.1]{TZ2} for this consideration).  
\end{enumerate}
\end{rem}

\subsection{Reconstructing rational curves} 
Let $g\geq 2$. By Propositions \ref{trovato}, and \ref{thetaiperdone}
(see also Notation \ref{nota:marked}), 
we obtain a rational map
\begin{equation}\label{thetaiperdonemap}
    \pi_{g,1}\colon\sH_{g+2}
    \dashrightarrow\sS^{0,{\rm{\tiny{hyp}}}}_{g,1},\,\, [R]\mapsto 
    [C_R, [\fj]_R, \theta_R],
\end{equation}
which is fundamental for our purpose.
 
The next theorem shows how to construct the rational curve $R$ such that $\pi_{g,1}([R])=[(C,p,\theta)]$ for a general element 
$[(C,p,\theta)]$ in $\sS^{0,{\rm{hyp}}}_{g,1}$. 

This is one of our key result to show the rationality of $\sS^{0,{\rm{hyp}}}_{g,1}$.
 

\begin{thm}{\bf{(Reconstruction theorem)}}\label{Spinhyperellipticloci} 
The map $\pi_{g,1}$ is dominant. 
More precisely, let $[(C,p,\theta)]\in 
    \sS^{0,{\rm{hyp}}}_{g,1}$ be any element
such that $p$ is not a Weierstrass point, then there exists a point $[R]\in\sH_{g+2}$ such that $R$ satisfies Generality Condition $\ref{gen}$ $(\rm{a})$--$(\rm{d})$ and
    $\pi_{g,1}([R])=[(C,p,\theta)]$.
\end{thm}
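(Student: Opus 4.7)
The plan is to invert the construction of Sections \ref{Hyp} and \ref{section:theta}: from $(C,p,\theta)$ I will build a line bundle $L$ on $C$, map $C$ birationally into a $\mP^2$ identified with $\Pst$, recover the distinguished points $[\fj]$, $[\fm]$ and lines $\ell_1,\ell_2$ present in the construction, and then read off $R\subset L_{\fm}\subset B_a$.

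First I set $L:=\sO_C(\theta+h+p)$, where $h$ is the unique $g^1_2$ on $C$. Since $K_C\sim 2\theta$, Serre duality gives $h^1(L)=h^0(\theta-h-p)$; ineffectivity of $\theta$ forces this to vanish, since otherwise $\theta\sim h+p+D$ with $D\geq 0$ would yield an effective representative of $\theta$. Hence $h^0(L)=3$ by Riemann--Roch. An analogous vanishing argument applied to $L-q$ for varying $q\in C$ shows that $|L|$ is base-point free and defines a birational morphism $\phi_L\colon C\to M\subset\Pst$ onto a plane curve of degree $g+2$.

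Next I recover the distinguished data. By \cite[p.~288, Ex.~32]{ACGH}, write $\theta=T_1-h=T_2-h$ with $T_1,T_2$ the two distinct effective divisors of degree $g+1$ supported on Weierstrass points, satisfying $T_1+T_2=\sum_{j=1}^{2g+2}w_j$. The two sections of $L$ with zero divisors $T_i+p$ cut out distinct lines $\ell_1,\ell_2\subset\Pst$, both containing $\phi_L(p)$; set $[\fj]:=\phi_L(p)=\ell_1\cap\ell_2$. Let $D\in|\theta+p|$ be the unique effective divisor (of degree $g$, with $p\notin\mathrm{supp}(D)$ by ineffectivity of $\theta$). Since $L-D\sim h$, the $2$-dimensional subspace $H^0(L-D)\subset H^0(L)$ corresponds to a pencil of lines in $\Pst$ all passing through a single point, so $\phi_L$ contracts $D$ to one point $[\fm]$. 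The genus--degree formula pins the total $\delta$-invariant of $M$ at $\binom{g}{2}$; this forces $D$ to be reduced and $[\fm]$ to be an ordinary $g$-fold point exhausting the $\delta$-invariant. The hypothesis $p\notin\{w_j\}$ then implies $[\fm]\neq[\fj]$, that $M$ is smooth at $[\fj]$, and that $[\fm]\notin\ell_1\cup\ell_2$.

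With these data I transfer to the $B_a$-geometry. By Proposition \ref{prop:jump}(1) the line pair $\ell_1\cup\ell_2$ is projectively unique, so I may identify $\Pst$ with the dual plane of the $B_a$-base in such a way that $\ell_1\cup\ell_2=C(\sE)$, $[\fj]$ is dual to the jumping line $\fj\subset\mP^2$, and $[\fm]$ is dual to a line $\fm\subset\mP^2$ with $\fm\neq\fj$, yielding Generality Condition \ref{gen}(a). On $L_{\fm}\simeq\mF_1$, Lemma \ref{lem:F'} provides two irreducible sections $F'_i|_{L_{\fm}}$ of class $H|_{L_{\fm}}$; by Proposition \ref{prop:onBb}(4) the $g+1$ points of $\phi_L(T_i)\subset\ell_i$ transfer to $g+1$ prescribed points on $F'_i|_{L_{\fm}}$, a total of $2(g+1)$ prescribed branch points. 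A push-forward computation on $\mF_1$ yields $h^0((H+gL)|_{L_{\fm}})=2g+3$, so the linear system $|(H+gL)|_{L_{\fm}}|$ has projective dimension $2g+2$, matching the $2(g+1)$ point conditions; I take $R$ to be a solution.

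Finally I verify the generality conditions and the identification $\pi_{g,1}([R])=[(C,p,\theta)]$. Smoothness and irreducibility of $R$ follow from Bertini in the sublinear system of solutions (Condition (b)); transversality of $R\cap F'_i$ at the prescribed points gives Condition (d); $R\cap\gamma_a=\emptyset$ holds for a generic choice of $R$ in the solution set, using $\fm\neq\fj$ and Proposition \ref{prop:onBb}(2) to separate $R$ from $\gamma_a$ on the fiber $L_{\fm}\cap L_{\fj}$ (Condition (c)). Applying Propositions \ref{trovato} and \ref{thetaiperdone} to this $R$ then recovers a triple $(C_R,[\fj]_R,\theta_R)$ isomorphic to $(C,p,\theta)$: the two hyperelliptic double covers of $\fm\simeq\mP^1$ agree since they branch over the same $2(g+1)$ points; the marked points correspond by $\nu([\fj]_R)=[\fj]=\phi_L(p)$; and the theta characteristics agree via the common presentation $\theta_R=T_i-h$. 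The main obstacle is the singularity analysis of $M$: proving that the entire $\delta$-invariant concentrates at one ordinary $g$-fold point $[\fm]$ distinct from $[\fj]$ and disjoint from $\ell_1\cup\ell_2$. The hypothesis that $p$ is not a Weierstrass point enters essentially here, since for Weierstrass $p$ the image $\phi_L(p)$ would fall on one of the lines $\ell_i$ and the reconstruction would degenerate.
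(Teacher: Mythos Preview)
Your overall strategy matches the paper's: map $C$ into $\Pst$ by $|\theta+h+p|$, identify the distinguished data $([\fj],[\fm],\ell_1,\ell_2)$, and then find $R\in |(H+gL)_{|L_{\fm}}|$ by imposing $2g+2$ incidence conditions. Your singularity analysis via the $\delta$-invariant is a pleasant alternative to part of the paper's Lemma \ref{lem:reconst}, and your final identification via matching branch points is workable, though the paper instead shows directly that $M=M_R$ by a B\'ezout count ($M\cdot M_R\ge 1+4(g+1)+g^2>(g+2)^2$ if $M\ne M_R$), which is cleaner.

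There is, however, a genuine gap in your verification of Generality Conditions (b)--(d). You write that smoothness of $R$ ``follows from Bertini in the sublinear system of solutions'' and that $R\cap\gamma_a=\emptyset$ ``holds for a generic choice of $R$ in the solution set.'' But the linear system $|(H+gL)_{|L_{\fm}}|$ has projective dimension $2g+2$ and you are imposing $2g+2$ point conditions, so the expected solution locus is a single point; you cannot invoke Bertini or genericity there. The paper handles this differently: it shows that \emph{any} solution $R$ is irreducible by contradiction (if $R$ contained a ruling $f$, then $M_f$ would be a line through $[\fm]$ and two of the Weierstrass images $r_i,r'_j$, contradicting the separation properties of Lemma \ref{lem:reconst}), and irreducibility of a class with $R\cdot f=1$ on $\mF_1$ forces $R$ to be a section, hence smooth. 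Likewise, $R\cap\gamma_a=\emptyset$ is deduced not from genericity but from $\gamma_a\cap L_{\fm}=F'_1\cap F'_2\cap L_{\fm}$ together with the distinctness of the images $r_1,\dots,r_{g+1},r'_1,\dots,r'_{g+1}$; and transversality in (d) comes from the same distinctness. So the missing ingredient is precisely the careful separation statement (the analogue of Lemma \ref{lem:reconst}(3),(4)) and its use in place of your genericity appeals.
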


For our proof of the theorem, we need the following general results for
an element of $\sS^{0,{\rm{hyp}}}_{g,1}$. The proof given below is slightly long but it is elementary and only uses standard techniques from algebraic curve theory.
\begin{lem}
\label{lem:reconst}
Let $[(C,p,\theta)]$ be any element of $\sS^{0,{\rm{hyp}}}_{g,1}$.
Let $\{p_1,\dots, p_{g+1}\}\cup \{p'_{1},\dots, p'_{g+1}\}$ be 
the partition of the set of 
the Weierstrass points of $C$
such that
$\theta$ has the following two presentations\,$:$
\begin{equation}
\label{eq:twopre}
\theta\sim p_1+\cdots+p_{g+1}-g^1_2\sim p'_{1}+\cdots+p'_{g+1}-g^1_2
\end{equation}
$($cf.~\cite[p.288, Exercise 32]{ACGH}$)$.
The following assertions hold\,$:$
\begin{enumerate}[$(1)$]
\item The linear system $|\theta+g^{1}_{2}+p|$ defines a birational morphism
from $C$ to a plane curve of degree $g+2$. 
\item
$|\theta+p|$ has a unique member $D$ and 
it is mapped to a single point $t$ by the map $\varphi_{|\theta+g^{1}_{2}+p|}$.
\vspace{5pt}

\noindent For the assertions $(3)$ and $(4)$, we set $S:=\{p, p_1,\dots, p_{g+1}, p'_{1},\dots, p'_{g+1}\}$.
\item
The support of $D$ contains no point of $S$.
\item
The point $t$ as in $(2)$ is different from the $\varphi_{|\theta+g^{1}_{2}+p|}$-images of points of $S$. 
Besides, by the map $\varphi_{|\theta+g^{1}_{2}+p|}$,
no two points of $S$ are mapped to the same point. 
\end{enumerate}
\end{lem}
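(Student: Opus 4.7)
All four assertions reduce, via Riemann--Roch and Serre duality, to the ineffectivity of $\theta$ together with the Weierstrass relations $2p_i\sim 2p'_j\sim g^1_2$. I set $L:=\theta+g^1_2+p$ (of degree $g+2$) and $\varphi:=\varphi_{|L|}$, and I tacitly assume $p$ is not a Weierstrass point, which is the regime relevant for Theorem~\ref{Spinhyperellipticloci} (without this hypothesis $D$ itself would be supported on Weierstrass points and $(3)$ would fail).

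For $(1)$ and $(2)$: Riemann--Roch gives $h^0(L)=3+h^0(\theta-g^1_2-p)$, and the correction vanishes since an effective $\theta-g^1_2-p$ would force $\theta$ to be effective. The same argument yields $h^0(\theta+p)=1+h^0(\theta-p)=1$, so there is a unique $D\in|\theta+p|$; since $L\sim D+g^1_2$, the two-dimensional subspace $H^0(g^1_2)\subset H^0(L)$ consists precisely of the sections of $L$ vanishing on all of $D$, so $\varphi$ contracts $D$ to a single point $t\in\mP^2$. Birationality of $\varphi$ follows by the standard Riemann--Roch test: for generic $q_1,q_2\in C$ the class $\theta-g^1_2-p+q_1+q_2$ of degree $g-2$ is not effective, whence $h^0(L-q_1-q_2)=1=h^0(L)-2$ and $\varphi$ separates generic points, so the image is a plane curve of degree $\deg L=g+2$.

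For $(3)$: if $q\in S\cap\Supp D$, the case $q=p$ yields $D-p\in|\theta|$, contradicting ineffectivity; if instead $q=p_i$, the relation $\theta+p_i\sim\sum_{j\neq i}p_j$ (from $2p_i\sim g^1_2$ applied to the first presentation of $\theta$) together with Serre duality converts effectivity of $\theta+p-p_i$ into effectivity of $\sum_{j\neq i}p_j-p$, forcing $p$ to be one of the Weierstrass points $p_j$ and contradicting the non-Weierstrass hypothesis. The case $q=p'_j$ is symmetric via the second presentation. For $(4)$, the fibre $\varphi^{-1}(t)$ coincides with $\Supp D$: indeed $\varphi(q)=t$ forces $H^0(L-q)\supset H^0(L-D)=H^0(g^1_2)$, and base-point-freeness of $|g^1_2|$ then gives $q\in\Supp D$; combined with $(3)$ no point of $S$ maps to $t$. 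Injectivity of $\varphi$ on $S$ amounts to showing $h^0(\theta-g^1_2-p+q_1+q_2)=0$ for distinct $q_1,q_2\in S$; I would handle this by case analysis on how $(q_1,q_2)$ distributes among $\{p\}$, $\{p_i\}$, $\{p'_j\}$, each time reducing effectivity of this degree-$(g-2)$ class to $p$ being a Weierstrass point.

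The main obstacle is the mixed sub-case $q_1\in\{p_i\}$, $q_2\in\{p'_j\}$ of injectivity in $(4)$, where neither presentation of $\theta$ alone eliminates the $-g^1_2$ term; the clean route is to use the identity $p_1+\cdots+p_{g+1}\sim p'_1+\cdots+p'_{g+1}\sim \theta+g^1_2$ to rewrite the Weierstrass sum in a single partition before invoking $2p'_j\sim g^1_2$. The remaining cases collapse immediately upon applying the matching presentation of $\theta$ and the Weierstrass relation to cancel $g^1_2$.
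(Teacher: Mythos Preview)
Your plan is essentially the paper's: everything is reduced to Riemann--Roch, Serre duality, and the Weierstrass relations $2p_i\sim g^1_2$. You are also right that $(3)$ needs $p$ non-Weierstrass; the paper's own proof of $(3)$ silently uses this when it asserts $h^0\bigl(g\cdot g^1_2-(p_2+\cdots+p_{g+1}+p)\bigr)=0$, which is equivalent to $h^0\bigl(\sum_{j\neq 1}p_j-p\bigr)=0$ and fails if $p$ is one of the $p_j$.

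Two small points where the paper differs. First, you never check that $|L|$ is base-point free, so your ``birational morphism of degree $g+2$'' is not yet a morphism; generic point separation alone does not rule out fixed points. The paper handles this cleanly: the two effective divisors $p+\sum p_i$ and $p+\sum p'_i$ lie in $|L|$ and meet only at $p$, so $\Bs|L|\subset\{p\}$, and then $h^1(L)=h^1(L-p)=0$ (both by ineffectivity of $\theta$) gives $h^0(L)-h^0(L-p)=1$, so $p$ is not a base point either. Second, for the mixed case $q_1=p_i$, $q_2=p'_j$ in $(4)$ the paper abandons the cohomological computation and argues geometrically: the two divisors $p+\sum_k p_k$ and $p+\sum_l p'_l$ are distinct members of $|L|$ and hence cut out distinct lines in $\mP^2$; if $\varphi(p_i)=\varphi(p'_j)$ both lines would pass through $\varphi(p)$ and through this common image point, forcing them to agree. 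Your uniform Riemann--Roch reduction also works --- in fact the mixed case is no harder than the others: $\theta-g^1_2-p+p_i+p'_j\sim \sum_{k\neq i}p_k-p'_j-p$, and already $h^0(\sum_{k\neq i}p_k-p'_j)=0$ since $\sum_{k\neq i}p_k$ is a rigid sum of $g$ distinct Weierstrass points not containing $p'_j$. So the ``main obstacle'' you flag dissolves once you use that any sum of at most $g$ distinct Weierstrass points has $h^0=1$; no rewriting across partitions is needed.
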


\begin{proof} 
\noindent (1). We show that the linear system $|\theta+g^{1}_{2}+p|$ has no base points. 
By (\ref{eq:twopre}),
we see that $\Bs |\theta+g^{1}_{2}+p|\subset \{p\}$.
By the Serre duality, we have 
\[
H^1(\theta+g^{1}_{2}+p)\simeq H^0(K_C-\theta-g^1_2-p)^*=H^0(\theta-g^1_2-p)=0
\]
since $\theta$ is ineffective.
Similarly, we have $H^1(\theta+g^{1}_{2})=\{0\}$.
Therefore, by the Riemann-Roch theorem,
\[
h^0(\theta+g^{1}_{2}+p)-h^0(\theta+g^{1}_{2})=
\chi(\theta+g^{1}_{2}+p)-\chi(\theta+g^{1}_{2})=1,
\]
which implies that $p\not \in \Bs |\theta+g^{1}_{2}+p|$.

By the above argument, we see that $h^0(\theta+g^{1}_{2}+p)=\deg
(\theta+g^{1}_{2}+p)+1-g=3$. Therefore, $|\theta+g^{1}_{2}+p|$ gives a morphism $\varphi_{|\theta+g^{1}_{2}+p|}\colon C 
    \to\mP(V)\simeq \mP^2$ with $V=H^0(C,\sO_C(\theta+g^{1}_{2}+p))^*$.
Let $M:=\varphi_{|\theta+g^{1}_{2}+p|}(C)$ be the image of $C$.
We show that $C\to M$ is birational. Note that by the Riemann-Roch theorem
and $h^1(\theta+p)=h^0(K-\theta-p)=0$,
we have $h^0(\theta+p)=1$.
Therefore the hyperelliptic double cover
$\varphi_{|g^1_2|}\colon C\to \mP^1$ factors through the map 
$\varphi_{|g^{1}_{2}+\theta+p|}$. So we have only to show that 
$|\theta+g^{1}_{2}+p|$ separates the two points in a member of $|g^1_2|$.
This is equivalent to $h^0(\theta+g^{1}_{2}+p-g^1_2)=h^0(\theta+g^{1}_{2}+p)-2$, which follows from the above computations.
Since $C\to M$ is birational, the degree of $M$ is $g+2$.

\vspace{1pt}

\noindent (2). Since $h^0(\theta+p)=1$ as in the proof of (1), the linear system $|\theta+p|$ has a unique member $D$.
We see that $D$ is mapped to a point since
$\theta+g^{1}_{2}+p$ is the pull-back of $\sO_{\mP^2}(1)|_M$ and 
$h^0(\theta+g^1_2+p-(\theta+p))=h^0(g^1_2)=2$.

\vspace{1pt}

\noindent (3). The point $p$ is not contained in the support of $D$
since $h^0(\theta+p-p)=h^0(\theta)=0$. Let's us consider points of
$S\setminus \{p\}$. Without loss of generality,
we have only to show that $h^0(\theta+p-p_1)=0$.
By the Riemann-Roch theorem, the assertion is equivalent to $h^1(\theta+p-p_1)=0$.
By (\ref{eq:twopre}), $\theta+p-p_1=p_2+\cdots+p_{g+1}+p-g^1_2$.
Therefore, by the Serre duality,
we have
\[
h^1(\theta+p-p_1)=h^0(g\times g^1_2-(p_2+\cdots+p_{g+1}+p))
\]
since $K_C=(g-1)g^1_2$.
Now it is easy to verify this is zero by using the hyperelliptic morphism $C\to \mP^1$.

\vspace{1pt}

\noindent (4).
First we show that $t$ is different from the image of any point $x$ of $C\setminus D$.
Indeed, we have \[
h^0(\theta+g^1_2+p-(\theta+p)-x)=h^0(g^1_2-x)=1,
\] which means that $|\theta+g^1_2+p|$ separates $D$ and $x$.
In particular, we have the former assertion of (4) by (3). 

We show that $|\theta+g^1_2+p|$ separates any two of $p_1,\dots, p_{g+1}$.
Without loss of generality, we have only to consider the case of $p_1$ and $p_2$. It suffices to show that $h^0(\theta+g^1_2+p-p_1-p_2)=
h^0(\theta+g^1_2+p)-2=1$, which is equivalent to $h^1(\theta+g^1_2+p-p_1-p_2)=0$ by the Riemann-Roch theorem. 
By the presentation (\ref{eq:twopre}), we have
$h^1(\theta+g^1_2+p-p_1-p_2)=h^1(p_3+\cdots+p_{g+1}+p)$.
By the Serre duality, we have
\[
h^1(p_3+\cdots+p_{g+1}+p)=
h^0((g-1)g^1_2-p_3-\cdots-p_{g+1}-p)
\]
since $K_C=(g-1)g^1_2$.
Now it is easy to verify the r.h.s. is zero by using the hyperelliptic morphism $C\to \mP^1$.

The same argument shows that 
$|\theta+g^1_2+p|$ separates any two of $p'_1,\dots, p'_{g+1}$.
Moreover, if $p$ is distinct from a $p_i$ or $p'_j$,
the same proof works for the separation of $p$ and $p_i$ or $p'_j$.

It remains to show that 
$|\theta+g^1_2+p|$ separates one of $p_1,\dots,p_{g+1}$
and one of $p'_1,\dots, p'_{g+1}$.
Without loss of generality, 
we have only to consider the case of $p_1$ and $p'_1$.
If $p=p_1$, then $p\not =p'_1$, and hence we have already shown
that the images of $p=p_1$ and $p'_1$ are different. Thus we may assume
that $p\not =p_1, p'_1$. 
By (\ref{eq:twopre}),
$D_1:=p+p_1+\cdots+p_{g+1}$ and $D_2:=p+p'_1+\cdots+p'_{g+1}$ are two
distinct members of $|\theta+g^1_2+p|$.  
If the images of $p_1$ and $p'_1$ by the map $\varphi_{|\theta+g^{1}_{2}+p|}$ coincides,
then the images of $D_1$ and $D_2$ coincides since
they are the line through the images of $p$ and $p_1$, and
the line through the images of $p$ and $p'_1$.
This is a contradiction to a property of 
the map defined by $|\theta+g^1_2+p|$.
\end{proof}

\begin{proof}[{\bf Proof of Theorem \ref{Spinhyperellipticloci}}] 
Let $M$,
$r_1,\ldots, r_{g+1}$ and $r'_1,\ldots ,r'_{g+1}\in M$ be
the $\varphi_{|\theta+g^1_2+p|}$-images of $C$, the Weierstrass points 
$p_1,\dots, p_{g+1}$ and $p'_1,\dots, p'_{g+1}$
of $C$ as in (\ref{eq:twopre}), respectively. Let $r\in M$ be the image of $p$ and 
$t\in M$ the image of the unique member of $|\theta+p|$.
By Lemma \ref{lem:reconst} (4), 
$r$, $t$, $r_1,\ldots, r_{g+1}$, $r'_1,\ldots ,r'_{g+1}$ are distinct points
(recall that now we are assuming $p$ is not 
a Weierstrass point).
We set $V=H^0(C,\sO_C(\theta+g^{1}_{2}+p))^*$.
By (\ref{eq:twopre}), 
there are two lines $\ell,\ell'\subset \mP(V)$ such that
$\ell_{|M}=r_1+\cdots+r_{g+1}+r$ and
$\ell'_{|M}=r'_1+\cdots+r'_{g+1}+r$. 

We then identify the polarized 
    space $(\mP(V), \ell\cup \ell')$ with $(\Pst,\ell_1\cup \ell_2)$
(recall the notation as in Proposition \ref{prop:jump}).
   By this identification,
the point $r$ corresponds to $[\fj]$. Let $\fm$ be the line of $\mP^2$ such that $[\fm]$ corresponds to the point $t$. Since $r\not =t$, the line $\fm$ is not the jumping line $\fj$ of the bundle $\sE$ such that $B_a\simeq \mP(\sE)$.
Moreover, $\fm$ is not a jumping lines of the second kind of $\sE$, equivalently,
$[\fm]\not \in \ell_1\cup \ell_2$ since $t$ is distinct from $r$, $r_1,\ldots, r_{g+1}$, $r'_1,\ldots ,r'_{g+1}$.
This will show that $R$ constructed below satisfies 
Generality Condition
\ref{gen} (a).

We consider the linear system $|C_{0}(\fm)+(g+1)L_{|L_{\fm}}|$ 
on $L_{\fm}\subset B_a$. 
We look for a member $R\in |C_{0}(\fm)+(g+1)L_{|L_{\fm}}|$ 
with Generality Condition \ref{gen} (a)--(d) such that $C=C_R$. Note that the condition for an $R\in |C_{0}(\fm)+(g+1)L_{|L_{\fm}}|$ to intersect one fixed $B_a$-line is of codimension $1$.
Hence there exists at least one $R\in |C_{0}(\fm)+(g+1)L_{|{L_{\fm}}}|$ 
intersecting the $2g+2$ $B_a$-lines which correspond to the $2g+2$ points   
$r_1,\ldots, r_{g+1}$ and $r'_1,\ldots ,r'_{g+1}\in M$, since $\dim H^0(C_{0}(\fm)+(g+1)L_{|{L_{\fm}}})=2g+3$.
Equivalently, there exists 
at least one $R\in |C_{0}(\fm)+(g+1)L_{|L_{\fm}}|$ such that 
$r_1,\ldots ,r_{g+1}, r'_1,\ldots ,r'_{g+1}\in M_R$.
By Corollary \ref{cor:double}, $R$ intersects $F'_1$, and $F'_2$
at $g+1$ points, respectively, corresponding to $r_1,\ldots ,r_{g+1}$ and $r'_1,\ldots ,r'_{g+1}$.
Therefore, $R$ satisfies Generality Condition
\ref{gen} (d).
Moreover, $R$ does not pass through $F'_1\cap F'_2\cap L_{\fm}=\gamma_a\cap L_{\fm}$ since $r_1,\ldots, r_{g+1}$, $r'_1,\ldots ,r'_{g+1}$ are distinct points.
Therefore $R$ satisfies Generality Condition
\ref{gen} (c).

We show that $R$ is smooth, namely, $R$ satisfies 
Generality Condition \ref{gen} (b).
Indeed,
assume by contradiction that $R$ is reducible.
Then $R$ contains a ruling of $L_{\fm}$, say, $f$.
We have $f\cap \gamma_a=\emptyset$ since $R\cap \gamma_a=\emptyset$. 
Thus $M_R$ contains the curve $M_f$,
which is a line in $\Pst$ by Proposition \ref{trovato} (2), 
besides $M_f$ contains $t=[\fm]$, and one of 
$r_1,\ldots ,r_{g+1}$ and one of $r'_1,\ldots ,r'_{g+1}$
corresponding to $F'_1\cap f$ and $F'_2\cap f$, respectively.
By reordering the points, we may assume that $r_1, r'_1\in M_f$.
Therefore $t,r_1,r'_1$ are collinear.
This is, however, a contradiction since the line through $t$ and $r_1$
touches $M$ only at $t$ and $r_1$ (recall that $r_1$ is the image of a Weierstrass point).

Finally we show $M=M_R$. We have checked $\fm$ and $R$ satisfy Generality Condition
\ref{gen} (a)--(d).
Note that, 
by the constructions of $M$ and $M_R$ as the images of the map $\varphi_{|\theta+g^1_2+p|}$ and $\varphi_{|\theta_R+h_R+[\fj]_R|}$ respectively,
there exists a line 
through $t$ and touches both $M$ and $M_R$ at $r_i$ with multiplicity two $(i=1,\dots,g+1)$, 
and the same is true for $r'_j$ $(j=1,\dots,g+1)$.
Hence the intersection multiplicities of $M_R$ and $M$ at $r_i$ and $r'_j$ are
at least two.
Therefore the scheme theoretic intersection $M\cap M_R$ contains 
    $r$, the $2(g+1)$ points $r_{i},r'_{j}$, $i,j=1,... ,g+1$ with multiplicity $\geq 2$ and we also have a fat point of multiplicity $g^2$ at $t$. 
This implies that, if $M\not =M_R$, then $M\cdot M_R\geq 
    1+4(g+1)+g^{2}=(g+2)^{2}+1$, which is a contradiction since $\deg M=\deg M_R=g+2$. Now we conclude that $M_R=M$.
\end{proof}

Theorem \ref{Spinhyperellipticloci} has a nice corollary,
which seems to be unknown.

\begin{cor}\label{belloiperellitticopuntato} The moduli space
$\sS^{0,{\rm{hyp}}}_{g,1}$ and the moduli space
$\sS^{0,{\rm{hyp}}}_{g}$ of ineffective spin hyperelliptic curves are irreducible. 
\end{cor}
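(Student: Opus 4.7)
The plan is to deduce both irreducibility statements directly from the dominance of the rational map $\pi_{g,1}$ established in Theorem \ref{Spinhyperellipticloci}. The key observation is that the source of $\pi_{g,1}$ is manifestly irreducible, so the target inherits irreducibility from dominance, and then a forgetful map transfers this to the unpointed moduli space.

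First I would verify that the source $\sH_{g+2}$ of $\pi_{g,1}$ is irreducible. By Definition \ref{proclaimer}, $\sH_{g+2}$ is a nonempty Zariski open subset of $\Sigma_{g+2}=\mP(\sR_{g+2}^*)$, which is a projective bundle over the irreducible base $\Pst\simeq \mP^2$. A projective bundle over an irreducible variety is irreducible, hence so is its nonempty open subset $\sH_{g+2}$.

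Next, since Theorem \ref{Spinhyperellipticloci} asserts that $\pi_{g,1}\colon\sH_{g+2}\dashrightarrow \sS^{0,{\rm{hyp}}}_{g,1}$ is dominant, the image of the irreducible variety $\sH_{g+2}$ is dense in $\sS^{0,{\rm{hyp}}}_{g,1}$. Since the image of an irreducible set under a morphism is irreducible, and the closure of an irreducible set is irreducible, it follows at once that $\sS^{0,{\rm{hyp}}}_{g,1}$ is irreducible.

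Finally, to handle the unpointed moduli, I would consider the natural forgetful morphism
\[
\varphi\colon \sS^{0,{\rm{hyp}}}_{g,1}\longrightarrow \sS^{0,{\rm{hyp}}}_{g},\qquad [(C,\theta,p)]\mapsto [(C,\theta)],
\]
which is visibly surjective: any class $[(C,\theta)]$ admits a preimage obtained by choosing any point $p\in C$. The image of an irreducible set under a morphism is irreducible, so $\sS^{0,{\rm{hyp}}}_{g}$ is irreducible as well. There is no serious obstacle in the argument; all the substantive geometric content has already been packaged into Theorem \ref{Spinhyperellipticloci}, and the only things to check are the irreducibility of $\sH_{g+2}$ and the surjectivity of the forgetful map, both of which are routine.
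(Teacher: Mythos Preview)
Your proof is correct and follows essentially the same route as the paper: irreducibility of $\sH_{g+2}$ as an open subset of a projective bundle, dominance of $\pi_{g,1}$ from Theorem \ref{Spinhyperellipticloci}, and then the forgetful map to $\sS^{0,{\rm{hyp}}}_{g}$. The only cosmetic difference is that the paper phrases dominance as ``dominant to each irreducible component'' and calls the forgetful map dominant rather than surjective, but the logic is the same.
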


\begin{proof} 
By Definition \ref{proclaimer},
$\sH_{g+2}$ is an open subset of the projective bundle $\Sigma_{g+2}$ over the projective plane. Therefore $\sH_{g+2}$ is irreducible.
By Theorem \ref{Spinhyperellipticloci} we know that 
    the map $ \pi_{g,1}\colon  
\sH_{g+2}\dashrightarrow \sS^{0,{\rm{hyp}}}_{g,1}$ is dominant
to each irreducible component of $\sS^{0,{\rm{hyp}}}_{g,1}$. 
The forgetful morphism 
$\sS^{0,{\rm{hyp}}}_{g,1} \to\sS^{0,{\rm{hyp}}}_{g}$ is dominant too. 
Hence the claim follows.

\end{proof}

\subsection{Birational model of $\sS^{0,{\rm{hyp}}}_{g,1}$}
\label{subsection:Bir}
Let $\fm$ be a general line in $\mP^2$.
By Theorem \ref{Spinhyperellipticloci} and the group action of $G$ on $\sH_{g+2}$, 
the map $ \pi_{g,1}\colon  
\sH_{g+2}\dashrightarrow \sS^{0,{\rm{hyp}}}_{g,1}$ induces a dominant rational map $\rho_{g,1}\colon |(H+gL)_{|L_{\fm}}|\dashrightarrow
\sS^{0,{\rm{hyp}}}_{g,1}$. 
Recall the definition of the subgroup $\Gamma$ of $G$ as in Lemma \ref{lem:fixLm}.
By the classical Rosenlicht theorem, we can find an $\Gamma$-invariant open set $U$ of $|(H+gL)_{|L_{\fm}}|$ such that the quotient $U/\Gamma$ exists.
Since a general $\Gamma$-orbit in $|(H+gL)_{|L_{\fm}}|$ is mapped to a point by
$\rho_{g,1}$,
we obtain a dominant map $\overline{\rho}_{g,1}\colon U/\Gamma\to \sS^{0,{\rm{hyp}}}_{g,1}$. 

\begin{prop}
\label{prop:bir}
The dominant map $\overline{\rho}_{g,1}\colon U/\Gamma\to \sS^{0,{\rm{hyp}}}_{g,1}$
is birational. 
\end{prop}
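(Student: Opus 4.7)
The plan is to deduce birationality from generic injectivity. Since $\overline{\rho}_{g,1}$ is already dominant and the dimensions agree ($\dim U/\Gamma = (2g+2) - 2 = 2g = \dim \Hspin$), I only need to show that for general $R_1, R_2 \in U$ with $\overline{\rho}_{g,1}([R_1]) = \overline{\rho}_{g,1}([R_2])$ there exists $\gamma \in \Gamma$ with $\gamma \cdot R_1 = R_2$.

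The core idea is to transport an abstract isomorphism $\xi\colon (C_{R_1}, [\fj]_{R_1}, \theta_{R_1}) \simeq (C_{R_2}, [\fj]_{R_2}, \theta_{R_2})$ to a concrete element of $\PGL_3 = \Aut \Pst$. Since $\xi$ preserves the theta characteristic, the marked point, and hence the unique $g^1_2 = h_R$, it induces a linear isomorphism of $V_i := H^0(C_{R_i}, \theta_{R_i}+h_{R_i}+[\fj]_{R_i})^*$ and a projective isomorphism $\sigma\colon \mP(V_1) \to \mP(V_2)$ carrying $M_{R_1}$ to $M_{R_2}$. In the reconstruction of Theorem \ref{Spinhyperellipticloci} each $\mP(V_i)$ is canonically identified with the fixed $\Pst$ so that $r_i = [\fj]$, $t_i = [\fm]$ (here $\fm$ is the same for both $R_i$ since they both live in $L_{\fm}$, and intrinsically $[\fm]$ is pinned down as the image of the unique member of $|\theta_{R_i}+[\fj]_{R_i}|$, i.e.\ the unique $g$-fold point of $M_{R_i}$, cf.\ Lemma \ref{lem:reconst}(2)), and $\{\ell,\ell'\} = \{\ell_1,\ell_2\}$. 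Under these identifications, $\sigma$ becomes an element $B \in \PGL_3$ fixing both $[\fj]$ and $[\fm]$ and preserving $\ell_1 \cup \ell_2$ set-theoretically.

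Next I would lift $B$ to an automorphism of $B_a$. By Lemma \ref{lem:fixLm}, any such $B$ is the $\PGL_3$-component of some $\gamma = (A, B) \in \Gamma \subset G$. By Remark \ref{rem:GonL} and the uniqueness clause in Proposition \ref{prop:U1}, the construction $R \mapsto (\widetilde{M}_R, C_R)$ is $G$-equivariant, so $\widetilde{M}_{\gamma \cdot R_1} = B(\widetilde{M}_{R_1}) = \widetilde{M}_{R_2}$. Finally, any $R$ satisfying Generality Condition \ref{gen} is recovered from its $\widetilde{M}_R$ via $R = \tilde p_1(C_R)$, where $C_R \hookrightarrow \sU_1$ is determined as the normalization of $\widetilde{M}_R$ sitting inside the fibration $\sU_1 \to \tPst$; hence $\gamma \cdot R_1 = R_2$.

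The main subtlety I expect is justifying that $t_1 = t_2 = [\fm]$ as elements of the identified $\Pst$, so that $\sigma$ descends to an honest automorphism of the pointed configuration $(\Pst;\, [\fj], [\fm], \ell_1 \cup \ell_2)$ and thus lies in the subgroup realized by $\Gamma$. This rests on the intrinsic description of $[\fm]$ from the spin data given by Lemma \ref{lem:reconst}(2), combined with the fact that any isomorphism of one-pointed spin curves preserves $|\theta + p|$ and the hyperelliptic pencil. Once this is in place, the rest of the argument reduces to bookkeeping about $G$-equivariance of the explicit constructions in Sections \ref{section:iperellittico}--\ref{section:theta}.
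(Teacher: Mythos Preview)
Your proposal is correct and follows essentially the same route as the paper: reduce to generic injectivity, push the abstract isomorphism $\xi$ of one-pointed spin curves to a projective isomorphism $\Pst\to\Pst$ via the linear system $|\theta+h+p|$, observe that it fixes $[\fj]$ (the marked point), fixes $[\fm]$ (the unique $g$-fold point of $M_R$, equivalently the image of $|\theta+p|$), and preserves $\ell_1\cup\ell_2$ (the Weierstrass configuration), hence lies in $\Gamma$; then recover $R$ from $M_R$.

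The only place where the paper is more concrete is your last step. Your phrase ``$C_R\hookrightarrow\sU_1$ is determined as the normalization of $\widetilde M_R$ sitting inside the fibration $\sU_1\to\tPst$'' needs the extra remark that the lift of $\widetilde M_R$ into $\sU_1$ is pinned down by the condition $C_R\subset \tilde p_1^{-1}(L_{\fm})$, since for $[\fm']\neq[\fm]$ the $B_a$-line over $[\fm']$ meets $L_{\fm}$ in a single point. The paper bypasses this by giving the recovery directly: for a general line $\ell\subset\Pst$ through $[\fm]$ write $\ell_{|M_R}=[\fm]+[C_1]+[C_2]$ and take $R$ to be the closure of the locus of $C_1\cap C_2$ as $\ell$ varies. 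Either way the conclusion $\gamma\cdot R_1=R_2$ follows.
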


\begin{proof}
We show that 
$\overline{\rho}_{g,1}$ is generically injective. 
We consider two general elements 
$R,R'\in U$ and the two corresponding $\Gamma$-orbits
$\Gamma[R],\Gamma[R']$. Note that $M_R$ and $M_{R'}$ both pass through the points $[\fj]$ and $[\fm]$, and they both have Weierstrass points distributed on the two lines $\ell_1$ and $\ell_2$. Now assume that $[C_R,p,\theta_R]=[C_{R'},p',\theta_{R'}]\in \sS^{+{\rm{hyp}}}_{g,1}$, equivalently, there exists an isomorphism $\xi\colon C_R\to C_{R'}$ such that $\xi^*\theta_{R'}=\theta_R$ and $\xi(p)=p'$. 
We consider the following diagram:
\[
\xymatrix{C_R\ar[rr]^{(b\circ \tilde{p}_2)|_{C_R}}\ar[d]_{\xi} & & M_R\\
C_{R'}\ar[rr]_{(b\circ \tilde{p}_2)|_{C_{R'}}} & & M_{R'}.}
\]
Note that $(b\circ \tilde{p}_2)|_{C_R}(p)=
(b\circ \tilde{p}_2)|_{C_R}(p')=[\fj]$ by Notation \ref{nota:marked}.
Since the $g^1_2$ is unique on an hyperelliptic curve, 
we have $\xi^*h_{R'}=h_R$ where $h_R$ and $h_{R'}$ are respectively the $g^1_2$'s of $C_R$ and $C_{R'}$. 
Therefore there exists 
a projective isomorphism $\xi_M$ from $M_R$ to $M_{R'}$
such that $(b\circ \tilde{p}_2)|_{C_{R'}}\circ \xi=
\xi_M\circ (b\circ \tilde{p}_2)|_{C_{R}}$ and hence $\xi_M([\fj])=[\fj]$
since the morphisms $(b\circ \tilde{p}_2)|_{C_{R}}\colon C_R\to M_R\subset\Pst$ and $(b\circ \tilde{p}_2)|_{C_{R'}} \colon C_{R'}\to M_{R'}\subset\Pst$ are given respectively by $|\theta_R+p+h_R|$ and $|\theta_{R'}+p'+h_{R'}|$.
We also have $\xi_M([\fm])=[\fm]$ since $[\fm]$ is a unique $g$-ple point of
$M_R$ and $M_{R'}$ respectively by Proposition \ref{trovato} (3) and (4).
Let $g$ be an element of $\Aut \Pst$ inducing the projective isomorphism $\xi_M$. Since $\xi$ sends the Weierstrass points of $C_R$ to those of $C_{R'}$, the line pair $\ell_1\cup \ell_2$ must be sent into itself by $g$. Hence $g\in G$. Moreover, since $g$ fixes $[\fm]$ as we noted above, we have $g\in \Gamma$. In summary, we have shown
$gM_R=M_{R'}$. It remains to show that $gR=R'$. 
For this, we have only to show that $R$ is recovered from $M_R$.
Take a general line $\ell$ through $[\fm]$ and set $\ell_{|M_R}=[\fm]+[C_1]+[C_2]$ set-theoretically,
where $C_1$ and $C_2$ are $B_a$-lines. Note that $C_1\cap C_2$ is one point. 
Then $R$ is recovered as the closure of the locus of $C_1\cap C_2$
when $\ell$ varies.
\end{proof}

\section{Proof of Rationality}
\label{section:Rat}

\begin{thm}\label{belloiperellitticorat}
   $\sS^{0,{\rm{hyp}}}_{g,1}$ is a rational variety.
 \end{thm}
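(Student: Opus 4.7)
The plan is to exploit the birational model from Proposition~\ref{prop:bir}, which identifies $\sS^{0,{\rm{hyp}}}_{g,1}$ birationally with $U/\Gamma$, where $U$ is a $\Gamma$-invariant dense open subset of $|(H+gL)_{|L_{\fm}}|\cong \mP^{2g+2}$ and $\Gamma\simeq (\mZ_2\times G_a)\rtimes G_m$ is the explicit subgroup of $G$ stabilising $L_{\fm}$ described in Lemma~\ref{lem:fixLm}. Since $\dim\Gamma=2$, the quotient has expected dimension $2g$, matching the dimension of the moduli space; it therefore suffices to prove that $U/\Gamma$ is rational.

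First, I would render the $\Gamma$-action on the linear system completely explicit. Viewing $V:=H^0(L_{\fm},(H+gL)_{|L_{\fm}})$ as the $(2g+3)$-dimensional vector space underlying $\mP^{2g+2}$, I would use the presentation of $L_{\fm}\simeq \mF_1$ as a $\mP^1$-bundle over $\fm$, together with the matrix generators listed in Lemma~\ref{lem:fixLm}, to produce a basis of $V$ diagonalising $G_m$, upper-triangularising $G_a$, and realising $\mZ_2$ as a concrete coordinate involution. Geometrically, the $\mZ_2$-action is pinned down by its effect on the curves $(F'_1)_{|L_{\fm}}$ and $(F'_2)_{|L_{\fm}}$, which it must swap by Proposition~\ref{prop:onBb}.

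Second, I would quotient by the connected solvable subgroup $G_a\rtimes G_m$. Because $G_a$ is unipotent and acts linearly through upper-triangular matrices, Miyata's theorem (equivalently, an explicit slice obtained by inverting a coordinate on which $G_a$ acts faithfully) yields a rational $G_a$-quotient. The residual $G_m$-action on this quotient is diagonal with explicit weights, so standard torus invariant theory produces a purely transcendental field of $G_m$-invariants; combined, $U/(G_a\rtimes G_m)$ is rational of dimension $2g+1$.

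Finally, I would descend by the residual $\mZ_2$; this will be the hard step, since a quotient of a rational variety by an involution need not be rational and must be handled explicitly. The plan is to trace the involution through the invariant theory above: by the geometric description, $\mZ_2$ should exchange two symmetric blocks of $G_a\rtimes G_m$-invariant rational coordinates (those attached to $\ell_1$ and $\ell_2$, on which the Weierstrass points of $C_R$ are distributed), so that the $\mZ_2$-invariant field is generated by elementary symmetric functions of these two blocks together with at most one $\mZ_2$-anti-invariant whose square is already $G_a\rtimes G_m$-invariant. This will exhibit $U/\Gamma$ as birational to affine space, proving rationality.
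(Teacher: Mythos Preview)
Your strategy—reduce via Proposition~\ref{prop:bir} to $|(H+gL)_{|L_{\fm}}|/\Gamma$ and then compute the quotient in stages—is correct and would succeed, but the paper orders the three quotients differently, and the difference is instructive. You propose $G_a\rtimes G_m$ first and $\mZ_2$ last, and rightly flag the last step as the delicate one. The paper instead passes through the elementary transformation of Proposition~\ref{prop:elm} to $B_c\simeq\mP^1\times\mP^2$, so that $L_{\fm}$ is replaced by $\mP^1\times\fm$ and the linear system becomes curves of bidegree $(1,g+1)$ through the point $c=\gamma_c\cap(\mP^1\times\fm)$; after the coordinate change $x'_1=(x_1-x_2)/2$, $x'_2=(x_1+x_2)/2$ on the $\mP^1$-factor, the $\mZ_2$-generator acts merely by $(p_i,q_j)\mapsto(-p_i,q_j)$ on the coefficient space. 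Taking this quotient \emph{first} is then trivial (e.g.\ via $(p_i,q_j)\mapsto(p_ip_g,q_j)$ on an open chart), and the $G_a$- and $G_m$-actions descend in the same triangular and diagonal shape, so the remaining steps are a one-parameter slice for $G_a$ (normalising one coordinate to $0$) and a weighted projective space for $G_m$. Your intuition that $\mZ_2$ ``swaps two blocks attached to $\ell_1$ and $\ell_2$'' is correct in the unrotated $(x_1,x_2)$-basis—there it exchanges the two binary forms $F,G$ in $x_1F+x_2G$—but then the base-point constraint becomes ``$F$ and $G$ share a leading coefficient'' rather than a single vanishing, and carrying the swap through the $(G_a\rtimes G_m)$-invariants, while feasible along the lines you sketch, is heavier than the paper's sign-change shortcut. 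In short, both orders work; the paper's converts your ``hard step'' into the easiest one.
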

 \begin{proof} 
As in the subsection \ref{subsection:Bir}, we fix a general line $\fm$ in $\mP^2$.
By Proposition \ref{prop:bir}, we have only to show that 
$U/\Gamma$ is a rational variety.

Using the elementary transformation as in Proposition \ref{prop:elm}, 
we are going to reduce the problem to that on 
$\mP^1\times \fm$. Let $r_v$ and $r_h$ are rulings of 
the projections $\mP^1\times \fm\to \fm$ and 
$\mP^1\times \fm\to \mP^1$, respectively.
From now on, we identify $\mP^1\times \fm$ with $\mP^1\times \mP^1$ 
having the bi-homogeneous coordinate $(x'_1:x'_2)\times (y_2:y_3)$
with $x'_1:=(x_1-x_2)/{2}$ and $x'_2:=(x_1+x_2)/{2}$.
To clarify the difference of the two factors of $\mP^1\times \mP^1$,
we keep denoting it by $\mP^1\times \fm$.
With this coordinate, the action of 
$\Gamma\simeq (\mZ_2\times G_a)\rtimes G_m$ on $\mP^1\times \fm$
is described by multiplications of the following matrices
by Lemma \ref{lem:fixLm}:
\begin{itemize}
\item $G_m:$
{\scriptsize{$\begin{pmatrix} 1 & 0\\ 0 & 1\end{pmatrix}\times
\begin{pmatrix} 1& 0 \\
0 & a 
\end{pmatrix}$}}
with $a\in G_m$,
\item $G_a:$
{\scriptsize $\begin{pmatrix} 1 & 0\\ 0 & 1\end{pmatrix}\times
\begin{pmatrix} 1 & b\\
0 & 1 
\end{pmatrix}$}
with $b\in G_a$, and
\item $\mZ_2:$
\scriptsize{$\begin{pmatrix} -1& 0\\ 0 & 1\end{pmatrix}\times
\begin{pmatrix} 
1 & 0 \\
0 & 1  
\end{pmatrix}.$}

\end{itemize}
Note that members of $|(H+gL)_{|L_{\fm}}|$
corresponds to those of the linear system $|r_h+(g+1)r_v|$
through the point $c:=\gamma_c\cap (\mP^1\times \fm)=(1:0)\times (1:0)$.
We denote by $\Lambda$ the sublinear system consisting of such members.
A member of $\Lambda$ is the zero set of a bi-homogeneous polynomial of 
bidegree $(1,g+1)$
of the form $x'_1 f_{g+1}(y_2,y_3)+x'_2 g_{g+1}(y_2,y_3)$, where
$f_{g+1}(y_2,y_3)$ and $g_{g+1}(y_2,y_3)$ are binary $(g+1)$-forms
\begin{align*}
f_{g+1}(y_2,y_3)&=p_{g} y_2^{g}y_3+\cdots+p_i y_2^i y_3^{g+1-i}+\cdots+p_0 y_3^{g+1},\\
g_{g+1}(y_2,y_3)&=q_{g+1} y_2^{g+1}+\cdots+q_i y_2^i y_3^{g+1-i}+\cdots+q_0 y_3^{g+1}.
\end{align*}
Then the linear system $\Lambda$ can be identified with 
the projective space $\mP^{2g+2}$ with the homogeneous coordinate
$(p_0:\cdots:p_{g}:q_0:\cdots:q_{g+1})$.
A point $(p_0:\cdots: p_i:\cdots:p_{g}:q_0:\cdots:q_j:\cdots:q_{g+1})$ is mapped by the elements of the subgroups $G_m$, $G_a$, and $\mZ_2\subset \Gamma$ as above to the following points:
\begin{enumerate}[(a)]
\item $G_m:$
$(a^{g+1} p_0:\cdots:a^{g+1-i} p_i:\cdots:a p_{g}:a^{g+1} q_0:\cdots:a^{g+1-j} q_j:\cdots: q_{g+1})$,
\item $G_a:$ the point
$(p'_0:\cdots: p'_i:\cdots:p'_{g}:q'_0:\cdots:q'_j:\cdots:q'_{g+1})$ with 
\begin{align}
p'_i&=\sum_{k=i}^{g}\binom{k}{i} b^{k-i} p_{k},\label{align:p'q'}\\
q'_j&=\sum_{l=j}^{g+1}\binom{l}{j} b^{l-j} q_{l},\nonumber
\end{align}
\item $\mZ_2:$
$(-p_0:\cdots:-p_i:\cdots:-p_{g}:q_0:\cdots:q_j:\cdots:q_{g+1})$.
\end{enumerate}   

\vspace{2pt}

\noindent {\bf Step 1.} The quotient $\Lambda_1:=\Lambda/\mZ_2$ is rational.

The rationality is well-known by the description of $\mZ_2$-action as in (c).
In the following steps, it is convenient to show this more explicitly.
On the open set $\{q_{g+1}\not =0\}\subset \Lambda$, which is $\Gamma$-invariant, we may consider $q_{g+1}=1$. Then the action is 
\[
(p_0,\cdots,p_{g},q_0,\cdots,q_{g})\mapsto
(-p_0,\cdots,-p_{g},q_0,\cdots,q_{g}).
\] 
Therefore the quotient map can be written on the $\Gamma$-invariant open subset $\{p_{g}\not =0\}$ as follows:
\[
(p_0,\cdots,p_i,\cdots, p_{g},q_0,\cdots,q_{g})\mapsto
(p_0 p_{g},\cdots,p_i p_{g}, \cdots, p_{g}^2,q_0,\cdots,q_{g}).
\]
We denote by $\tC^{2g+2}$ the target $\mC^{2g+2}$ of this map and 
by $(\tilde{p}_0,\dots, \tilde{p}_{g},\tilde{q}_0,\dots,\tilde{q}_{g})$ its coordinate.
Using this presentation, we compute the quotient by the additive group $G_a$ in the next step.

\vspace{2pt}

\noindent {\bf Step 2.} The quotient $\Lambda_2:=\Lambda_1/G_a$ is rational.

Let $(\tilde{p}'_0,\dots, \tilde{p}'_{g},\tilde{q}'_0,\dots,\tilde{q}'_{g})$be the image of the point $(\tilde{p}_0,\dots, \tilde{p}_{g},\tilde{q}_0,\dots,\tilde{q}_{g})$ by the action of an element of $G_a$ as in (b).
By the choice of coordinate, it is easy to check $\tilde{p}'_i$ and $\tilde{q}'_j$ can be written by $\tilde{p}_0,\dots, \tilde{p}_{g}$ and 
$\tilde{q}_0,\dots, \tilde{q}_{g}$ respectively by the formulas
obtained from (\ref{align:p'q'}) by setting $q_{g+1}=1$ and replacing
$p'_i$, $p_{k}$, $q'_j$ and $q_{l}$ with
$\tilde{p}'_i$, $\tilde{p}_{k}$, $\tilde{q}'_j$ and $\tilde{q}_{l}$.
Then note that we have $\tilde{q}'_{g}=\tilde{q}_{g}+(g+1)b$.
Therefore, 
the stabilizer group of every point is trivial and
every $G_a$-orbit intersects the closed set $\{\tilde{q}_{g}=0\}$ at a single point. 
Hence we may identified birationally the quotient $\tC^{2g+2}/G_a$ with
the closed set $\{\tilde{q}_{g}=0\}\subset \tC^{2g+2}$. In particular, the quotient is rational.

\vspace{2pt}

\noindent {\bf Step 3.} The quotient $\Lambda_3:=\Lambda_2/G_m$ is rational.

We may consider the closed set 
$\{\tilde{q}_{g}=0\}$ as 
the affine space $\mC^{2g+1}$ with the coordinate
$(\tilde{p}_0,\dots,\tilde{p}_{g},\tilde{q}_0,\dots,\tilde{q}_{g-1})$.
Note that this closed set has the naturally induced $G_m$-action
such that, by the element of $G_m$ as in (a),
a point
$(\tilde{p}_0,\dots,\tilde{p}_{g},\tilde{q}_0,\dots,\tilde{q}_{g-1})$
is mapped to
$(a^{g+2} \tilde{p}_0,\dots,a^2 \tilde{p}_{g},a^{g+1}\tilde{q}_0,\dots,a^2\tilde{q}_{g-1})$.
Therefore the quotient $\mC^{2g+1}/G_m$ is a weighted projective space, hence is rational.

 \end{proof}

\end{document}